\documentclass[psamsfonts]{amsart}

\usepackage[english]{babel}

\usepackage{amssymb}
\usepackage{mathrsfs}
\usepackage{hyperref}
\usepackage{xcolor}
\usepackage{eucal}

\usepackage{amssymb}
\usepackage{mathabx}
\usepackage{amsmath}
\usepackage{amsthm}
\usepackage{imakeidx}
\usepackage{breqn}

\usepackage{diagbox}

\usepackage{verbatim}

\usepackage{fancyhdr}

\usepackage{tikz-cd}

\usepackage[utf8]{inputenc}

\usepackage{graphicx}

\usepackage{xcolor}

\usepackage{stmaryrd}

\usepackage[style=alphabetic, url=false, doi=false, backend=bibtex]{biblatex}

\theoremstyle{definition}

\newtheorem{mydef}{Definition}[section]

\newtheorem{myque}[mydef]{Question}

\theoremstyle{remark}

\newtheorem{mybem}[mydef]{Remark}

\theoremstyle{plain}

\newtheorem{mycol}[mydef]{Corollary}
\newtheorem{mysen}[mydef]{Theorem}
\newtheorem{mylem}[mydef]{Lemma}
\newtheorem{mypro}[mydef]{Proposition}

\newtheorem*{myclaim}{Claim}

\numberwithin{mydef}{section}

\DeclareMathOperator{\topo}{top}

\DeclareMathOperator{\cof}{cof}
\DeclareMathOperator{\dom}{dom}

\DeclareMathOperator{\im}{im}

\DeclareMathOperator{\otp}{otp}
\DeclareMathOperator{\cf}{cf}

\DeclareMathOperator{\Add}{Add}

\DeclareMathOperator{\Coll}{Coll}

\DeclareMathOperator{\acc}{acc}

\DeclareMathOperator{\CH}{\mathsf{CH}}

\DeclareMathOperator{\RCS}{RCS}

\DeclareMathOperator{\Hull}{Hull}

\newcommand{\dB}{\mathbb{B}}

\newcommand{\dJ}{\mathbb{J}}

\newcommand{\dP}{\mathbb{P}}
\newcommand{\dQ}{\mathbb{Q}}

\newcommand{\uhr}{\upharpoonright}

\newcommand{\ZFC}{\mathsf{ZFC}}

\title{A Topological Rainbow Ramsey Theorem} 

\author{Hannes Jakob}
\address{University of North Texas,
Denton, Texas, USA, 76201}
\email{hannes.jakob@unt.edu}

\author{Jing Zhang}
\address{University of North Texas,
Denton, Texas, USA, 76201}
\email{jing.zhang2@unt.edu}

\subjclass[2020]{Ramsey theory, rainbow, semiproper forcing, large cardinal, strong Chang's conjecture} 

\date{\today}

\begin{document}
	
	\maketitle
	
	\begin{abstract}
	We show that it is consistent relative to the existence of suitable large cardinals that for any countable-to-one coloring $c: [\omega_2]^2\to \omega_2$, there exists a closed subset $A\subseteq \omega_2$ of order type $\omega_1$ such that $c\restriction [A]^2$ is injective. This theorem simultaneously strengthens two theorems, one by Abraham, Cummings and Smyth and another one by Garti and Zhang, as well as answers a question raised by Garti and Zhang. New combinatorial principles involving towers of countable elementary submodels, games concerning regressive functions and variants of strong Chang's conjecture, which are key elements of the proof, are investigated.
	\end{abstract}
	
 \section{Introduction}
 
 A typical problem in rainbow Ramsey theory is the following: given a coloring $c:[\kappa]^n\rightarrow\theta$ satisfying that each color is not used ``too many times", we are asked to find a ``large" $y\subseteq\kappa$, such that $y$ is $c$-\emph{rainbow}, namely, $c\upharpoonright[y]^n$ is one-to-one. 
 The exact meaning of ``too many times" and ``large" varies depending on the context. Let us introduce a notation that is central to our discussion.
 
 \begin{mydef}\label{rainbowinitialdef}
Let $\lambda, \kappa, n$ be ordinals and $i$ be a cardinal.
We use $\lambda\to^{*} (\kappa)^n_{<i-bdd}$ to abbreviate: For any $f: [\lambda]^n \to \lambda$ that is $<i$\emph{-bounded}, namely for any $\alpha\in \lambda$, $|f^{-1}\{\alpha\}| < i$, there exists an $f$-rainbow $A\subset \lambda$ of order type $\kappa$. Let $\lambda\to^{*} (\kappa)^n_{i-bdd}$ denote $\lambda\to^{*} (\kappa)^n_{<i^+-bdd}$
\end{mydef}

Note that $\omega\to^* (\omega)^2_{2-bdd}$ is a consequence of the infinite Ramsey theorem \cite{Ramsey}, namely for any $c: [\omega]^2\to 2$, there exists an infinite $H\subseteq \omega$ and $i<2$, such that $f\restriction [H]^2= \{i\}$. The Ramsey theorem is usually denoted as $\omega\to (\omega)^2_2$. As a result, rainbow Ramsey theory is sometimes called \emph{sub-Ramsey} theory, especially in finite combinatorics. 

The rainbow variations have been widely considered in the literature. For example, they are studied in finite combinatorics \cite{MR867747, rainbowsurvey}, in computability theory, \cite{MR2583822, Wangwei}, and in combinatorics on countably infinite structures and ultrafilters on a countable set, \cite{MR3518438,MR3135506}. Our focus in this paper is the rainbow Ramsey theory in the context of infinite, often times uncountable, sets. Such a study was initiated by Galvin \cite{GalvinLetter} (cited in \cite{TodorcevicPositivePartition}), where he showed that $\CH$ implies that $\omega_1\not\to^* (\omega_1)^2_{2-bdd}$ and asked if $\omega_1\to^* (\omega_1)^2_{2-bdd}$ is consistent. Note that here the corresponding Ramsey statement is of no help since Sierpinski \cite{Sierpinski} showed that $\omega_1\not\to (\omega_1)^2_{2}$. In 1983, Todorcevic \cite{TodorcevicPositivePartition} showed that $\omega_1\to^* (\omega_1)^2_{2-bdd}$ is indeed consistent relative to the consistency of $\ZFC$. In fact, in his model, the following strengthening is true: for any $c: [\omega_1]^2\to \omega_1$ such that each $\alpha\in \omega_1$, $c^{-1}[\{\alpha\}]$ is finite, there exists a partition $f: \omega_1\to \omega$ such that for each $i\in \omega$, $f^{-1}[\{i\}]$ is a $c$-rainbow subset of $\omega_1$. The strengthening is two-fold: 1) the boundedness condition of the coloring is more relaxed and 2) the conclusion is stronger, namely the rainbow set we can find is larger (for example, it can be stationary in $\omega_1$).

Let us explain another key word, ``topological", in the title. Ramsey-type problems involving topological spaces take the following form: given a topological space $X$, and a coloring $c: [X]^n \to \theta$, we want to find a ``large" subspace $Y$ satisfying certain topological constraints such that $c\restriction [Y]^n$ is ``small". These topological Ramsey problems have been considered by Baumgartner, Galvin, Van Douwen (see \cite{BaumgartnerTopological}) inspired by earlier theorems of Galvin and Devlin \cite{Devlin} regarding the big Ramsey degree of the rational numbers as a linear order. Recent advancements include a proof of a conjecture of Galvin, by Raghavan and Todorcevic \cite{GalvinConjecture} using large cardinals, and later by Inamdar \cite{inamdar2024ramseytheoremreals} in $\ZFC$, that states that for any finite coloring $c$ on pairs of $\mathbb{R}$, there is a subset $X$ homeomorphic to $\mathbb{Q}$ such that $c\restriction [X]^2$ has size at most $2$.

Let us discuss another family of topological Ramsey theorems that are closer to our discussion in this paper. 
We first introduce some notations for a more compact presentation. For linear orders $\phi, \xi$, any cardinal $\lambda$ and natural number $n\in \omega$, we let $\phi \to (\xi)^n_{\lambda}$ abbreviate that for any $c: [\phi]^n\to \lambda$, there is a subset $H\subseteq \phi$ isomorphic to $\xi$ as linear orders such that $c\uhr [H]^n$ is constant. The Baumgartner-Hajnal \cite{BaumgartnerHajnal} theorem states that 
for any linear order $\varphi$ satisfying $\varphi\to (\omega)^1_\omega$, for any $\alpha<\omega_1$ and $k\in \omega$, $\varphi\to (\alpha)^2_k$. Laver and Weiss \cite{Weiss} asked if the topological version of the Baumgartner-Hajnal theorem is true when $\varphi$ is taken to be $\omega_1$ with the order topology and $\mathbb{R}$ with the usual topology respectively. Schipperus \cite{Schipperus} answers both questions positively by showing that let $X$ be either $\omega_1$ or $\mathbb{R}$, then for any $c: [X]^2 \to k\in \omega$ and any $\alpha<\omega_1$, there exists a \emph{closed} subset $H\subseteq X$ of order type $\alpha+1$ such that $c\uhr [H]^2$ is constant. In other words, such $H$ is homeomorphic to $\alpha+1$ with the order topology. The next definition, which is central to this paper, combines the two variations we discussed above.

\begin{mydef}\label{defintion: topologicalrainbowRamsey}
Let $X, Y$ be topological spaces, $\kappa$ be a cardinal and $n\in \omega$. We let $X\to^* (\topo Y)^n_{<\kappa-bdd}$ abbreviate the following statement: for any $<\kappa$-bounded coloring $c: [X]^n \to X$, there exists $H\subseteq X$ homeomorphic to $Y$ such that $c\uhr [H]^n$ is injective.
\end{mydef}

When $X$, $Y$ are ordinals with the order topology, we can describe the partition relations above, as well as some variants, in a more combinatorial way: 

\begin{mydef}[Definition 0.2, \cite{GartiZhang}]
Let $\lambda$, $\alpha$ be ordinals and $\kappa$ be a cardinal. 
\begin{itemize}
\item $\lambda\to^{*} (\alpha-cl)^n_{<\kappa-bdd}$ abbreviates: for any $<\kappa$-bounded coloring $c: [\lambda]^n\to \lambda$, there exists a \emph{closed} $c$-rainbow subset of order type $\alpha$. Namely, $\lambda\to^{*} (\topo \alpha)^n_{<\kappa-bdd}$
\item Similarly, $\lambda\to^{*} (\alpha-st)^n_{<\kappa-bdd}$ abbreviates: for any $<\kappa$-bounded coloring $c: [\lambda]^n\to \lambda$, there exists a $c$-rainbow subset $A$ of order type $\alpha$ that is \emph{stationary in $\sup A$}. 

\end{itemize}
\end{mydef}

The first topological rainbow Ramsey theorem was proved by Todorcevic \cite{TodorcevicPositivePartition} where he showed, via an absoluteness argument, that in ZFC, $\omega_1\to^* (\alpha-cl)_{<\omega-bdd}$ for any $\alpha<\omega_1$. Notice that this theorem, proved before 1983, precedes the topological Baumgartner-Hajnal theorem proved by 
Schipperus \cite{Schipperus} in the 2000's. The next rainbow Ramsey topological theorem is due to Abraham, Cummings and Smyth \cite{AbCumSmythePolychromatic} where they showed it is consistent relative to the existence of suitable large cardinals that $\omega_2\to^*(\omega_1-cl)^2_{<\omega-bdd}$ holds. Furthermore, they deduce it from a well-known forcing axiom, Martin's Maximum \cite{ForemanMagidorShelahMM}.
Later, Garti and Zhang relaxed the boundedness requirement and showed that it is consistent relative to existence of suitable large cardinals that $\omega_2\to^* (\omega_1-st)^2_{\omega-bdd}$. There is already a notable contrast with the corresponding Ramsey theorem: By \cite{Steppingup}, for any regular $\kappa\geq \omega_1$, it is true that $\kappa\not\to [\omega_1-st]^2_\omega$, namely, there is a coloring $c: [\kappa]^2\to \omega$ such that for any $A\subseteq \kappa$ of order type $\omega_1$ that is stationary in $\sup A$, we have that $c''[A]^2 =\omega$. A natural question is whether it is consistent to have a joint extension of the two theorems mentioned above: 

\begin{myque}[Question 5.5, \cite{GartiZhang}]
		Is $\omega_2\to^*(\omega_1-cl)_{\omega\text{-bdd}}^2$ consistent?
	\end{myque}
	
The main result of this paper is a positive answer to this question. 

\begin{mysen}\label{theorem: main}
It is consistent relative to the existence of a supercompact cardinal that $\omega_2\to^* (\omega_1-cl)^2_{\omega-bdd}$ holds.
\end{mysen}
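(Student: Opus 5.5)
We will force over a model with a supercompact cardinal $\kappa$ by a semiproper (revised countable support) iteration of length $\kappa$. This makes $\kappa=\omega_2$ and yields a strong form of Chang's conjecture, a ``tower'' version of strong Chang's conjecture. The structure of the argument follows Abraham--Cummings--Smyth, who obtain closed rainbow sets for $<\omega$-bounded colorings from MM-type reflection, and Garti--Zhang, who handle $\omega$-bounded colorings but only obtain stationary sets.

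The combinatorial target is as follows. Fix an $\omega$-bounded $c:[\omega_2]^2\to\omega_2$ and a large regular $\theta$. We want a continuous $\in$-chain $\langle M_i: i<\omega_1\rangle$ of countable elementary submodels of $H(\theta)$ containing $c$, with the following property: letting $\delta_i=\sup(M_i\cap\omega_2)$, each new ordinal $\delta_i$ avoids the colors already realized below it. More precisely, we want $\delta_i\notin M_i$, and for $j<i$ the color $c(\delta_j,\delta_i)$ must differ from all $c(\delta_k,\delta_l)$ with $k<l<i$ and from all $c(\delta_k,\delta_i)$ with $k<j$. The set $A=\{\delta_i:i<\omega_1\}$ is then automatically closed of order type $\omega_1$, since the chain is continuous and sups commute with unions. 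The task is therefore to realize rainbowness of $A$.

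The key observation is this. Since $c$ is countable-to-one and $c\in M_i$, every color $\gamma\in M_i$ has $c^{-1}\{\gamma\}\subseteq M_i$. Consequently, if $\delta_i$ lies outside $M_i$ but the lower points $\delta_j$ belong to $M_i$, then $c(\delta_j,\delta_i)\notin M_i$. Old colors, which lie in $M_i$, therefore cannot recur. The trouble is that $\delta_j\in M_i$ forces $M_i\cap\omega_2$ to be non-initial-segment-like, so we need $\delta_i$ chosen ``outside'' while the pattern of earlier sups is captured inside. This requires end-extension-like behavior below $\omega_2$ together with the strong Chang property. In the forcing extension we will establish a principle of the following form: for every $c$, the set of towers $\langle M_i\rangle$ such that for each $i$ there is $N\supseteq M_i$, $N\prec H(\theta)$, with $N\cap\omega_1=M_i\cap\omega_1$, containing $\{\delta_j:j<i\}$ and with $\sup(N\cap\omega_2)<\delta_i$, is stationary.

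Among the new colors, distinct pairs $(\delta_j,\delta_i)$ and $(\delta_{j'},\delta_i)$ can collide, and these collisions must be avoided as well. Here a game with regressive functions enters. For a fixed $i$ and potential top points $\delta$, the map $j\mapsto$ (least $k$ with a color conflict) is regressive. Using a Fodor-type/game principle secured by the iteration, we will thin out, obtaining the rainbow subset after passing to a closed unbounded sub-tower. Closed subsequences of a continuous tower remain closed.

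The main obstacle is preserving closedness while thinning. Fodor-type shrinking naturally yields stationary sets, which is exactly where Garti--Zhang stopped. To handle this, I would formulate a game of length $\omega_1$ in which the opponent plays regressive functions and we play the next model. We would then show that the iteration, forcing at stage $\alpha$ with a semiproper poset that adds a winning run for a given name, makes player II win, so the tower can be built with all choices made at limit stages consistent. Verifying that these posets are semiproper, which is needed so that the RCS iteration preserves $\omega_1$ and collapses correctly, is the delicate step. I would try to deduce it from reflection at the supercompact via a Laver function.
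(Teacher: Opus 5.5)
Your proposal has genuine gaps, and the heart of the argument is missing. First, the half you work out is automatic, and the principle you state for it is contradictory. For an $\in$-chain containing $c$ you have $\delta_j\in M_{j+1}\subseteq M_i$ for $j<i$ and $\delta_i\notin M_i$. Your own observation therefore already shows that every $c(\delta_j,\delta_i)$ lies outside $M_i$, while all earlier colors lie in $M_i$. For a normal coloring, which one may assume, this is trivial outright. The principle as written cannot hold: $N\supseteq M_i$ forces $\sup(N\cap\omega_2)\geq\sup(M_i\cap\omega_2)=\delta_i$, so the set of towers you want to be stationary is empty.

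The whole problem is the remaining requirement that $c(\cdot,\delta_i)$ be injective on $\{\delta_j:j<i\}$ for every $i$. At limit $i$ the point $\delta_i$ is dictated by continuity, so injectivity there must be arranged before $i$ is reached. Your treatment of this is not an argument:
\begin{itemize}
\item For fixed $i$, the map $j\mapsto$ (least conflicting $k$) is regressive on the countable ordinal $i$, where pressing down yields nothing.
\item If you instead meant to press down over candidate tops $\delta\in\omega_2$, that helps only at successor stages, where you get to choose $\delta_i$, not at limits.
\item Thinning a given tower to a club sub-tower amounts to finding a rainbow club for the induced $\omega$-bounded coloring $(j,i)\mapsto c(\delta_j,\delta_i)$ on $[\omega_1]^2$. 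This is no easier than the original problem, and, as you note yourself, Fodor-type arguments only give stationary sets.
\item The length-$\omega_1$ game, the ``semiproper poset that adds a winning run'', and the proof that this poset is semiproper are all left unspecified. That proof is exactly where the content of the theorem lies.
\end{itemize}

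The missing idea is that the paper never builds the closed set from a tower. It first adds a rainbow set generically and only then makes it closed.
\begin{enumerate}
\item The countably closed side-condition poset $\dP_f$ adds a partition of $\omega_2$ into $\omega_1$ many $f$-rainbow sets, giving a stationary rainbow $A\subseteq E^{\omega_2}_\omega$. Its preservation of $\omega_2$ uses the amalgamation lemma via $F(X)$, $\CH$, and player II winning the \emph{countable-length} games $\Game^\alpha(I_{\mathcal{X}},\mathcal{X})$.
\item The Foreman--Magidor--Shelah poset $\dP(A)$ then shoots a closed copy of $\omega_1$ into $A$. Closedness costs nothing, because subsets of rainbow sets are rainbow.
\item The hard work is showing that $\dP_f*\dP(\dot A)$ is $(\alpha,\omega_1)$-semiproper for every $\alpha<\omega_1$. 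This goes via preservation of stationary subsets of $P^\alpha_{\omega_1}(\omega_1)$ and the $\alpha$-$(\dagger)$ lemma at the supercompact.
\item Such posets are iterated in an $\RCS$ iteration along a Laver function. It is tower-semiproperness, not mere semiproperness, that makes $j(\dB_\kappa)/\dB_\kappa$ $(\alpha,\kappa)$-semiproper. That in turn yields an ideal on which II wins $\Game^\alpha(I)$ in the final model, while subcompleteness preserves $\CH$.
\item Finally, the closed rainbow set is reflected from $M[H]$ back to $V[G]$ by elementarity of the lifted embedding.
\end{enumerate}
None of these ingredients appears in your plan.
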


The large cardinal hypothesis we use is most likely an overkill but the conclusion does have non-trivial large cardinal strength (for example, by \cite[Remark 3.2]{GartiZhang}, it implies the consistency of a Mahlo cardinal). However, since we are not able to obtain an equiconsistency result, we do not attempt to optimize the large cardinal upper bound. See Section \ref{section: questions} for further discussions.

The methods employed in the previous theorems \cite{AbCumSmythePolychromatic, GartiZhang} depend directly on the forcing in \cite{TodorcevicPositivePartition}, which is applicable to $<\omega$-bounded colorings on $\omega_1$. The method in \cite{AbCumSmythePolychromatic} falls short of dealing with colorings that are $\omega$-bounded and the method in \cite{GartiZhang} relied on a stationary version of Chang's conjecture, which does not seem to be compatible with adding club subsets to a rainbow set in an iterable fashion.

One key new idea in our solution is to isolate a combinatorial condition, phrased game theoretically, such that the higher version of Todorcevic's forcing from \cite{TodorcevicPositivePartition} behaves nicely, such as preserving the first two uncountable cardinals. It is not clear whether our method can be used to show that $\omega_2\to^*(\omega_2)_{2-bdd}^2$ is consistent, as asked in \cite{AbCumSmythePolychromatic}. The main challenge comes from the need to iterate such a forcing many times preserving cardinals.

The new insights also allow us to improve the main theorem in \cite{GartiZhang} by reducing the large cardinal upper bound considerably (previously a huge cardinal was used): 

\begin{mysen}\label{theorem: main2}
It is consistent relative to the existence of a regular cardinal that is a stationary limit of weakly compact cardinals that $\omega_2\to^* (\omega_1-st)^2_{\omega-bdd}$ holds.
\end{mysen}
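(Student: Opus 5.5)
The plan is to collapse a regular cardinal $\kappa$ that is a stationary limit of weakly compact cardinals to $\omega_2$ with a Levy-type collapse, and to show that in the extension every $\omega$-bounded coloring $c:[\omega_2]^2\to\omega_2$ has a rainbow set of order type $\omega_1$ that is stationary in its supremum. Concretely, I would force with $\Coll(\omega_1,<\kappa)$. Then $\kappa=\omega_2$ and $\CH$ holds, so the ground-model weakly compact cardinals $\lambda<\kappa$ become ordinals of cofinality $\omega_1$. For a name $\dot c$ for an $\omega$-bounded coloring, a closing-off argument gives a club $C\subseteq\kappa$ of $\lambda$ such that $\dot c\restriction[\lambda]^2$ is a $\Coll(\omega_1,<\lambda)$-name and $\lambda$ is closed under $\dot c$. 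Since the weakly compacts below $\kappa$ are stationary, we can fix a weakly compact $\lambda\in C$.

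The central step is a reflection argument at this $\lambda$. In $V[G_\lambda]$, where $\lambda=\omega_2$, consider a countable-support style Todorcevic forcing $\dP$: its conditions are countable $c$-rainbow subsets $p\subseteq\lambda$, ordered by end-extension, with the requirement that the generic produces an $\omega_1$-sequence whose closure points are cofinal in a stationary subset of its supremum. I would use the game-theoretic combinatorial condition that the introduction promises, a game concerning regressive functions and towers of countable elementary submodels, to show that $\dP$ is proper or semiproper enough to preserve $\omega_1$. Weak compactness of $\lambda$ is used to show that $\dP$ is absorbed by the quotient $\Coll(\omega_1,[\lambda,\kappa))$, or at least that there is a condition below which the quotient adds a $\dP$-generic. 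This gives a rainbow $A\subseteq\lambda$ of order type $\omega_1$ in $V[G_\kappa]$.

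Stationarity of $A$ in $\sup A=\lambda$ should come from the following fact. Clubs in $\lambda$ in the final model contain, for the generic tower $\langle N_i:i<\omega_1\rangle$ of countable elementary submodels, all $\sup(N_i\cap\lambda)$ for $i$ in a club of $\omega_1$. So if $A$ is chosen to meet the tower's suprema on a stationary set of indices, it is stationary in $\lambda$.

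The key combinatorial lemma is that $\omega$-boundedness lets us extend a countable rainbow set $p\in N$ by an ordinal $\delta$ above $\sup(N\cap\lambda)$ while avoiding conflicts. Each new color $c(\{\alpha,\delta\})$ for $\alpha\in p$ is attained only countably often. Using weak compactness, applied through a $\Pi^1_1$-reflection or elementary embedding $j$ with critical point $\lambda$, one finds $\delta$ for which no new color repeats an old one and new colors are pairwise distinct. Pressing down on $\delta\mapsto$ the countable set of $\beta$ sharing a color with some $\{\alpha,\delta\}$ is exactly where the regressive-function game enters.

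I expect the main obstacle to be the preservation argument: showing that the quotient forcing (or $\dP$) does not collapse $\omega_1$ and that the required genericity holds over the stationary set of tower indices. I would first try to verify that player II has a winning strategy in the regressive-function game at weakly compact $\lambda$ after $\Coll(\omega_1,<\lambda)$, and then derive semiproperness of $\dP$ from it.
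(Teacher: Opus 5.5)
\textbf{There is a genuine gap, and it sits exactly at your ``key combinatorial lemma.''}

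\emph{The obstacle you single out is not the real one.} As you describe it, $\dP$ consists of countable $c$-rainbow sets ordered by end-extension, with generic of order type $\omega_1$ cofinal in $\lambda$. Such a $\dP$ is countably closed. So preservation of $\omega_1$ is trivial. Absorption into the quotient of $\Coll(\omega_1,<\kappa)$ needs only countable closure and $|\dP|<\kappa$; weak compactness plays no role there.

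\emph{The real content is in the density arguments, and they fail.}
\begin{enumerate}
\item A single rainbow set can get stuck. Take the normal $2$-bounded coloring with $c(\{0,\delta\})=c(\{1,\delta\})$ for all $\delta\geq 2$ and all other colors distinct. Then the condition $\{0,1\}$ has no proper end-extension. So the claim that every countable rainbow $p\in N$ can be extended by some $\delta>\sup(N\cap\lambda)$ is false in $\ZFC$, with or without weak compactness. Restricting to ``extendable'' conditions only moves the problem to limit stages.
\item Stationarity of $A$ in $\lambda$ needs more than points above $\sup(N\cap\lambda)$. When you meet a name $\dot C$ for a club, you must add the \emph{limit} $\delta=\sup_n\max p_n$ of an $\omega$-construction to $\bigcup_n p_n$. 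That requires $c(\cdot,\delta)$ to be injective on a set built before $\delta$ was determined. Adding points above $\sup(N\cap\lambda)$ does not help. Saying ``choose $A$ to meet the tower's suprema on a stationary set of indices'' restates the goal rather than proving it.
\end{enumerate}

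\textbf{How the paper avoids this.} It keeps $\lambda=\omega_2$ throughout the key step.
\begin{itemize}
\item In $V[G\uhr\lambda]$ it forces with the Abraham--Cummings--Smyth side-condition poset $\dP_f$. Conditions are pairs $(\mathcal{M},l)$, where $l$ is a countable partial map $\lambda\to\omega_1$ with rainbow fibers. Any two points with the same $l$-value are separated by a model in the $\in$-chain $\mathcal{M}$ of countably closed $\omega_1$-sized models.
\item Partitioning into $\omega_1$ rainbow pieces, following Todorcevic, makes extension trivial: a new point gets a fresh color. Countable closure is immediate.
\item The hard part is that $\dP_f$ preserves $\omega_2$. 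Proposition~\ref{Proposition: Properness} shows that conditions with $M\cap H(\omega_4)\in\mathcal{M}$ are $(M,\dP_f)$-generic. This uses Lemma~\ref{Lemma: Refining Sequence}: player I plays the regressive functions $\eta\mapsto\sup_{i<\delta^*}(\eta\cap F(k_\beta^{-1}[\{i\}]\cup l_\eta^{-1}[\{i\}]))$, and II's answers allow amalgamation via Lemma~\ref{Lemma: Amalgamating Rainbow Sets}.
\item Weak compactness of $\lambda$ is used only to give II a winning strategy in $\Game^{\alpha}(I_{\text{bdd}}^{\cof(\omega_1)},\mathcal{X})$ after $\Coll(\omega_1,<\lambda)$ (Theorem~\ref{Theorem: Ideal Game Upper Bound}).
\item Stationarity then comes for free. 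Since $\lambda$ is still $\omega_2$, one of the $\omega_1$ fibers is stationary in $\lambda\cap\cof(\omega)$. The countably closed remainder of $\Coll(\omega_1,<\kappa)$ preserves this. Intersecting with a club of order type $\omega_1$ finishes.
\end{itemize}

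\textbf{What matches and what is missing.} Your reflection to a weakly compact $\lambda$ in a club matches the paper. So does your plan to get II's winning strategy from weak compactness. What is missing is a forcing in which that strategy actually does the work. In the paper that is $\omega_2$-preservation of a partition-into-$\omega_1$-rainbow-sets forcing. In your setup it would be placing limit points into a single rainbow set, which the proposal does not address.
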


The organization of the paper is as follows: 
	\begin{enumerate}
	\item Section \ref{section: limitations} collects a few limiting results putting constraints on the ground model in order for a nice forcing to add a rainbow subset with respect to a coloring in the ground model to exist. 
	\item Section \ref{Section: Adding Rainbow Sets} contains the description of the forcing that adds a partition of $\omega_2$ into $\omega_1$ many rainbow subsets for a given $\omega$-bounded coloring on $\omega_2$ and the proof that it preserves $\omega_1$ and $\omega_2$ conditioned on a game-theoretic hypothesis.
	\item Section \ref{section: semiproper}  discusses variants of semiproperness and how ideals whose quotients satisfy these properties imply our game-theoretic hypothesis.
	\item Section \ref{section: dagger} generalizes the classical $(\dagger)$ principle to the context involving countable towers.
	\item Section \ref{Section: Weak Ideal Game} establishes the equiconsistency between the local version of our game-theoretic hypothesis and a weakly compact cardinal.
	\item Section \ref{section: adding clubs} investigates the forcing that shoots a club into a stationary subset of $\omega_2\cap \cof(\omega)$, including the proof that such forcing is $<\omega_1$-stationary set preserving and if a suitable variant of the strong Chang's conjecture holds, then it is $<\omega_1$-semiproper.
	\item Section \ref{section: maintheorem} contains the proof of Theorem \ref{theorem: main} and Theorem \ref{theorem: main2}.
	\item Section \ref{section: questions} concludes with some open questions.
	\end{enumerate}
	
We finish the introduction by fixing some notations and including a useful fact.
For $x,y$ either ordinals or sets of ordinals, we write $x<y$ if every element of $x$ (or $x$ itself) is smaller than any element of $y$ (or $y$ itself).

\begin{mydef}
		Let $\mu$ be a cardinal and $f\colon[\mu]^2\to\mu$ be a coloring. We say that $f$ is \emph{normal} if for all $a,b\in[\mu]^2$, if $f(a)=f(b)$, then $\max(a)=\max(b)$.
	\end{mydef}
	
	One useful lemma we need is: 
	
	\begin{mylem}[Lemma 1, \cite{AbCumSmythePolychromatic}]\label{Lemma: Decomposition into Normal}
		Let $\mu$ be an infinite cardinal and let $f\colon[\mu^+]^2\to\mu^+$ be a $\mu$-bounded coloring. Then there exists a club $C\subseteq \mu^+$ such that $f\uhr [C]^2$ is normal.
	\end{mylem}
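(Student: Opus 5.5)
Since $f$ is $\mu$-bounded, every fibre $f^{-1}[\{\gamma\}]$ has size at most $\mu$. The plan is a closure argument: $C$ will be the set of closure points of a function that records, for each small set of colors, where the pairs carrying those colors live. Concretely, for $\gamma<\mu^+$ let
\[
g(\gamma)=\sup\Bigl(\bigcup\{a : a\in f^{-1}[\{\gamma\}]\}\Bigr)+1 .
\]
This is an ordinal below $\mu^+$, because it is a supremum of at most $\mu$ ordinals below the regular cardinal $\mu^+$.

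Next take a continuous elementary chain $\langle M_\xi:\xi<\mu^+\rangle$ of submodels of $H(\theta)$ containing $f$. Each $M_\xi$ has size $\mu$ and $M_\xi\cap\mu^+\in\mu^+$. Then
\[
C=\{M_\xi\cap\mu^+ : \xi<\mu^+\}
\]
is a club in $\mu^+$. Equivalently, one can use the club of $\delta$ closed under $g$ and under $(\alpha,\beta)\mapsto f(\{\alpha,\beta\})$. For each $\delta\in C$ we have two closure properties:
\begin{itemize}
\item if $\alpha<\beta<\delta$, then $f(\{\alpha,\beta\})<\delta$;
\item if $\gamma<\delta$, then every pair $a$ with $f(a)=\gamma$ satisfies $a\subseteq\delta$.
\end{itemize}

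Now suppose $a=\{\alpha_0<\beta_0\}$ and $b=\{\alpha_1<\beta_1\}$ lie in $[C]^2$ with $f(a)=f(b)=\gamma$, and assume for a contradiction that $\beta_0<\beta_1$. Since $\beta_0\in C$ and $\beta_0<\beta_1$, there are two cases.
\begin{itemize}
\item If $\gamma<\beta_1$: the element $\beta_0$ of $C$ is not enough, so use $\beta_1$ itself. We have $\gamma<\beta_1$ and $a$ has color $\gamma$, but that only gives $a\subseteq\beta_1$, which is already true. So the argument has to go through $b$ instead: $b$ has color $\gamma$, and if $\gamma<\delta$ for some $\delta\in C$ with $\alpha_1<\delta\le\beta_1$, the second property forces $b\subseteq\delta$. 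Taking $\delta=\beta_1$ would give $\beta_1<\beta_1$, a contradiction. So this case is impossible.
\item If $\gamma\ge\beta_1$: then $\gamma\ge\beta_1>\beta_0$. This is the main obstacle, because the closure properties do not obviously rule it out.
\end{itemize}

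To handle the second case, use instead the club of $\delta\in C$ that are also closure points for colors, namely
\[
\sup\{f(\{\alpha,\beta\}):\alpha<\beta<\delta\}\le\delta .
\]
This does not bound $f(\{\alpha_1,\beta_1\})$, since $\beta_1$ is itself the top point. So the better fix is to replace $C$ by a thinner club, for instance the limit points of $C$, or to choose $a,b$ so that the first case applies. A more robust route is to work with a single model. Let $\delta=\min(C\setminus(\beta_0+1))\le\beta_1$ and let $M$ be the model with $M\cap\mu^+=\delta$. Then $a\in M$, so $\gamma=f(a)\in M$. Since $|f^{-1}[\{\gamma\}]|\le\mu$ and $\mu\subseteq M$ (arrange this by putting $\mu+1\subseteq M_0$), we get $f^{-1}[\{\gamma\}]\subseteq M$. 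In particular $b\in M$, so $\beta_1<\delta\le\beta_1$, which is a contradiction. This argument covers both cases at once, so the case split above is unnecessary. The key requirements are $\mu\subseteq M_\xi$ and the elementarity needed to enumerate a fibre of size at most $\mu$ inside $M$. Therefore $\max(a)=\max(b)$, and $f\uhr[C]^2$ is normal.
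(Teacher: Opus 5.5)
Your final single-model argument is correct. It is exactly the standard closure-point proof behind Lemma 1 of Abraham--Cummings--Smyth, which the paper cites without reproving: $\gamma=f(a)\in M$ is closure under $f$, and $f^{-1}[\{\gamma\}]\subseteq M$ is closure under your $g$. The ``main obstacle'' in your case split is illusory, though. Since $\alpha_0<\beta_0<\beta_1$ and $\beta_1\in C$, the first closure property at $\delta=\beta_1$ already gives $\gamma=f(a)<\beta_1$. So the case $\gamma\ge\beta_1$ never arises, and your original club of closure points works verbatim with no thinning.
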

	
%
%

	We may assume for the rest of the paper that all colorings being considered are normal.
	
	\section{Limitations}\label{section: limitations}
	
	In this section, we present some restrictions on the properties of a forcing that can add a rainbow subset to a given coloring. We focus on special cases concerning $\omega_1$ and $\omega_2$ for simplicity even though our arguments generalize to other successors of regular cardinals.

	\begin{mysen}\label{theoerem: cccomega2}
		If $2^{\omega_1}=\omega_2$, then there exists a 2-bounded coloring on $\omega_2$ such that no c.c.c forcing can add a rainbow subset of size $\omega_2$. 
    \end{mysen}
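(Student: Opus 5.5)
The plan is to construct the coloring by a diagonalization of length $\omega_2$. It will be normal in the sense of the definition above, and every color will be used by exactly two pairs $\{\alpha,\beta\},\{\alpha',\beta\}$ with a common maximum $\beta$. The hypothesis $2^{\omega_1}=\omega_2$ supplies a bookkeeping: $|[\omega_2]^{\omega_1}|=\omega_2^{\omega_1}=2^{\omega_1}=\omega_2$, so we fix an enumeration $\langle X_\xi:\xi<\omega_2\rangle$ of $[\omega_2]^{\omega_1}$ in which every set appears cofinally often. At stage $\beta$ (it suffices to act at $\beta$ of cofinality $\omega_1$) we consider the at most $\omega_1$ many $\xi<\beta$ with $X_\xi\subseteq\beta$. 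Since each such $X_\xi$ has size $\omega_1$, a recursion of length $\le\omega_1$ lets us choose, for each such $\xi$, a partition of a subset of $X_\xi$ of size $\omega_1$ into $\omega_1$ many disjoint pairs $\{\alpha,\alpha'\}$, with all pairs chosen across all $\xi$ pairwise disjoint. For each chosen pair we give $\{\alpha,\beta\}$ and $\{\alpha',\beta\}$ a common fresh color. Every remaining pair $\{\gamma,\beta\}$ gets its own color. Colors are assigned injectively into $\omega_2$, so the coloring is $2$-bounded. The intended consequence is that no rainbow set containing $\beta$ can contain both halves of any chosen pair inside a listed $X_\xi\subseteq\beta$.

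Next, suppose towards a contradiction that $\mathbb{P}$ is c.c.c. and $\dot A$ is forced to be a rainbow set of size $\omega_2$. In $V$, let $S$ be the set of $\alpha$ for which some $p_\alpha$ forces $\alpha\in\dot A$, and fix such witnesses $p_\alpha$. The standard c.c.c. argument gives a condition $q$ such that every $r\le q$ is compatible with $\omega_2$ many of the $p_\alpha$. Otherwise we could build a maximal antichain of conditions each compatible with only $\le\omega_1$ many $p_\alpha$. That antichain is countable, so only $\omega_1$ many $\alpha$ remain, and this contradicts $\dot A$ having size $\omega_2$. Using this density, we will locate in $V$ a set $X=X_\xi\in[S]^{\omega_1}$ and a $\beta\in S$ of cofinality $\omega_1$ above $\xi$ and above $\sup X$, together with an $r\le q$ below $p_\beta$. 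The goal is that $r$ forces some pair $\{\alpha,\alpha'\}$ chosen for $X$ at stage $\beta$ to lie inside $\dot A$. Then $\{\alpha,\beta\}$ and $\{\alpha',\beta\}$ get the same color, so $\dot A$ is not rainbow, which is the contradiction.

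The main obstacle is this last step. Compatibility of $p_\alpha$, $p_{\alpha'}$ and $p_\beta$ pairwise does not give a common extension, and the pairing at stage $\beta$ was fixed before we know anything about $\mathbb{P}$. My first attempt will be to exploit the fact that each $X_\xi$ is split into $\omega_1$ many pairs. By the c.c.c., some condition below $p_\beta$ forces that uncountably many $\alpha\in X$ lie in $\dot A$. I would then try to refine the bookkeeping so that each $X_\xi$ is reused at unboundedly many stages, each time with a pairing that is "fine" relative to all earlier pairings, and run a $\Delta$-system/pressing-down style argument to force two elements of $\dot A\cap X$ into the same pair. If this cannot be arranged directly, the fallback is to enumerate instead the $\omega_2$ many nice names (countable antichains over $\omega_1$-sized index sets, of which there are $(\omega_1^{\omega})^{\omega_1}=\omega_2$ many) for uncountable subsets of $\beta$. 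We would diagonalize against those, choosing each pair so that the two halves are forced into the name by a common condition.
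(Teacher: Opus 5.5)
There is a genuine gap. The step you flag as the main obstacle is the heart of the theorem, and neither of your routes closes it.

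With the pairs at stage $\beta$ chosen blindly inside a ground-model set $X_\xi$, knowing that some $r\le p_\beta$ forces $\dot A\cap X$ to be uncountable gives you nothing. An uncountable set can meet each of the $\omega_1$ chosen pairs in at most one point, so no pigeonhole applies. The proposed refinement of the bookkeeping plus a $\Delta$-system or pressing-down argument is not spelled out, and an arbitrary c.c.c.\ poset offers no supports for it to act on. Even a condition forcing a whole pair into $\dot A$ may be incompatible with $p_\beta$.

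The fallback fails for two reasons. First, the coloring must be fixed before $\mathbb{P}$ is given and must defeat every c.c.c.\ poset of any size. Your count $(\omega_1^{\omega})^{\omega_1}$ presupposes a poset of size $\omega_1$, and reducing an arbitrary c.c.c.\ poset to such a one is a separate problem. The paper's next theorem does enumerate small posets and names, but only under strong properness for $\aleph_1$-sized models, which supplies that reduction. Second, even for a listed name, a pair forced in by ``a common condition'' $s$ leaves $s$ possibly incompatible with $p_\beta$, which is your original obstacle again.

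The missing idea is to let a maximal antichain do the amalgamation. Call a countable set of pairwise disjoint pairs a block. Given $\mathbb{P}$ and $\dot A$, build in $V$ blocks $A_i$, $i<\omega_1$, with $\bigcup A_i<\bigcup A_j$ for $i<j$. Along with them build maximal antichains $M_i$ such that each $r\in M_i$ forces some pair of $A_i$ into $\dot A$. To do this, keep choosing conditions incompatible with the previous ones, each forcing into $\dot A$ a fresh pair above everything used so far. By the c.c.c.\ this gives a maximal antichain after countably many steps.

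Since $2^{\omega_1}=\omega_2$, all $\omega_1$-sequences of blocks with pairwise disjoint unions can be enumerated as $\langle\mathcal A_\delta:\delta<\omega_2\rangle$, each living below its index $\delta$. Define the coloring so that at each top point $\beta$ and each $\delta\le\beta$, all pairs of \emph{one} block of $\mathcal A_\delta$ become monochromatic with $\beta$. Choose these blocks in $\omega_1$ steps, each disjoint from the countably many points already used; this keeps the coloring $2$-bounded.

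Now suppose $\langle A_i: i<\omega_1\rangle=\mathcal A_\delta$ and $p\Vdash\beta\in\dot A$ for some $\beta\ge\delta$. Let $A_i$ be the block made monochromatic at $\beta$. Then $p$ is compatible with some $r\in M_i$, and a common extension forces $\{\alpha,\alpha',\beta\}\subseteq\dot A$ with $c(\alpha,\beta)=c(\alpha',\beta)$. The coloring never needs to know $\mathbb{P}$, and no compatibility among the individual $p_\alpha$'s is ever used.
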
	
    
    \begin{proof}
		Let us call a countable set consisting of pairwise disjoint pairs a \emph{block}. Namely, $A$ is a block if $A=\{\{\alpha_i,\beta_i\}: i\in \omega\}$ and $\{\alpha_i, \beta_i\}\cap \{\alpha_j, \beta_j\}=\emptyset$ whenever $i\neq j$. 
    
		As $2^{\omega_1}=\omega_2$, we can fix an enumeration $\langle \mathcal{A}_\delta: \delta\in\omega_2-\omega_1 \rangle$ such that 
		\begin{itemize}
			\item Each $\mathcal{A}_\delta = \{A^\delta_{\gamma}: \gamma\in \omega_1\}$ is an uncountable collection of blocks consisting of ordinals less than $\delta$, and 
			\item for any $\gamma\neq \gamma'$, $(\bigcup A^\delta_{\gamma}) \cap (\bigcup A^\delta_{\gamma'})=\emptyset$.
		\end{itemize}
	
		Define a 2-bounded normal coloring $c: [\omega_2]^2\to \omega_2$ satisfying that for any $\delta\in \omega_2- \omega_1$ and for any $\delta'\leq \delta$, there is $\gamma\in \omega_1$ such that for all $\{\alpha,\beta\}\in A^{\delta'}_\gamma$, $c(\alpha,\delta)=c(\beta,\delta)$.
	 
		Let us show that such construction is possible.	At stage $\delta\in \omega_2-\omega_1$, let $f: \omega_1\to \delta+1$ be a bijection. We need to define $c(\cdot, \delta)$ satisfying the requirements imposed by $\langle \mathcal{A}_{f(i)}: i\in \omega_1\rangle$. At stage $0$, we just pick $A^{f(0)}_{0}$ and define $c(\alpha,\delta)=c(\beta,\delta)$ for all $\{\alpha,\beta\}\in A^{\delta}_{f(0)}$. Suppose we are at stage $j\in \omega_1$. By the induction hypothesis, for each $i<j$, we have chosen $\eta_i\in \omega_1$ and defined $c(\cdot, \delta)$ on $\bigcup A^{f(i)}_{\eta_i}$. Let $C=\bigcup_{i<j}\bigcup A^{f(i)}_{\eta_i}$. Since $C$ is countable and $\mathcal{A}_{f(j)}$ is an uncountable collection of pairwise disjoint blocks, we can find some $\eta_j\in \omega_1$ such that $\bigcup A^{f(j)}_{\eta_j}$ is disjoint from $C$. We can then define $c(\alpha,\delta)=c(\beta,\delta)$ for all $\{\alpha,\beta\}\in A^{f(j)}_{\eta_j}$. This completes the construction.

		Finally, let us show that $c$ is as desired.
		Suppose for the sake of contradiction that there is a c.c.c. forcing $\dP$ and a nice $\dP$-name $\dot{A}$ for a subset of $\omega_2$ such that $\Vdash_{\dP}``\dot{A}$ is a $c$-rainbow subset of size $\aleph_2$". Recursively define an increasing sequence of blocks of length $\omega_1$, $\mathcal{A}=\langle A_i: i\in \omega_1\rangle$, 
	
		\begin{itemize}
			\item for $i<j\in \omega_1$, $\sup \bigcup A_i < \min \bigcup A_j$,
			\item for each $i\in \omega_1$, there exists a maximal antichain $M_i \subseteq \dP$ such that for each $r\in M_i$, there is $\{\alpha,\beta\}\in A_i$ such that $r\Vdash_{\dP} \{\alpha,\beta\}\subseteq \dot{A}$.
		\end{itemize}
	
		The construction is straightforward since $\dP$ is c.c.c. There is some $\delta\in \omega_2$ large enough such that $\{A_i: i\in \omega_1\}$ is enumerated as $\mathcal{A}_\delta$.	Find $p\in \dP$ and some $\delta'\geq \delta$ such that $p\Vdash \delta'\in \dot{A}$. By the construction of $c$, there is $i\in \omega_1$ such that for all $\{\alpha,\beta\}\in A_i$, $c(\alpha,\delta')=c(\beta,\delta')$. Let $M_i\subseteq \dP$ be the maximal antichain corresponding to $A_i$. Find $q\in M_i$ such that $q$ is compatible with $p$. In particular, there is $\{\alpha,\beta\}\in A_i$ such that $q\Vdash \{\alpha,\beta\}\subseteq \dot{A}$. A common extension of $p$ and $q$ in $\dP$ forces that $\{\alpha,\beta, \delta'\}\subseteq\dot{A}$ and $c(\alpha,\delta')=c(\beta,\delta')$, which is a contradiction to the fact that $\dot{A}$ is forced to be $c$-rainbow.
	
	\end{proof}

    \begin{mydef}
		A forcing $\dP$ is strongly proper for a class of models $\mathcal{B}$ if for all $M \in \mathcal{B}$, for any $p\in M\cap \dP$, there exists $q\leq p$ that is \emph{strongly} $(M,\dP)$-generic, which means: for any $q'\leq q$, there exists a condition $q'\restriction M \in M$ such that any $t\leq q'\restriction M$ with $t\in M$ is compatible with $q'$.
	\end{mydef}

	\begin{mysen}
		Assume $2^{\omega_1}=\omega_2$. There exists a 2-bounded coloring $c: [\omega_2]^2\to \omega_2$ such that whenever $\dP$ is strongly proper for a stationary set $S\subseteq [H(\lambda)]^{\aleph_1}$, where $\lambda= (2^{|\dP|})^+$, $\dP$ does not add a $c$-rainbow subset of size $\aleph_2$.
	\end{mysen}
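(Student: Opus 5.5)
We reuse the coloring constructed in the proof of Theorem \ref{theoerem: cccomega2}. Strong properness will take over the role that the maximal antichains played in the c.c.c.\ argument. Fix the enumeration $\langle \mathcal{A}_\delta: \delta\in\omega_2-\omega_1\rangle$ of uncountable families of pairwise disjoint blocks, with blocks of $\mathcal{A}_\delta$ consisting of ordinals below $\delta$. We arrange, using $2^{\omega_1}=\omega_2$, that every such family is enumerated at some index. Then define the $2$-bounded normal coloring $c$ exactly as before. For every $\delta'\geq\omega_1$ and every $\delta''\leq\delta'$, there is then a block $A\in\mathcal{A}_{\delta''}$ such that $c(\alpha,\delta')=c(\beta,\delta')$ for all $\{\alpha,\beta\}\in A$. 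Both $c$ and the enumeration are definable from parameters in $H(\lambda)$, so they lie in any sufficiently elementary model containing them. Shrinking $S$ (it remains stationary after intersecting with a club), we may assume every $M\in S$ is an elementary submodel of $(H(\lambda),\in,c,\langle\mathcal{A}_\delta\rangle,\dP)$ of size $\aleph_1$.

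Suppose toward a contradiction that $\dot A$ is a $\dP$-name and $p\in\dP$ forces that $\dot A$ is a $c$-rainbow set of size $\aleph_2$. Choose $M\in S$ with $p,\dot A\in M$. Let $\delta=\sup(M\cap\omega_2)$, which is below $\omega_2$ since $|M|=\aleph_1$. Take $q\leq p$ strongly $(M,\dP)$-generic. Since $\dot A$ is forced to be unbounded in $\omega_2$, find $q'\leq q$ and $\delta'\geq\delta$ with $q'\Vdash\delta'\in\dot A$. Let $r=q'\restriction M\in M$. Then every $t\leq r$ with $t\in M$ is compatible with $q'$.

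Now work inside $M$. In $H(\lambda)$, $r$ forces that $\dot A$ is a subset of $\omega_2$ of size $\aleph_2$, hence we can recursively construct $\omega_1$ many blocks $A_i$ ($i<\omega_1$), each strictly above all previously used ordinals. For each pair $\{\alpha,\beta\}\in A_i$ we choose a condition $t\leq r$ forcing $\{\alpha,\beta\}\subseteq\dot A$. This is done by repeatedly extending $r$ to decide two new elements of $\dot A$ above a given bound, which is always possible since the size of $\dot A$ is forced to be $\aleph_2$. By elementarity, such a family $\{A_i:i<\omega_1\}$, together with the witnessing conditions, exists in $M$. Its blocks are uncountably many and pairwise disjoint, so by elementarity again there is $\delta''\in M\cap\omega_2$ with $\mathcal{A}_{\delta''}=\{A_i:i<\omega_1\}$. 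Then $\delta''<\delta\leq\delta'$, so by the construction of $c$ some block $A_i$ is monochromatic at $\delta'$ in the above sense.

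Pick any pair $\{\alpha,\beta\}\in A_i$ and its witnessing condition $t\in M$ with $t\leq r$. Strong genericity gives a common extension of $t$ and $q'$. That extension forces $\{\alpha,\beta,\delta'\}\subseteq\dot A$, while $c(\alpha,\delta')=c(\beta,\delta')$ holds in the ground model. This contradicts the assumption that $\dot A$ is forced to be rainbow.

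The main point needing care is the bookkeeping step: the family built in $M$ must be enumerated at an index $\delta''$ that lies in $M$, and hence below $\delta'$. This is why we require the enumeration to list every uncountable family of pairwise disjoint blocks, and why we use elementarity of $M$ to locate the index instead of mere membership of the family in $M$. Beyond that, we need that the witnessing conditions can be chosen inside $M$ below $r=q'\restriction M$. Once that is in place, strong genericity replaces the maximal antichain argument directly.
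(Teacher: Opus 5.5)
Your proposal is correct, and it takes a genuinely different route from the paper.

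\textbf{The paper's route.} The paper builds a new coloring that diagonalizes against every pair $(\dP_i,\dot A_i)$ with $|\dP_i|=\aleph_1$, up to isomorphism. At stage $\beta$ it handles every condition of every $\dP_\gamma$ with $\gamma<\beta$. It then reflects $\dP$ to $N$. Strong genericity makes $G\cap N$ generic for $\dP\cap N$, so $\dot A\cap N$ is forced to be uncountable. The paper identifies $(\dP\cap N,\dot A\cap N)$ with some $(\dP_i,\dot A_i)$ and pulls a witness back through the isomorphism.

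\textbf{Your route.} You keep the block coloring from Theorem \ref{theoerem: cccomega2} and carry out the diagonalization inside $M$. The family you build depends on $r$, and hence on $\delta'$. So you cannot simply pick $\delta'$ above the family's index afterwards, as the paper does with $\beta>\max\{\delta,i\}$. Elementarity of $M$ with respect to the enumeration is exactly what places that index below $\sup(M\cap\omega_2)\le\delta'$. You correctly flagged this as the crux.

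\textbf{What each approach buys.} Yours gives a single coloring that defeats c.c.c.\ and strongly proper forcings at once. It also avoids coding $\aleph_1$-sized forcings and the reflection to $\dP\cap N$. Strong genericity in fact makes the block structure superfluous. One pair monochromatic at $\delta'$, with a witness in $M$ below $r$, already suffices; full blocks were needed only for the maximal-antichain argument.

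\textbf{Two points to make explicit.}
\begin{enumerate}
\item Add $\omega_1\subseteq M$ when you shrink $S$; club many $M\in[H(\lambda)]^{\aleph_1}$ satisfy this. Without it, the block singled out by the coloring has an arbitrary index in $\omega_1$. That block, and its witnessing condition, then need not lie in $M$.
\item Under the paper's definition, $q'\restriction M$ need not extend $p$, so it need not force that $\dot A$ is large. But $q'\restriction M$ is compatible with $q'\le p$. So replace $r$ by a common extension in $M$ of $q'\restriction M$ and $p$; this preserves the residue property.
\end{enumerate}
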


	\begin{proof}
		Enumerate all possible pairs $\langle (\dP_i, \dot{A}_i): i<\omega_2\rangle$ where $\dP_i$ is a forcing of size $\aleph_1$ and $\Vdash_{\dP_i}\dot{A}_i\in [i]^{\omega_1}$. Up to isomorphism, there are indeed $\aleph_2$ many such pairs. 

		Let us define a $2$-bounded coloring $c: [\omega_2]^2\to \omega_2$ as follows. At stage $\beta\in \omega_2$, let us fix an enumeration $f: \omega_1 \to \bigcup_{\gamma<\beta} \{\gamma\}\times \dP_\gamma$. We proceed to define $c(\cdot, \beta)$ recursively in $\omega_1$ many steps. At stage $0$, let $f(0)=(\gamma_0, p_0)$, then we find $q\leq p_0$ in $\dP_{\gamma_0}$ and some $\{\alpha_0,\mu_0\}$ such that $q\Vdash_{\dP_{\gamma_0}} \{\alpha_0, \mu_0\}\subseteq \dot{A}_{\gamma_0}$. We make sure $c(\alpha_0, \beta)=c(\mu_0,\beta)$. At stage $\eta<\omega_1$, suppose we have already defined $c$ on $C=\{\alpha_i,\mu_i: i<\eta\}$ and let $f(\eta)=(\gamma_\eta, p_\eta)$. Since $\Vdash_{\dP_{\gamma_\eta}} \dot{A}_{\gamma_\eta}$ is uncountable, we can find $q\leq p_{\gamma_\eta}$ in $\dP_{\gamma_\eta}$ and $\{\alpha_\eta, \mu_\eta\}$ disjoint from $C$ such that $q\Vdash_{\dP_{\gamma_\eta}} \{\alpha_\eta, \mu_\eta\} \subseteq \dot{A}_{\gamma_\eta}$. Then we extend the definition of $c(\cdot, \beta)$ to make sure $c(\alpha_{\eta}, \beta)=c(\mu_{\eta}, \beta)$. Define $c$ injectively on other points below $\beta$. This finishes the definition of $c$, which is easily seen to be $2$-bounded.

		Let us verify that $c$ has the property as desired. Suppose for the sake of contradiction that $\dP$ is a strongly proper forcing with respect to a stationary subset $S$ of $[H(\lambda)]^{\aleph_1}$ where $\lambda=(2^{|\dP|})^+$ and $\dot{A}$ is forced to be a $c$-rainbow subset of $\omega_2$ of size $\aleph_2$. We may find $N\in S$ containing relevant objects including $\dP, \dot{A}$ and $c$ with $N\cap \omega_2=\delta$ and look at $\dP\cap N$ and $\dot{A}\cap N$. Note that $\Vdash_{\dP\cap N} \dot{A}\cap N$ is uncountable. To see this, suppose for the sake of contradiction that there is some $t\in \dP\cap N$ forcing over $\dP\cap N$ that $\dot{A}\cap N$ is countable. Let $q\leq_{\dP} t$ be a strongly $(N,\dP)$-generic condition and let $G\subseteq \dP$ be generic over $V$. As a result, $G\cap N$ is $\dP\cap N$-generic over $V$. Therefore, in $V[G]$, $(\dot{A})^G \cap N[G]$ is countable. However, $(\dot{A})^G\in N[G]\prec H(\lambda)[G]$ and $H(\lambda)[G] \models (\dot{A})^G$ has size $\aleph_2$. Hence, $(\dot{A})^G \cap N[G]$ must be uncountable. This is a contradiction.
		
		As a result, $(\dP\cap N, \dot{A}\cap N)\simeq (\dP_i, \dot{A}_i)$ for some $i\in \omega_2$. 

		Fix a witnessing ismorphism $\pi: \dP\cap N\to \dP_i$. Let $r$ be a strongly $(N,\dP)$-generic condition and $\beta>\max\{\delta,i\}$ such that $r\Vdash_{\dP} \beta\in \dot{A}-\delta$. We know that $r'=r\restriction N \in \dP\cap N$. But in our recursive construction at stage $\beta$, we make sure there are some $t\in \dP_i$ extending $\pi(r')$ in $\dP_i$ and $\{\alpha,\mu\}$ such that $t\Vdash_{\dP_i}\alpha, \mu\in \dot{A}_i$ such that $c(\alpha, \beta)=c(\mu,\beta)$. Since $\pi$ is an isomorphism, we have that $\pi^{-1}(t)$ extends $r'$ and $\pi^{-1}(t) \Vdash_{\dP\cap N} \alpha,\mu \in \dot{A}\cap N$. Let $w\in \dP$ be a lower bound for $\pi^{-1}(t)$ and $r$. Then $w \Vdash_{\dP}\alpha, \mu, \beta\in \dot{A}$ and $c(\alpha,\beta)=c(\mu,\beta)$, contradicting with the fact that $\Vdash_{\dP}\dot{A}$ is a $c$-rainbow subset.
	\end{proof}

	Abraham, Cummings and Smyth \cite{AbCumSmythePolychromatic} showed that it is consistent that there exists a 2-bounded coloring on $\omega_1$ such that no c.c.c forcing can add an uncountable rainbow subset. The Continuum hypothesis ($\CH$) fails in the model they produced. They asked if $\CH$ is compatible with the existence of such a ``c.c.c-indestructibly bad" coloring. We provide a partial positive answer.

	\begin{mysen}\label{theorem: cccpartition}
		$\CH$ implies the existence of a 2-bounded coloring $c: [\omega_1]^2\to \omega_1$ such that no c.c.c forcing can add a partition of $\omega_1$ into countably many $c$-rainbow sets.
	\end{mysen}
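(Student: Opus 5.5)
The plan follows the proof of Theorem \ref{theoerem: cccomega2}, shifted down one cardinal and weakened from ``rainbow'' to ``partition into countably many rainbow sets''. As there, a \emph{block} is a countable set of pairwise disjoint pairs. We use $\CH$ to fix an enumeration $\langle \mathcal{A}_\delta:\delta<\omega_1\rangle$, with each object appearing unboundedly often, of all countable families $\mathcal{A}=\{A^{n,k}: n,k\in\omega\}$ satisfying two conditions: each $A^{n,k}$ is a block of pairs of countable ordinals, and for fixed $n$ the sets $\bigcup A^{n,k}$, $k\in\omega$, are pairwise disjoint. There are only $2^{\aleph_0}=\aleph_1$ such families.

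\emph{Defining the coloring.} We define a normal $2$-bounded $c$ column by column. At stage $\delta$ we list the countably many requirements $(\delta',n)$ with $\delta'\le\delta$ and $\bigcup\mathcal{A}_{\delta'}\subseteq\delta$, as $\langle r_j:j<\omega\rangle$. We handle them in $\omega$ steps. At step $j$ with $r_j=(\delta',n)$, finitely many ordinals have already been used, so some $k$ has $\bigcup A^{n,k}_{\delta'}$ disjoint from them. We choose such a $k$, commit to $\bigcup A^{n,k}_{\delta'}$, and set $c(\alpha,\delta)=c(\beta,\delta)$ for every $\{\alpha,\beta\}\in A^{n,k}_{\delta'}$, using fresh colors. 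The remaining points below $\delta$ get injective fresh colors. Because the committed pairs are pairwise disjoint, $c$ is $2$-bounded.

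\emph{Defeating a c.c.c.\ forcing.} Suppose $\dP$ is c.c.c., $\dot f$ is a name for a function $\omega_1\to\omega$, and each class $\dot f^{-1}\{n\}$ is forced to be $c$-rainbow.
\begin{enumerate}
\item By the c.c.c., the sup of the countable classes is forced to lie below some ground-model $\gamma<\omega_1$.
\item Let $D_n$ be the set of conditions forcing that $\dot f^{-1}\{n\}$ is uncountable. For each $n$ and $k$ we build a block $A^{n,k}$ together with an antichain $M_{n,k}$ that is maximal below $D_n$. Each $r\in M_{n,k}$ forces some pair of $A^{n,k}$ into $\dot f^{-1}\{n\}$.
\item We construct each $A^{n,k}$ by a recursion of length $<\omega_1$, using the c.c.c. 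At each step we extend a condition in $D_n$ to force a new pair into class $n$, avoiding the countably many ordinals used so far (in this block and in the earlier blocks $A^{n,k'}$, $k'<k$). This is possible because class $n$ is forced uncountable below $D_n$.
\item The resulting family, with all ordinals below some countable bound, is $\mathcal{A}_{\delta_0}$ for some $\delta_0$.
\item Now pick $p$ and $\delta'>\max(\gamma,\delta_0,\sup\bigcup\mathcal{A}_{\delta_0})$ with $p\Vdash \dot f(\delta')=n$ for some $n$. Then $p\in D_n$ in effect, after extending, because $\delta'>\gamma$.
\item At stage $\delta'$ the requirement $(\delta_0,n)$ was met by some $k$. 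By maximality below $D_n$, $p$ is compatible with some $r\in M_{n,k}$, and $r$ forces some $\{\alpha,\beta\}\in A^{n,k}$ into class $n$.
\item A common extension of $p$ and $r$ forces $\alpha,\beta,\delta'$ into class $n$ while $c(\alpha,\delta')=c(\beta,\delta')$. This contradicts that class $n$ is rainbow.
\end{enumerate}

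\emph{Main obstacle.} The delicate point is the block construction in step 3. The pairs witnessed by different antichain elements must be pairwise disjoint, and different $k$'s must give disjoint blocks, so that the column recursion can always dodge finitely many used points. First I would do a single $\omega_1$-length (in fact countable, by the c.c.c.) recursion per $(n,k)$. It picks incompatible conditions in $D_n$ one at a time, each forcing a pair outside all previously used ordinals, and stops once the antichain is maximal below $D_n$. It must also be checked that the case where $\dot f(\delta')=n$ is decided only below $p$ causes no problem. This is handled by strengthening $p$ into $D_n$ before using maximality.
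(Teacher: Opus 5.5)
There is a genuine gap in the construction of the coloring. You assert that at step $j$ of column $\delta$ ``finitely many ordinals have already been used''. But each earlier step committed an entire block $\bigcup A^{n',k'}_{\delta''}$, and blocks are countably infinite in general: in step 3 they are indexed by maximal antichains, which need not be finite. So the used set is countable, and a countable set can meet every member of a countable family of pairwise disjoint sets. In Theorem~\ref{theoerem: cccomega2} the analogous step worked only because there were $\aleph_1$ disjoint candidates against a countable used set; here you have only $\aleph_0$.

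Concretely, below $\omega^2$ take one family whose color-$0$ blocks are the rows $A^{0,k}=\{\{\omega k+2m,\omega k+2m+1\}:m\in\omega\}$, and another whose color-$0$ blocks are the columns $B^{0,m}=\{\{\omega\cdot 2l+m,\omega\cdot(2l+1)+m\}:l\in\omega\}$. Both satisfy your condition on families, yet every row meets every column. So at any column $\delta\geq\omega^2$ serving both requirements, once a block is committed for one of them, every candidate for the other meets it. Committing it anyway puts three points into one color class. Dropping the offending pairs instead breaks step 6, which needs \emph{every} pair of the chosen block to be monochromatic, since you do not know which $r\in M_{n,k}$ meets $p$. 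Because you impose no relation between blocks of different colors, the same clash can already occur inside a single family between colors $n\neq n'$.

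The missing idea, which is how the paper proceeds, is to replace ``pairwise disjoint'' by order-separation plus cofinality, and to let each column serve only one family. In the paper:
\begin{itemize}
\item each block is increasing;
\item for each color $j$, the blocks in $A^i_j$ are linearly ordered (each lies entirely below or entirely above the other) and cofinal in a common limit $\delta_i$;
\item column $i\geq\delta_i$ treats only $\mathcal{A}_i$.
\end{itemize}
After finitely many colors the used set is a finite union of blocks, each bounded below $\delta_i$, so some block of the next color lies entirely above it.

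In your framework this means two changes. First, build the forcing-side blocks along an $\omega$-enumeration of $\{n:D_n\neq\emptyset\}\times\omega$, each block entirely above all earlier ones; then for each such $n$ the blocks $A^{n,k}$ increase cofinally to one common supremum. Second, let column $\delta$ handle only $\mathcal{A}_\delta$, with each family listed unboundedly often. With these changes your steps 1 and 5--7 go through. That includes your $\gamma$/$D_n$ treatment of possibly countable fibers, which is more careful than the paper's reduction to all fibers being uncountable. The paper's guessing of countable restrictions of $\omega_1$-length sequences of blocks is then not needed.
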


	\begin{proof}
		The proof is similar to that of Theorem \ref{theoerem: cccomega2}. Given a ordinal $\delta$, \emph{an increasing block of $[\delta]^2$} is of the form $\{b_i\in [\delta]^2: i<\gamma\}$ for some $\gamma<\omega_1$ such that $i<j<\gamma$ implies that $b_i<b_j$. Let $I_\delta$ be the collection of all increasing blocks of $[\delta]^2$. Note that $|I_\delta|=2^\omega= \omega_1$ and for any $\delta_0<\delta_1$, $I_{\delta_0}\subseteq I_{\delta_1}$. Let $I=\bigcup_{\delta\in \omega_1} I_\delta$.

		Let us fix enumerations $\langle \mathcal{A}_i: i<\omega_1\rangle$ and $\langle \delta_i\in \acc(\omega_1): i\in \omega\rangle$ such that
		\begin{itemize}
			\item each $\mathcal{A}_i = \{A^i_j: j\in \omega\}$ where $A^i_j \in	[I_{\delta_i}]^{\aleph_0}$ satisfies that 
			\begin{itemize}
				\item for $a\neq b\in A^i_j$, either $\bigcup a < \bigcup b$ or $\bigcup b < \bigcup a$, and
				\item for any $\xi<\delta_i$, there exists $a\in A^i_j$ with $\min \bigcup a > \xi$;
			\end{itemize}

			\item for any $\mathcal{A}=\langle A_j: j\in \omega\rangle$ such that each $A_j = \langle a_{j,k}\in I: k\in \omega_1\rangle$ satisfies $\sup \bigcup a_{j,k} < \min \bigcup a_{j,k'}$ whenever $k<k'$ and $\bigcup \bigcup A_j \cap \bigcup\bigcup A_{j'}=\emptyset$ whenever $j\neq j'$, there exist club many $\delta\in \omega_1$ for which there are unboundedly many $i\in \omega_1$ such that $\delta_i=\delta$ and $\mathcal{A}\restriction \delta =\mathcal{A}_i$.
		\end{itemize}
		Such an enumeration can be easily defined using $\CH$. Now we define a 2-bounded coloring $c: [\omega_1]^2\to \omega_1$ such that for any $i\in \omega_1$, if $\delta_i\leq i$, then we make sure that for any $j\in \omega$, there is $a\in A^i_j$ such that for all $\{\alpha,\beta\}\in a$, $c(\alpha, i)= c(\beta, i)$. To see why this is possible, recursively assume $j\in \omega$ is given and for all $k<j$, we have already picked $a_k\in A^{i}_k$ and define $c(\cdot, i)$ on $\bigcup_{k<j} \bigcup a_k$ such that for all $\{\alpha,\beta\}\in a_k$, $c(\alpha,i)=c(\beta, i)$. Consider $\xi = \max_{k<j} \sup\bigcup a_k \in \delta_i$. By our assumption, there exists $a_j\in A^i_j$ such that $\min \bigcup a_j > \xi$. As a result, we have not defined $c(\cdot, i)$ on $\bigcup a_j$. We can then define $c(\cdot, i)$ on $\bigcup a_j$ in such a way that for any $\{\alpha,\beta\}\in a_j$, $c(\alpha,i)=c(\beta,i)$. Finally, it is easy to extend the definition of $c(\cdot, i)$ from $\bigcup_{j\in \omega} \bigcup a_j$ to the whole of $i$ satisfying the $2$-bounded requirement.

	Let us verify that the coloring is as desired. Suppose for the sake of contradiction that there is a c.c.c forcing $\dP$ that adds a partition $\dot{f}: \omega_1\to \omega$ such that $\dot{f}^{-1}(n)$ is $c$-rainbow for any $n\in \omega$. We may also assume that $\Vdash_{\dP} \dot{f}^{-1}(n)$ is uncountable for all $n\in \omega$, since we can always remove some bounded set otherwise. Recursively build $\mathcal{A}=\langle A_j: j\in \omega\rangle$ as follows
		 
		\begin{itemize}
			\item $\langle A_j: j\in \omega\rangle$ such that each $A_j = \langle a_{j,k}\in I: k\in \omega_1\rangle$ satisfies $\sup \bigcup a_{j,k} < \min \bigcup a_{j,k'}$ whenever $k<k'$ and $\bigcup \bigcup A_j \cap \bigcup\bigcup A_{j'}=\emptyset$ whenever $j\neq j'$ and 
			\item for each $j\in \omega$ and $k\in \omega_1$, there exists a maximal antichain $M_{j,k}\subseteq \dP$ such that for any $r\in M_{j,k}$, there exists $b=\{\alpha,\beta\}\in a_{j,k}$ such that $r\Vdash_{\dP} b \subseteq \dot{f}^{-1}(j)$.
		\end{itemize}
		
		To see that the construction is possible, let us assume that we are at stage $k\in \omega_1$ and $j\in \omega$ and we have already defined $\langle a_{j,k'}: k'<k, j\in \omega\rangle$ and $\langle a_{j', k}: j'<j\rangle$ satisfying the requirement. Let $\xi= \sup_{(k'<k \wedge j'\in \omega) \vee (k'=k \wedge j'<j) }a_{j',k'} +1$. Since $\Vdash_{\dP} \dot{f}^{-1}(j)$ is uncountable, we can find $p_0\in \dP$ and some $b_0\in [\omega_1-\xi]^2$ such that $p_0\Vdash b_0\subseteq \dot{f}^{-1}(j)$. Suppose for some $\gamma<\omega_1$, we have defined $\langle p_l: l<\gamma\rangle$ and $\langle b_{l}\in [\omega_1]^2: l\in \gamma\rangle$ such that 	
		
		\begin{itemize}
			\item $\langle p_l: l<\gamma\rangle$ is an antichain in $\dP$, 
			\item $p_l\Vdash b_l\subseteq \dot{f}^{-1}(j)$, and 
			\item $b_l < b_{l'}$ whenever $l<l'$.
		\end{itemize}
		
		If $\langle p_l: l<\gamma\rangle$ is already a maximal antichain, we halt and let $a_{j,k}$ be $\{b_l: l<\gamma\}$. Otherwise, we find some $p_\gamma$ that is incompatible with any element in $\{p_l: l<\gamma\}$ and $b_\gamma > \sup_{l<\gamma} b_l$ such that $p_\gamma\Vdash b_\gamma \subseteq \dot{f}^{-1}(j)$. Since $\dP$ is c.c.c, this process must halt at some stage $\eta<\omega_1$. Define $a_{j,k}= \{b_l: i<\eta\}$.

		By the assumption of the enumeration $\langle\mathcal{A}_i: i\in \omega_1\rangle$ and the definition of $c$, there is some large enough $i\in \omega_1$ such that for all $j\in \omega$, there is $d_j\in A_j \restriction i$ such that for all $\{\alpha,\beta\}\in d_j$, it is the case that $c(\alpha,i)=c(\beta, i)$. Find $p\in \dP$ and $m\in \omega$ such that $p\Vdash \dot{f}(i)=m$. Let $M\subseteq \dP$ be a maximal antichain associated with $d_m$. We can find some $r\in M$ compatible with $p$. By the property of $\mathcal{A}_m$, there is $\{\alpha,\beta\}\in d_m$ such that $r\Vdash \{\alpha,\beta\}\subseteq \dot{f}^{-1}(m)$. A common extension of $p$ and $r$ forces $\{\alpha,\beta, i\}\subseteq \dot{f}^{-1}(m)$ and $c(\alpha,i)=c(\beta, i)$, contradicting the fact that $\Vdash_{\dP} \dot{f}^{-1}(m)$ is $c$-rainbow.
	\end{proof}

	\begin{mybem}
		Recall that Todorcevic showed that for any $<\omega$-bounded coloring $c: [\omega_1]^2\to \omega_1$, in the forcing extension by $\Add(\omega, \omega_1)* \mathbb{J}_{\omega_1}$ where $\mathbb{J}_{\omega_1}$ is Jensen's countably closed forcing that adds a fast club, there is a c.c.c forcing adding a countable partition of $\omega_1$ into $c$-rainbow subsets.  Theorem \ref{theorem: cccpartition} shows that in a sense, $\Add(\omega, \omega_1)$ is necessary. Compare this with the task to add an isomorphism between two $\aleph_1$-dense subsets of the reals \cite{BaumgartnerReals} where $\CH$ is enough to construct a ``fast enough" club that can be used to define a c.c.c forcing to accomplish the task.  
	\end{mybem}

%
%

	Now we recall a boundedness condition that was considered in \cite{GartiZhang}.

	\begin{mydef}[Definition 2.1, \cite{GartiZhang}]\label{deftbd}
		Let $\kappa$ and $\alpha$ be two ordinals.
		A function $f:[\kappa]^2\rightarrow\kappa$ is $<\alpha$-\emph{type bounded} iff there is an ordinal $\gamma<\alpha$ so that ${\rm otp}(t_{\alpha\beta})\leq\gamma$ whenever $\alpha<\beta<\kappa$, where $t_{\alpha\beta}=\{\xi\in\beta:f(\xi,\beta)=f(\alpha,\beta)\}$.
	\end{mydef}

	Recall the fact that for any $<\omega_1$-type-bounded $c: [\omega_1]^2\to \omega_1$, there exists a proper forcing that adds a rainbow subset by \cite{TodorcevicPositivePartition} (see also \cite{GartiZhang}). 
	
	However, the analogous statement for $\omega_2$ fails badly.

	\begin{mypro}\label{proposition: typebounded}
		For any cardinal $\kappa\geq \omega$, $\kappa^+\not\to^*(\omega_1+1)^2_{<\kappa^\omega-t-bdd}$
	\end{mypro}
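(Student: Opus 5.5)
The plan is to make every fiber $c(\cdot,\beta)$ use only countably many colors, with each color class of small order type. Then any $c$-rainbow set with maximum $\beta$ has at most $\aleph_0$ elements below $\beta$. In particular no rainbow set has order type $\omega_1+1$, since such a set has a maximum $\beta$ and $\omega_1$ many elements below it on which $c(\cdot,\beta)$ would have to be injective. The remaining work is to arrange the order-type bound on the color classes $t_{\alpha\beta}$.

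\textbf{Key lemma.} Every $\beta<\kappa^+$ can be written as $\beta=\bigcup_{n\in\omega}A^\beta_n$ with $\otp(A^\beta_n)\le\kappa^{n+1}$, where $\kappa^{n+1}$ is ordinal exponentiation. I would prove this by induction on $\beta$.
\begin{itemize}
\item For $\beta\le\kappa$, take $A_0=\beta$ and $A_n=\emptyset$ for $n\ge 1$.
\item For a successor $\beta=\beta'+1$, add the point $\beta'$ to $A^{\beta'}_0$.
\item For a limit $\beta$, note $\cf(\beta)\le\kappa$ because $|\beta|\le\kappa$. Fix a continuous increasing cofinal sequence $\langle\beta_i:i<\cf\beta\rangle$ with $\beta_0=0$. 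Each interval $[\beta_i,\beta_{i+1})$ is order-isomorphic to an ordinal below $\kappa^+$, so by induction it is $\bigcup_n A^i_n$ with $\otp(A^i_n)\le\kappa^{n+1}$.
\end{itemize}
In the limit case, set $B_0=\{\beta_i:i<\cf\beta\}$ and $B_{n+1}=\bigcup_{i}A^i_n$. Then $\otp(B_0)\le\kappa\le\kappa^1$. The sets $A^i_n$ lie in disjoint consecutive intervals, so $B_{n+1}$ is an ordered sum of at most $\kappa$ blocks, each of type $\le\kappa^{n+1}$. Hence $\otp(B_{n+1})\le\kappa^{n+1}\cdot\kappa=\kappa^{n+2}$, which closes the induction. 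Making the sets disjoint afterwards only shrinks order types.

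\textbf{Defining the coloring.} Fix pairwise disjoint sets $\{\zeta^\beta_n:n\in\omega\}$ of colors, for instance $\zeta^\beta_n=\omega\cdot\beta+n$ after a harmless recoding of $\kappa^+$. Put $c(\xi,\beta)=\zeta^\beta_n$ for the unique $n$ with $\xi\in A^\beta_n$. Then $c$ is normal, and each class $t_{\alpha\beta}$ is some $A^\beta_n$, so it has order type $\le\kappa^{n+1}<\kappa^\omega$. The rainbow argument from the first paragraph finishes the proof.

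\textbf{Main obstacle.} The difficulty is matching this to the exact reading of ``$<\kappa^\omega$-type bounded'' in Definition~\ref{deftbd}, which asks for a single $\gamma<\kappa^\omega$ bounding all $t_{\alpha\beta}$. With countably many classes per fiber the exponents $n$ grow, and a uniform bound is impossible in that regime. For regular uncountable $\kappa$ and $\beta=\kappa^2$ the counting argument shows some class must have type $\ge\kappa^2$, and iterating gives arbitrarily large finite exponents. So I would first check whether the intended relation allows the bound $\gamma$ to depend on $n$, or on the fiber, with supremum $\kappa^\omega$; this is what the subscript ``$<\kappa^\omega$'' strongly suggests.

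If a uniform bound really is required, I would drop countability of the number of classes. Instead I would aim for fibers in which every set of order type $\omega_1$ repeats a color, using classes of type $\le\kappa^n$ for a fixed $n$, and look for a pressing-down or cofinality argument on the $\omega_1$-sequence in place of the pigeonhole on countably many colors. That is the step I would expect to be genuinely hard.
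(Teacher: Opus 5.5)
Your argument is the paper's own. The paper quotes the Milner--Rado paradox (each $\alpha<\kappa^+$ is a disjoint union $\bigcup_n A^\alpha_n$ with $\otp(A^\alpha_n)\le\kappa^n$), makes the fiber $c(\cdot,\alpha)$ constant exactly on the pieces $A^\alpha_n$, and applies the pigeonhole principle to the $\omega_1$ points below the top of a putative rainbow set of type $\omega_1+1$.

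The only difference is that you reprove the decomposition, and there is one slip there. In the successor step, putting $\beta'$ on top of $A^{\beta'}_0$ can violate $\otp(A_0)\le\kappa$: for $\beta'=\kappa$ you have $A^{\kappa}_0=\kappa$, and adding $\kappa$ gives type $\kappa+1$. Shift indices instead, setting $A^\beta_0=\{\beta'\}$ and $A^\beta_{n+1}=A^{\beta'}_n$; the latter has type $\le\kappa^{n+1}\le\kappa^{n+2}$. The set $B_0$ in your limit step is redundant, since each $\beta_i$ already lies in $[\beta_i,\beta_{i+1})$, but it does no harm.

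Your concern about the meaning of ``$<\kappa^\omega$-type bounded'' is justified, and the paper resolves it the way you guessed. It simply asserts that $c$ is clearly $<\kappa^\omega$-type-bounded. That is true only in the per-class sense: every $t_{\alpha\beta}$ has order type $<\kappa^\omega$. As you observe, for regular uncountable $\kappa$ a coloring with countably many classes per fiber cannot have a single bound $\gamma<\kappa^\omega$: if $\gamma<\kappa^m$, the fiber at $\beta=\kappa^m$ already has a class of type $\kappa^m$.

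The per-class reading is the intended one. The first remark after Theorem~\ref{Theorem: Stationary Rainbow Set} says the method handles $<\omega_1^n$-type bounded colorings for each $n$ but not $<\omega_1^\omega$-type bounded ones. That contrast would be incoherent under the uniform reading of Definition~\ref{deftbd}, since any $\gamma<\omega_1^\omega$ lies below some $\omega_1^n$. With the per-class reading your proof is complete, and the plan in your last paragraph is unnecessary.
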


	\begin{proof}
		We use a theorem due to Milner and Rado \cite{MilnerRado} (the so-called \emph{Milner-Rado paradox}): For each $\alpha<\kappa^+$, there are disjoint $\langle A^\alpha_n: n\in \omega\rangle$ such that $\otp(A^\alpha_n)\leq \kappa^n$ and $\bigcup_{n\in \omega} A^\alpha_n = \alpha$. The conclusion is clearly true when $\kappa=\omega$. Let us assume $\kappa>\omega$. Define $c: [\kappa^+]^2\to \kappa$ such that for any $\alpha<\kappa^+$ and $\beta_0,\beta_1\in \alpha$, $c(\beta_0,\alpha)=c(\beta_1,\alpha)$ iff there is $n\in \omega$, $\beta_0,\beta_1\in A^\alpha_n$. It is clear that $c$ is $<\kappa^\omega$-type-bounded. Suppose for the sake of contradiction that $A \cup \{\delta\} \subseteq \kappa^+$ is a $c$-rainbow subset of order type $\omega_1+1$. For each $\eta\in A$, we can find a unique $n\in \omega$ such that $\eta\in A^\delta_n$. By the pigeonhole principle, we can find an uncountable $A'\subseteq A$ and $n\in \omega$, such that $A'\subseteq A^\delta_n$. By the definition of $c$, $c(\cdot, \delta)\restriction A'$ is constant, which contradicts with the assumption that $A\cup \{\delta\}$ is $c$-rainbow.
	\end{proof}

	By Proposition \ref{proposition: typebounded}, there is a $<\omega_1^\omega$-type-bounded coloring $[\omega_2]^2\to \omega_2$ such that no forcing that preserves both $\omega_1$ and $\omega_2$ can add a rainbow subset of size $\omega_2$. This result will be used later to compare results we obtain in this paper with those obtained in \cite{GartiZhang}.

	\section{Adding Rainbow Sets}\label{Section: Adding Rainbow Sets}
	
	Todorcevic showed in \cite{TodorcevicPositivePartition} that, given a $<\omega$-bounded coloring $c$ on $\omega_1$, there is a proper forcing which adds partition of $\omega_1$ into countably many $c$-rainbow subsets of $\omega_1$. His poset is a three-step iteration $\Add(\omega,\omega_1)*\dJ_{\omega_1}*\dP_f$, where $\dJ_{\omega_1}$ is Jensen's poset for adding a fast club and $\dP_f$ is the poset of finite $f$-rainbow sets separated by points in the generic club. However, his proof, at least seemingly, is very specific to the case of $\omega_1$, since it requires the use of backwards induction, which only works when the conditions are finite.
	
	In \cite{AbCumSmythePolychromatic}, Abraham, Cummings and Smyth give a slightly different proof of Todorcevic's result. They replace his three-step iteration by a single side condition poset and prove the properness by using a repeated application of Fodor's lemma. Again, this proof is quite specific to the case of $\omega_1$, since a repeated shrinking of stationary subsets of $\omega_1$ appeared in the proof, which is not suitable to more than finitely many rounds. However, with help from additional assumptions, the method can be adapted to yield similar results for larger cardinals.
	
	For the rest of this section, fix a normal $\omega$-bounded coloring $f\colon[\omega_2]^2\to\omega_2$.
	
	\begin{mydef}
		Let $l\colon\omega_2\to\omega_1$ be a partial function. We say that $l$ is \emph{good for $f$} if for every $j<\omega_1$, $l^{-1}[\{j\}]$ is $f$-rainbow.
	\end{mydef}
	
	The main idea in the proof of properness comes from the possibility of amalgamating conditions, provided by the following definition:
	
	\begin{mydef}
		Let $X\subseteq\omega_2$ be countable. Define
		$$F_f(X):=\{\alpha\in\omega_2\;|\;\exists b=_{\mathrm{def}}\{\beta,\gamma\}_{<}\in[X]^2\;f(\{\alpha,\gamma\})=f(b)\}$$
	\end{mydef}
	
	Clearly, since $f$ is $\omega$-bounded, $F_f(X)$ is countable for every countable $X\subseteq\omega_2$. Since $f$ is fixed, we just write $F(X)$. The relevance of $F(X)$ is as follows:
	
	\begin{mylem}[Lemma 3, \cite{AbCumSmythePolychromatic}]\label{Lemma: Amalgamating Rainbow Sets}
		Let $X<\alpha<Y$ be such that $X\cup\{\alpha\}$ and $X\cup Y$ are both $f$-rainbow. If $\alpha\notin F(X\cup Y)$, then $X\cup\{\alpha\}\cup Y$ is $f$-rainbow.
	\end{mylem}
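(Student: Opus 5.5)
We argue by contradiction. Suppose $X\cup\{\alpha\}\cup Y$ is not $f$-rainbow. Then there are two distinct pairs $a\neq b$ in $[X\cup\{\alpha\}\cup Y]^2$ with $f(a)=f(b)$. Since $f$ is normal, $\max(a)=\max(b)=:\delta$. We do a case analysis on where $\delta$ lies and which pairs involve $\alpha$. The goal in each case is either to contradict one of the two hypotheses that $X\cup\{\alpha\}$ and $X\cup Y$ are rainbow, or to show $\alpha\in F(X\cup Y)$.

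First, if neither $a$ nor $b$ contains $\alpha$, then both lie in $[X\cup Y]^2$. This contradicts the assumption that $X\cup Y$ is rainbow. So we may assume $\alpha\in a$, say.

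Next, suppose $\delta\le\alpha$. Because $X<\alpha<Y$, every element of $a\cup b$ is at most $\delta$, so both pairs lie in $[X\cup\{\alpha\}]^2$. This contradicts the assumption that $X\cup\{\alpha\}$ is rainbow. Hence $\delta\in Y$ and $\delta>\alpha$, so $a=\{\alpha,\delta\}$.

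Now $b\ne a$ and $\max b=\delta$, so $b=\{\beta,\delta\}$ with $\beta\neq\alpha$. Since $\beta$ is either in $X\cup Y$ or equal to $\alpha$, we get $\beta\in X\cup Y$, and $\beta<\delta$. Thus $b=\{\beta,\delta\}_<\in[X\cup Y]^2$ and $f(\{\alpha,\delta\})=f(b)$. With $\gamma=\delta$, this is exactly the witness in the definition of $F(X\cup Y)$, so $\alpha\in F(X\cup Y)$, contradicting the hypothesis.

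The only point needing care is the reduction via normality: it forces the two pairs with equal color to share their maximum, which is what makes the case split exhaustive. One should also note that the definition of $F$ requires the witness $b$ to have the same top element $\gamma$ as the pair $\{\alpha,\gamma\}$, and that matches precisely the situation produced in the last case. No step here is a real obstacle; the argument is a direct unwinding of the definitions.
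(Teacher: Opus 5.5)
Your proof is correct and is essentially the paper's argument. Normality gives the two offending pairs a common maximum; rainbowness of $X\cup\{\alpha\}$ forces that maximum into $Y$; rainbowness of $X\cup Y$ forces one pair to be $\{\alpha,\delta\}$; and the other pair then witnesses $\alpha\in F(X\cup Y)$. The only difference is the order in which you dispose of the two easy cases, which is immaterial.
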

	
	\begin{proof}
		Suppose toward a contradiction that $X\cup\{\alpha\}\cup Y$ is not $f$-rainbow, witnessed by $\alpha_0<\beta_0$ and $\alpha_1<\beta_1$. Since $f$ is normal, $\beta_0=\beta_1$. Furthermore, $\beta_0$ is necessarily an element of $Y$, since $X\cup\{\alpha\}$ is $f$-rainbow and $X\cup\{\alpha\}<Y$. Lastly, either $\alpha_0$ or $\alpha_1$ equals $\alpha$, since $X\cup Y$ is $f$-rainbow, suppose without loss of generality that this is $\alpha_0$. But then $\beta_0$ and $\{\alpha_1,\beta_1\}$ witness that $\alpha_0=\alpha\in F(X\cup Y)$, a contradiction.
	\end{proof}
	
	Now we can define our forcing notion, which is a straightforward adaptation of the poset defined by Abraham-Cummings-Smyth \cite{AbCumSmythePolychromatic}:
	
	\begin{mydef}
		The forcing notion $\dP_f$ consists of pairs $(\mathcal{M},l)$ as follows:
		\begin{enumerate}
			\item $\mathcal{M}$ is a countable set of $\omega_1$-sized, countably closed elementary substructures of $H(\omega_4)$ containing $f$ which is totally ordered by $\in$.
			\item $l$ is a countable partial function from $\omega_2$ into $\omega_1$ which is good for $f$. Moreover, whenever $\alpha,\beta<\omega_2$ are such that $l(\alpha)=l(\beta)$, there is $M\in\mathcal{M}$ such that $\alpha\in M$ and $\beta\notin M$.
		\end{enumerate}
		For $p_0=(\mathcal{M}_0,l_0)$, $p_1=(\mathcal{M}_1,l_1)$ in $\dP_f$, we let $p_1\leq p_0$ if
		\begin{enumerate}
			\item $\mathcal{M}_0\subseteq\mathcal{M}_1$;
			\item $l_0\subseteq l_1$.
		\end{enumerate}
	\end{mydef}
	
	The following two lemmas are easy:
	
	\begin{mylem}
		$\dP_f$ is countably closed. More specifically, whenever $(\mathcal{M}_n,l_n)_{n\in\omega}$ is a descending sequence of elements of $\dP_f$, letting
		$$\mathcal{M}_{\omega}:=\bigcup_{n\in\omega}\mathcal{M}_n\text{ and }l_{\omega}:=\bigcup_{n\in\omega}l_n$$
		$p_{\omega}:=(\mathcal{M}_{\omega},l_{\omega})$ is a lower bound of $(\mathcal{M}_n,l_n)_{n\in\omega}$.
	\end{mylem}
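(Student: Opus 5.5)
The plan is to check directly that $p_\omega=(\mathcal{M}_\omega,l_\omega)$ satisfies the two clauses of the definition of $\dP_f$. Once that is done, the fact that it lies below every $p_n$ is immediate: $\mathcal{M}_n\subseteq\mathcal{M}_\omega$ and $l_n\subseteq l_\omega$ by construction.

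For clause (1), $\mathcal{M}_\omega$ is a countable union of countable sets, so it is countable. Each of its elements belongs to some $\mathcal{M}_n$, and is therefore an $\omega_1$-sized, countably closed elementary substructure of $H(\omega_4)$ containing $f$. For the $\in$-linear ordering, take $M,N\in\mathcal{M}_\omega$ with $M\in\mathcal{M}_n$ and $N\in\mathcal{M}_m$. Because the sequence is descending, the sets $\mathcal{M}_k$ increase with $k$. Hence both $M$ and $N$ lie in $\mathcal{M}_{\max(n,m)}$, which is totally ordered by $\in$, so $M$ and $N$ are $\in$-comparable.

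For clause (2), the $l_n$ form a $\subseteq$-chain of functions. Their union $l_\omega$ is therefore a function, and it is countable as a countable union of countable sets. Now suppose $\alpha\neq\beta$ satisfy $l_\omega(\alpha)=l_\omega(\beta)$. Then $\alpha$ and $\beta$ both lie in $\dom(l_n)$ for some single $n$, and $l_n(\alpha)=l_n(\beta)$. Since $p_n\in\dP_f$, there is $M\in\mathcal{M}_n\subseteq\mathcal{M}_\omega$ with $\alpha\in M$ and $\beta\notin M$, which is the required separation.

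It remains to check that $l_\omega$ is good for $f$, and the same finite-support argument handles it. Fix $j<\omega_1$, and suppose $l_\omega^{-1}[\{j\}]$ fails to be $f$-rainbow. Then some four (or fewer) ordinals in this fiber witness the failure. All of them are in $\dom(l_n)$ for a single $n$, and there they still lie in $l_n^{-1}[\{j\}]$. This contradicts the assumption that $l_n$ is good for $f$.

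No step here is a genuine obstacle. The only point needing care is that every property involved is witnessed by finitely many objects, so each can be pushed down to a single $p_n$ using that the sequence is increasing in both coordinates.
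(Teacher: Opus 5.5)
Your proof is correct, and it is the routine verification the paper has in mind; the paper gives no proof and simply calls the lemma easy. Your key observation is that $\in$-comparability, the separation requirement and the rainbow condition are each witnessed by finitely many objects, so each one reduces to a single $p_n$; this is exactly what makes the union a condition below every $p_n$.
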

	
	\begin{mylem}
		Let $G$ be $\dP_f$-generic. In $V[G]$, define $l_G:=\bigcup\{l\;|\;(\mathcal{M},l)\in G\}$. Then $l_G\colon\omega_2^V\to\omega_1$ is a total function which is good for $f$. Hence, in $V[G]$, $\{l_G^{-1}[\{j\}]\;|\;j\in\omega_1\}$ is a partition of $\omega_2^V$ into $\omega_1$ many $f$-rainbow sets.
	\end{mylem}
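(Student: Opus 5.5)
The proof is a density argument followed by a check that goodness passes to unions over a filter. We handle the two halves of the lemma separately.

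\emph{Goodness of $l_G$.} First, $l_G$ is a function: any two conditions in $G$ have a common extension, whose $l$-part extends both partial functions, so they agree on common arguments. Suppose toward a contradiction that $l_G^{-1}[\{j\}]$ is not $f$-rainbow. Then there are pairs $a\neq b$ in $[\omega_2^V]^2$, contained in $l_G^{-1}[\{j\}]$, with $f(a)=f(b)$. Only finitely many ordinals are involved, so finitely many conditions in $G$ decide their $l_G$-values. A common extension $(\mathcal{M},l)\in G$ then has $a\cup b\subseteq l^{-1}[\{j\}]$. This contradicts clause (2), which requires $l$ to be good for $f$. So $l_G$ is good for $f$.

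\emph{Totality of $l_G$.} Work in $V$ and fix $\alpha<\omega_2$. We show that
$$D_\alpha=\{(\mathcal{M},l)\;|\;\alpha\in\dom(l)\}$$
is dense. Let $p=(\mathcal{M},l)$ with $\alpha\notin\dom(l)$. Since $l$ is countable, $\ran(l)\subseteq\omega_1$ is countable, so we can choose $j<\omega_1$ with $j\notin\ran(l)$. Set $q=(\mathcal{M},l\cup\{(\alpha,j)\})$.

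Goodness of the new function is immediate. The color class of $j$ is $\{\alpha\}$, and a singleton has no pairs, so it is trivially $f$-rainbow. All other color classes are unchanged.

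The separation clause also holds. The only new instances of $l(\alpha')=l(\beta')$ with $\alpha'\neq\beta'$ would involve $\alpha$, and none exist, since no other point has color $j$. Reading the clause as quantifying over $\alpha\neq\beta$, as intended, there is nothing new to check. The $\mathcal{M}$-part is untouched, so it is still a countable $\in$-chain of the required models.

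Hence $q\in\dP_f$ and $q\leq p$, so $D_\alpha$ is dense. Genericity gives $G\cap D_\alpha\neq\emptyset$, so $\alpha\in\dom(l_G)$ for every $\alpha<\omega_2^V$.

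\emph{The partition.} Since $l_G$ is total on $\omega_2^V$, the nonempty sets $l_G^{-1}[\{j\}]$ for $j<\omega_1$ partition $\omega_2^V$. Each of them is $f$-rainbow because $l_G$ is good. A small point is that ``$\omega_1$'' here means $\omega_1^V$. This is still $\omega_1$ in $V[G]$ because $\dP_f$ is countably closed by the previous lemma.

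We expect no real obstacle. The only subtle point is choosing a fresh color $j$ so that the separation requirement (2) is vacuous for the new point. Attempting to reuse an existing color would instead force us to add models to $\mathcal{M}$ that separate $\alpha$ from the other points of that color.
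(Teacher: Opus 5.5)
Your proof is correct and is the routine density-plus-filter argument the paper has in mind; the paper states this lemma without proof and calls it easy. A fresh color makes goodness and the separation clause vacuous for the new point, and any failure of rainbowness is witnessed by finitely many points already decided by a single condition in $G$. If you want the partition to have literally $\omega_1$ many nonempty pieces, the same fresh-point trick shows that, for each $j<\omega_1$, the conditions with $j$ in the range of $l$ are dense.
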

We need to show that $\omega_2^V$ is preserved as as cardinal. However, in our case, it is unclear whether $\dP_f$ is always proper for models of size $\omega_1$. To show that it is consistently $\aleph_1$-proper, we use ideals with the following property:
	
	\begin{mydef}
		Let $I$ be a ${<}\,\omega_2$-complete ideal on $\omega_2$, let $\alpha<\omega_1$ and let $\mathcal{X}$ be a collection of regressive functions from $\omega_2\to\omega_2$. We define the game $\Game^{\alpha}(I,\mathcal{X})$ as follows: The game lasts $\alpha$ many rounds with Player I starting the game. In round $\beta<\alpha$, player I plays $f_{\beta}\in\mathcal{X}$. Player II responds with an ordinal $\xi_{\beta}<\omega_2$. At the end of the game, player II wins if and only if there is a set $X\in I^+$ such that for every $\eta\in X$ and $\beta<\alpha$, $f_{\beta}(\eta)<\xi_{\beta}$.
		
		If $\mathcal{X}$ is the collection of all regressive functions from $\omega_2\to\omega_2$, we let $\Game^{\alpha}(I,\mathcal{X}):=\Game^{\alpha}(I)$.
	\end{mydef}
	The following is an illustration of a round of $\Game^{\alpha}(I,\mathcal{X})$:
	$$	
	\begin{matrix}
\mathrm{I} &  f_0\in \mathcal{X} & \quad & f_1\in \mathcal{X} & \quad &  \cdots & f_\beta \in \mathcal{X} & \quad & \cdots \\
\mathrm{II} &  \quad & \xi_0\in \omega_2 & \quad & \xi_1\in \omega_2 & \cdots &\quad &\xi_\beta\in \omega_2 &\cdots
\end{matrix}
	$$
Player II wins iff $$\{\eta\in \omega_2: \forall \beta<\alpha, f_\beta(\eta)<\xi_\beta\}\in I^+.$$
	\linebreak	
	We will investigate this game further in Section \ref{Section: Weak Ideal Game}. For now, we simply remark that ideals where II has a winning strategy can consistently exist. For example, if $\kappa$ is measurable and $G$ is $\Coll(\omega_1,<\kappa)$-generic, Laver (see \cite{GalvinJechMagidorIdealGame}) showed that, in $V[G]$, there is a normal, ${<}\,\omega_2$-complete ideal $I$ on $\omega_2$ such that $I^+$ contains a countably closed dense subset. It is straightforward to see that for every $\alpha<\omega_1$, II has a winning strategy in $\Game^{\alpha}(I)$.
	
	For now, assume $\CH$ holds and that for every $\omega_2$-sized collection $\mathcal{X}$ of regressive functions from $\omega_2$ into $\omega_2$ and every $\alpha\in \omega_1$, there exists a ${<}\,\omega_2$-complete ideal $I_{\mathcal{X}}$ on $\omega_2 \cap \cof(\omega_1)$ such that player II wins $\Game^{\alpha}(I_{\mathcal{X}}, \mathcal{X})$. Our goal is to show the following:
	
	\begin{mypro}\label{Proposition: Properness}
		Let $\Theta$ be sufficiently large and $M\prec H(\Theta)$ an $\omega_1$-sized, countably closed elementary submodel of $H(\Theta)$ containing relevant objects, including $\dP_f$. Suppose that $p=(\mathcal{M},l)\in\dP_f$ such that $M\cap H(\omega_4)\in\mathcal{M}$. Then $p$ is $(M,\dP_f)$-generic.
	\end{mypro}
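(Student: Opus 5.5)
The plan is the usual side-condition argument. Fix a dense open $D\subseteq\dP_f$ with $D\in M$ and an arbitrary $q=(\mathcal{N},l_q)\leq p$. We will find $r\in D\cap M$ compatible with $q$. Write $M^*=M\cap H(\omega_4)\in\mathcal{N}$ and $\delta=M\cap\omega_2$. Since $M$ is countably closed of size $\omega_1$, $\delta$ is an ordinal of cofinality $\omega_1$.

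\emph{Step 1 (restriction).} Define $q\restriction M:=(\mathcal{N}\cap M,\ l_q\restriction\delta)$. By countable closure of $M$, this pair lies in $M$. It is a condition because every model of $\mathcal{N}$ outside $M$ is either $M^*$ or contains $M^*$ as a subset (the chain is $\in$-ordered), so it contains both or neither of any two points below $\delta$. Hence separation for pairs below $\delta$ is already witnessed inside $\mathcal{N}\cap M$.

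\emph{Step 2 (amalgamation criterion).} Suppose $r=(\mathcal{N}_r,l_r)\leq q\restriction M$ with $r\in M$, and let $A=\operatorname{dom}(l_r)\setminus\operatorname{dom}(l_q)\subseteq\delta$ be its new points. Let $Y=\operatorname{dom}(l_q)\setminus\delta$. We claim $(\mathcal{N}_r\cup\mathcal{N},\,l_r\cup l_q)$ is a common extension, provided that for every colour $j$ the following hold:
\begin{itemize}
\item no $\alpha\in A\cap l_r^{-1}(j)$ lies in $F\bigl((l_q^{-1}(j)\cap\delta)\cup (Y\cap l_q^{-1}(j))\bigr)$;
\item no two points of $A\cap l_r^{-1}(j)$ lie in the same fibre of $f(\cdot,\gamma)$ for any $\gamma\in Y\cap l_q^{-1}(j)$.
\end{itemize}
Separation between new points and $Y$ is witnessed by $M^*$. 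Rainbowness follows by the argument of Lemma \ref{Lemma: Amalgamating Rainbow Sets}. By normality, a violation must have a common maximum $\gamma\in Y$ and at least one new minimum. The first clause excludes the case with exactly one new minimum, and the second clause excludes the case where both minima are new. Call the countable set of forbidden configurations below $\delta$ (the $F$-sets together with the fibre partition on them) the \emph{trace} $Z(q)$ of $q$'s tail. This trace is countable and hence bounded in $\delta$.

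\emph{Step 3 (reflection via the game).} This is the main obstacle. Merely knowing $Z(q)\in M$ is useless, because an extension into $D$ found inside $M$ might be forced to use points of $Z(q)$. So we reflect $q$'s tail down instead. In $M$, consider the set $E$ of $\eta\in\omega_2\cap\cof(\omega_1)$ for which there is a condition $q_\eta\leq q\restriction M$ whose part above $\eta$ is an isomorphic copy of $q$'s tail over the same countable part, with top new model having trace $\eta$. Then $\delta\in E$, so $E$ is large. Let $\mathcal{X}\in M$ be the $\omega_2$-sized family of regressive functions $\eta\mapsto$ the $\beta$-th element (under a fixed enumeration of type $<\omega_1$) of the trace $Z(q_\eta)$.

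We fix the ideal $I_{\mathcal{X}}\in M$ and a winning strategy for II in $\Game^{\alpha}(I_{\mathcal{X}},\mathcal{X})$, with $\alpha$ the relevant countable length. We first arrange that $E\in I_{\mathcal{X}}^+$; the point of the hypothesis is to get this from $\delta\in E$ via elementarity. Letting player I play these functions, II's answers $\xi_\beta$ bound every coordinate on an $I_{\mathcal{X}}^+$ set. By $\CH$ there are only $\omega_1$ possible countable traces bounded by the $\xi_\beta$, so ${<}\,\omega_2$-completeness gives a positive $E'\subseteq E$ on which the trace is a fixed $Z\in M$.

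\emph{Step 4 (finding $r$).} In $M$, use density of $D$ together with unboundedness of $E'$ to find $r\in D\cap M$ with $r\leq q\restriction M$. We arrange that $r$ is built from some $q_\eta$, $\eta\in E'$, so that its new points below $\eta$ respect the fixed configuration $Z$. Elementarity transfers the statement ``there are such $r$ compatible with $q_\eta$ for positively many $\eta\in E'$'' from $\delta$, which lies in the corresponding positive set relative to $M$, to a witness $r\in M$. Since $Z=Z(q)$, Step 2 applies and $r\parallel q$.

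I expect the delicate points to be two. The first is choosing the enumeration so that the regressive functions truly capture the forbidden configuration, including the fibre partition and not only the sets $F(\cdot)$. The second is justifying that $\delta$ witnesses $I_{\mathcal{X}}$-positivity, so that reflection from $\delta$ down into $M$ is legitimate.
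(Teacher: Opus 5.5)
Your Steps 1 and 2 are fine. The pair $q\uhr M$ is a condition lying in $M$, and your two-clause criterion for $(\mathcal{N}_r\cup\mathcal{N},l_r\cup l_q)$ being a common extension is correct. The gap is in Steps 3--4, and there are two problems.

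First, the ``trace'' is not countable. Your second clause depends on the whole fibre structure of $f(\cdot,\gamma)\uhr\delta$ for $\gamma\in Y$. That is an $\omega_1$-sized partition, not captured by any countable (hence bounded) subset of $\delta$. Two new points can share a fibre without either of them lying in $F(l_q^{-1}[\{j\}])$. The only way to control this is the separation requirement in $r$: new points of one colour lie at different levels of $\mathcal{N}_r$, so you add them one at a time via Lemma~\ref{Lemma: Amalgamating Rainbow Sets}. But then a later point must avoid $F(X\cup Y)$, where $X$ contains the \emph{earlier new} points. So the forbidden set is not a fixed object that $\CH$ plus completeness can freeze. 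It must be handled adaptively, level by level, and that is exactly what the countably many rounds of the game are for.

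Second, and more seriously, you reflect in the wrong direction. Your argument needs the \emph{actual} tail of $q$, sitting at $\delta$, to have the trace $Z$ that is fixed on an $I_{\mathcal{X}}$-positive set $E'$. Nothing provides this.
\begin{itemize}
\item $I_{\mathcal{X}}$ is an arbitrary ${<}\,\omega_2$-complete ideal on which II wins. From $\delta\in E$, elementarity only gives that $E$ is unbounded, not that it is positive.
\item The set $E'$ produced by the game need not contain $\delta$, so ``$Z=Z(q)$'' is unjustified.
\item Step 4 never explains how density of $D$ yields an $r$ whose new points avoid $q$'s real configuration. An arbitrary extension into $D$ may add anything.
\end{itemize}

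The paper needs no genericity of $\delta$, because it reflects \emph{upward} and argues by contradiction.
\begin{itemize}
\item Assume $p\in D$ and that no condition in $D\cap M$ is compatible with $p$. Let $B\in M$ be the set of pairs $(\alpha,k)$ such that no $(\mathcal{N},m)\in D$ below $p_M=p\uhr M$ that lives below $\alpha$ has $m\cup k$ good. This quantifies over all of $D$ at once.
\item Then $(\delta,l\setminus M)\in B$. Hence $M$ contains a sequence $(\delta_\beta,l_\beta)_{\beta<\omega_2}$ of members of $B$ with increasing domains.
\item Lemma~\ref{Lemma: Refining Sequence} plays the game along $p$'s \emph{own} tower $M\cap H(\omega_4)=M_0\in M_1\in\cdots$. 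The moves are $f_\beta(\eta)=\sup_{i<\delta^*}\bigl(\eta\cap F(k_\beta^{-1}[\{i\}]\cup l_\eta^{-1}[\{i\}])\bigr)$ with $k_\beta=l_M\cup(l^M\uhr M_\beta)$.
\item II's answers lie in $M_\beta$. So points of $l^M$ entering after $M_\beta$ automatically avoid the forbidden sets, and one obtains $\eta$ with $l_\eta\cup l$ good.
\item Then $p$ itself witnesses $(\delta_\eta,l_\eta)\notin B$, a contradiction.
\end{itemize}
The copy chosen from the positive set is the one placed on top, and the fixed object it must cohere with is $p$, not the other way round.
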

	
	The proof is based on a combinatorial lemma:
	
	\begin{mylem}\label{Lemma: Refining Sequence}
		Let $\gamma$ be a countable ordinal and let $(M_i)_{i<\gamma}$ be an $\in$-increasing sequence of $\omega_1$-sized countably closed elementary substructures of $H(\omega_4)$. Let $l_{\mathrm{low}}$, $l_{\mathrm{high}}$ be countable partial functions from $\omega_2$ to $\omega_1$ such that the following holds:
		\begin{enumerate}
			\item $f\in M_0$;
			\item $l_{\mathrm{low}}\cup l_{\mathrm{high}}$ is good for $f$;
			\item $l_{\mathrm{low}}\subseteq M_{0}$ and $l_{\mathrm{high}}\cap M_{0}=\emptyset$;
			\item whenever $\alpha,\beta\in\dom(l_{\mathrm{high}})$ are such that $l_{\mathrm{high}}(\alpha)=l_{\mathrm{high}}(\beta)$, there is $i\in\gamma$ such that $\alpha\in M_i$ and $\beta\notin M_i$.
		\end{enumerate}
		
		Let $(l_i)_{i<\omega_2}\in M_{0}$ be a sequence of countable partial functions from $\omega_2$ to $\omega_1$ such that the following holds:
		\begin{enumerate}
			\item For every $i<\omega_2$, $l_{\mathrm{low}}\cup l_i$ is good for $f$,
			\item for every $i<j<\omega_2$, $\dom(l_i)<\dom(l_j)$, and
			\item there is $\delta^*\in \omega_1$ such that for all $l\in \omega_1$, $l_i[\dom(l_i)]\subseteq \delta^*$.
		\end{enumerate}
		
		Let $\mathcal{X}$ be the set of all regressive functions on $\omega_2$ which are definable over $H(\omega_3)$ from $[\omega_2]^{\aleph_0} \cup \{(l_i)_{i<\omega_2}, f \}$. Then there is a set $X\in I_{\mathcal{X}}^+$ such that for every sufficiently large $\eta\in X$, $l_{\eta}\cup l_{\mathrm{low}}\cup l_{\mathrm{high}}$ is good for $f$.
	\end{mylem}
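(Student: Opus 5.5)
The plan is to reduce the goodness of $l_\eta\cup l_{\mathrm{low}}\cup l_{\mathrm{high}}$ to a countable list of avoidance requirements. I then meet these requirements by letting Player I play suitable functions from $\mathcal{X}$ in a run of $\Game^{\gamma}(I_{\mathcal{X}},\mathcal{X})$ of length $\gamma$, with one round (or $\omega$ many rounds, after reindexing) per model $M_i$. Player II answers according to a winning strategy.

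\textbf{Step 1: reduce to a countable list of forbidden configurations.} Since the domains of the $l_i$ are pairwise ordered and $\sup\dom(l_{\mathrm{high}})<\omega_2$, for all sufficiently large $\eta$ we have $\dom(l_{\mathrm{high}})\cup\dom(l_{\mathrm{low}})<\dom(l_\eta)$. Fix such $\eta$. By normality of $f$, a failure of goodness is witnessed by two pairs $\{\alpha,\beta\},\{\alpha',\beta\}$ with the same maximum $\beta$ and the same colour, where $\alpha,\alpha',\beta$ all get the same $l$-value. By hypotheses (1)--(2) on the $l_i$ and by (2) on $l_{\mathrm{low}}\cup l_{\mathrm{high}}$, any such witness must have $\beta\in\dom(l_\eta)$ and $\{\alpha,\alpha'\}\cap\dom(l_{\mathrm{high}})\neq\emptyset$. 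The colour bound $\delta^*$ and the countability of everything involved show that, for each $\eta$, the set
$$S_\eta:=\{\alpha<\min\dom(l_\eta): \exists \beta\in\dom(l_\eta)\,\exists\alpha'\neq\alpha\ (f(\alpha,\beta)=f(\alpha',\beta))\}$$
is countable, by $\omega$-boundedness. The requirement is then that $\dom(l_{\mathrm{high}})$ avoids the part of $S_\eta$ that would produce a monochromatic violation. These are pairs with $\alpha\in\dom(l_{\mathrm{high}})$ and $\alpha'\in\dom(l_\eta)\cup\dom(l_{\mathrm{low}})\cup\dom(l_{\mathrm{high}})$ of the same $l$-value. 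A bookkeeping of the colours below $\delta^*$ turns this into countably many requirements.

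\textbf{Step 2: encode the requirements as regressive functions in $\mathcal{X}$.} For a countable $x\subseteq\omega_2$ and $n\in\omega$, let $g_{x,n}(\eta)$ be the $n$-th element, in some canonical enumeration, of $S_\eta\cap\sup(x)$ intersected with the relevant colour pattern. Set $g_{x,n}(\eta)=0$ if this is undefined. For $\eta$ of cofinality $\omega_1$ this is regressive, since $S_\eta\cap\eta$ is countable and hence bounded below $\eta$. It is definable over $H(\omega_3)$ from $x$, $(l_i)_{i<\omega_2}$ and $f$, so it lies in $\mathcal{X}$. Since $(l_i)$, $f$ and $I_{\mathcal{X}}$ all lie in $M_0$, elementarity gives a winning strategy $\sigma\in M_0$ for II. Countable closure of each $M_i$ then guarantees that, as long as I's moves lie in $M_i$, the answers of $\sigma$ lie in $M_i\cap\omega_2$.

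\textbf{Step 3: run the game along the tower.} At round $i$, Player I plays the functions $g_{x,n}$ with $x$ ranging over suitable countable subsets of $M_i$, namely the traces in $M_i$ of the relevant data of $l_{\mathrm{high}}$ and $l_{\mathrm{low}}$. Countable closure puts these in $M_i$. Hypothesis (4) on $l_{\mathrm{high}}$, that points of the same colour class are separated by some $M_i$, is what should let II's bound $\xi_i\in M_i$ separate $S_\eta$ from the points of $\dom(l_{\mathrm{high}})$ lying in $M_{i+1}\setminus M_i$. Since $l_{\mathrm{high}}\cap M_0=\emptyset$, this applies to every point of $\dom(l_{\mathrm{high}})$. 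At the end, II's win yields $X\in I_{\mathcal{X}}^+$ on which all the $g$'s are bounded by the $\xi_i$. This gives the avoidance, and Step 1 then gives the goodness.

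\textbf{Main obstacle.} The hard step is Step 3: making the functions I plays genuinely lie in the $M_i$. The natural parameter is $\dom(l_{\mathrm{high}})$, which is not in any $M_i$, and I expect this to require the most care. My first attempt is to replace it by $M_i$-internal objects and use elementarity of $M_{i+1}$ downward. Concretely, I would show by induction on $i$ that the set of "bad" $\eta$ for the points of $\dom(l_{\mathrm{high}})\cap M_{i+1}$ is controlled by a function defined from a countable subset of $M_i$. The $\in$-chain and the separation condition (4) should guarantee that the colour classes of $l_{\mathrm{high}}$ interact with each $M_{i+1}\setminus M_i$ in a way describable from inside $M_i$.
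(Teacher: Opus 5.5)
The overall architecture of your proposal matches the paper's: a strategy $\sigma\in M_0$; player I's moves lying in $M_\beta$, so that the answers $\xi_\beta$ lie in $M_\beta$; and the reduction showing that any violation has its top point in $\dom(l_\eta)$ and involves a point of $\dom(l_{\mathrm{high}})$. The two load-bearing pieces, however, are wrong or missing.

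\textbf{$S_\eta$ is not countable.} $\omega$-boundedness bounds the size of each colour class at a given $\beta$, not the number of non-singleton classes, and your partner $\alpha'$ is unrestricted. A normal $2$-bounded colouring that gives $\lambda+2n$ and $\lambda+2n+1$ the same colour at every $\beta$ above them puts essentially all of $\min\dom(l_\eta)$ into $S_\eta$, and for large $\eta$ that set is uncountable. So the functions $g_{x,n}$ (``the $n$-th element'') are not well defined, and no bound on them controls membership in $S_\eta$. What is countable is $F(Z)$ for countable $Z$. The partner must range over a fixed countable set: for each colour $j<\delta^*$, over $k^{-1}[\{j\}]\cup l_\eta^{-1}[\{j\}]$, where $k$ is the part of $l_{\mathrm{low}}\cup l_{\mathrm{high}}$ already handled.

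\textbf{The open step in Step 3.} It is resolved by never putting $\dom(l_{\mathrm{high}})$ into a model. At round $\beta<\gamma$, let I play
\[ f_\beta(\eta)=\sup_{j<\delta^*}\bigl(\eta\cap F(k_\beta^{-1}[\{j\}]\cup l_\eta^{-1}[\{j\}])\bigr) \]
for $\cf(\eta)=\omega_1$, and $0$ otherwise. Here $k_\beta=l_{\mathrm{low}}\cup(l_{\mathrm{high}}\uhr M_\beta)$ is a countable subset of $M_\beta$, hence an element of $M_\beta$ by countable closure. Then $\xi_\beta\in M_\beta$. Countable closure also gives $\omega_1\subseteq M_\beta$, so $M_\beta\cap\omega_2$ is an ordinal, and every point of $\dom(l_{\mathrm{high}})\smallsetminus M_\beta$ lies above $\xi_\beta$. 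This, not hypothesis (4), is what separates.

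Hypothesis (4) does two other jobs. First, at each stage at most one new point $\zeta$ of each colour $j$ enters $k_\beta$. Second, for same-coloured $\zeta_0<\zeta_1$ in $\dom(l_{\mathrm{high}})$, there is $i$ with $\zeta_0\in M_i$ and $\zeta_1\notin M_i$. Consequently, a violation $f(\{\zeta_0,\rho\})=f(\{\zeta_1,\rho\})$ with $\rho\in\dom(l_\eta)$ is excluded because $\zeta_1\in F(k_i^{-1}[\{j\}]\cup l_\eta^{-1}[\{j\}])$, yet $\zeta_1>\xi_i>f_i(\eta)$. The later point must avoid an $F$-set generated partly by the earlier one, so no single static avoidance condition suffices.

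This is why the verification has to be an induction on $\beta$ showing that $l_\eta\cup k_\beta$ is good. Since $f_\alpha(\eta)<\xi_\alpha<\zeta<\eta$ for all $\alpha<\beta$, and $F$ commutes with increasing unions, we get
\[ \zeta\notin F\bigl(\textstyle\bigcup_{\alpha<\beta}k_\alpha^{-1}[\{j\}]\cup l_\eta^{-1}[\{j\}]\bigr). \]
Lemma~\ref{Lemma: Amalgamating Rainbow Sets} then inserts $\zeta$. Your ending of Step 3 (``II's win gives the avoidance, and Step 1 gives goodness'') skips exactly this stage-by-stage amalgamation.
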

	
	\begin{proof}
		First of all, note that clearly $|\mathcal{X}|=\omega_2$ since $|[\omega_2]^{\aleph_0}|=\omega_2$ and so $I_{\mathcal{X}}$ actually exists. Fix a winning strategy $\sigma$ for player II in $\Game^{\gamma}(I_{\mathcal{X}},\mathcal{X})$. Note that, since $\{H(\omega_3), f, (l_i)_{i<\omega_2}, \omega_2 ,\gamma\}\in M_0$, by elementarity, $\mathcal{X}\in M_0$ (and we may assume $I_{\mathcal{X}}\in M_0$) and so $\sigma\in M_0$.
		
		Let $\beta\in\gamma$. We define $k_{\beta}:=l_{\mathrm{low}}\cup (l_{\mathrm{high}}\uhr M_{\beta})\in M_\beta$ and let $f_{\beta}$ be defined as follows: If $\cof(\eta)\neq\omega_1$, $f_{\beta}(\eta)=0$. Otherwise,
		$$f_{\beta}(\eta):=\sup_{i<\delta^*}(\eta\cap F(k_{\beta}^{-1}[\{i\}]\cup l_{\eta}^{-1}[\{i\}]))$$
		$f_{\beta}$ is a supremum of a countable collection of ordinals below $\eta$ and thus $f_{\beta}$ is a regressive function. Furthermore, $f_{\beta}$ is definable from $(l_i)_{i<\omega_2}$, $f$ and the function $k_{\beta}$ which can be coded as a countable subset of $\omega_2$, so clearly $f_{\beta}\in\mathcal{X}\cap M_{\beta}$.
		
		Let $(f_{\beta},\xi_{\beta})_{\beta<\gamma}$ be a run of $\Game^{\gamma}(I_{\mathcal{X}},\mathcal{X})$ where II plays according to $\sigma$. For every $\beta<\gamma$, $(f_{\alpha})_{\alpha\leq\beta}\in M_{\beta}$ and thus $\xi_{\beta}\in M_{\beta}$ as well. Because $\sigma$ is a winning strategy, there is a set $X\in I_{\mathcal{X}}^+$ of ordinals with cofinality $\omega_1$ such that for every $\eta\in X$ and $\beta<\gamma$, $f_{\beta}(\eta)<\xi_{\beta}$. We may assume that for every $\eta\in X$, $\eta$ and $\dom(l_{\eta})$ are larger than $\sup_{i<\gamma}M_i\cap\omega_2$. Let us show that $X$ is as required.
		
		To this end, fix $\eta\in X$. We will show by induction on $\beta<\gamma$ that $m_{\beta}:=l_{\eta}\cup k_{\beta}$ is good for $f$. For $\beta=0$, this holds by assumption. Suppose the statement holds for all $\alpha<\beta$. Let $j<\delta^*$. It follows that there is at most one $\zeta\in M_{\beta}\smallsetminus\bigcup_{\alpha<\beta}M_{\alpha}$ such that $l_{\mathrm{high}}(\zeta)=j$. If there is no such $\zeta$, $m_{\beta}^{-1}[\{j\}]=\bigcup_{\alpha<\beta}m_{\alpha}^{-1}[\{j\}]$ which is $f$-rainbow by the inductive assumption. Otherwise, we have that $\zeta>\xi_{\alpha}$ for every $\alpha<\beta$, since $\xi_{\alpha}\in M_{\alpha}$. Since $\zeta<\eta$ and $f_{\alpha}(\eta)<\xi_{\alpha}$ by assumption, this implies $\zeta\notin F(k_{\alpha}^{-1}[\{j\}]\cup l_{\eta}^{-1}[\{j\}])$. Therefore, since $F$ preserves increasing unions, we have
		\begin{enumerate}
		\item $\bigcup_{\alpha<\beta}k_{\alpha}^{-1}[\{j\}]<\zeta<l_{\eta}^{-1}[\{j\}]$, 
		\item $\bigcup_{\alpha<\beta}k_{\alpha}^{-1}[\{j\}] \cup \{\zeta\}$ is $f$-rainbow since it is a subset of $(l_{\mathrm{low}}\cup l_{\mathrm{high}})^{-1}[\{j\}]$ which is $f$-rainbow,
		\item $\bigcup_{\alpha<\beta}k_{\alpha}^{-1}[\{j\}] \cup l_{\eta}^{-1}[\{j\}]$ is $f$-rainbow by the induction hypothesis, and
		\item $\zeta\notin F\left(\left(\bigcup_{\alpha<\beta}k_{\alpha}^{-1}[\{j\}]\right)\cup l_{\eta}^{-1}[\{j\}]\right)$
		\end{enumerate}
Hence, $k_{\beta}^{-1}[\{j\}]\cup l_{\eta}^{-1}[\{j\}]$ is $f$-rainbow by Lemma \ref{Lemma: Amalgamating Rainbow Sets}.
	\end{proof}
	
	\begin{proof}[Proof of Proposition \ref{Proposition: Properness}]
		Let $D\in M$ be an open dense subset of $\dP_f$. By strengthening $p$ if necessary, we may assume that $p\in D$. Let $l_M:=l\uhr M$ and $p_M:=(\mathcal{M}\cap M,l_M)$. Note that $l_M\in M$. Also let $\delta:=M\cap\omega_2$, $\gamma:=\sup(\dom(l_M))<\delta$ and $\delta^* := \sup l[\dom(l)]+1 \in \omega_1$.
		
		Suppose for the sake of contradiction that there is no $q\in D\cap M$ which is compatible with $p$. Let $B$ consist of all pairs $(\alpha,k)$ such that
		\begin{enumerate}
			\item $\alpha\in\omega_2$;
			\item $k$ is a countable partial function from $\omega_2$ to $\delta^*$ such that $\dom(k)$ is disjoint from $\alpha$ and $k\cup l_M$ is good for $f$;
			\item there is no condition $q=(\mathcal{N},m)\in D$ such that
			\begin{enumerate}
				\item $q\leq p_M$;
				\item $\dom(m)\subseteq\alpha$ and $N\cap\omega_2<\alpha$ for every $N\in\mathcal{N}$;
				\item $m\cup k$ is good for $f$.
			\end{enumerate}
		\end{enumerate}
		
		Let $l^M:=l\smallsetminus M$.
		
		\begin{myclaim}
			$(\delta,l^M)\in B$.
		\end{myclaim}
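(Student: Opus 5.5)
We verify the three clauses in the definition of $B$ for the pair $(\alpha,k)=(\delta,l^M)$. Clause (1) is immediate, since $\delta=M\cap\omega_2<\omega_2$.

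For clause (2), recall that $M$ is countably closed and $M\cap\omega_2=\delta$ is an ordinal. So $\dom(l^M)=\dom(l)\smallsetminus M$ consists of ordinals $\geq\delta$ and is disjoint from $\delta$. Its values lie below $\delta^*=\sup l[\dom(l)]+1$ by the choice of $\delta^*$. Since $l^M\cup l_M=l$ and $p$ is a condition, $l$ is good for $f$, so $l^M\cup l_M$ is good for $f$.

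Clause (3) is where the contradiction hypothesis enters. Suppose toward a contradiction that there is $q=(\mathcal{N},m)\in D$ with $q\leq p_M$, $\dom(m)\subseteq\delta$, $N\cap\omega_2<\delta$ for all $N\in\mathcal{N}$, and $m\cup l^M$ good for $f$. Here we do not require $q\in M$. We would like to show that $q$ and $p$ are compatible. Since $D$ is open, we may then extend to a common lower bound in $D$. This alone does not yet contradict the hypothesis, which speaks of conditions in $D\cap M$. So the cleaner route is to argue via elementarity and pull $q$ into $M$.

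Here is how. The parameters $p_M$, $D$, $l_M$ and $f$ all lie in $M$; in particular $p_M\in M$ by countable closure of $M$. Moreover $l^M$ can be replaced by a finite-information shadow. The only thing that matters for goodness of $m\cup l^M$ with $\dom(m)<\dom(l^M)$ is, for each colour $j<\delta^*$, the countably many pairs in $l^{M,-1}[\{j\}]$. However, $l^M\notin M$, so direct reflection fails. Instead, we use the given hypothesis that no $q\in D\cap M$ is compatible with $p$. For any $q'\in D\cap M$ with $q'\leq p_M$, the structures of $q'$ lie in $M$. Hence $q'$ and $p$ are compatible, via the amalgam $(\mathcal{N}'\cup\mathcal{M},m'\cup l)$, if and only if $m'\cup l^M$ is good for $f$ and the separation requirement holds.

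The separation requirement is automatic. Any two points with the same colour, one in $\dom(m')\subseteq M$ and one in $\dom(l^M)$, are separated by $M\cap H(\omega_4)\in\mathcal{M}$. Pairs inside $m'$ are separated by $\mathcal{N}'$, and pairs inside $l$ are separated by $\mathcal{M}$. The $\in$-chain condition also holds, because the elements of $\mathcal{N}'$ belong to $M$ and hence lie $\in$-below $M\cap H(\omega_4)$. They are compatible with the models of $\mathcal{M}\cap M$ since $q'\leq p_M$, and they lie below the models of $\mathcal{M}$ above $M$.

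The main obstacle is this: we must show that a witness $q$ to the failure of clause (3) yields a witness inside $M$. The definition of $B$ only quantifies over conditions with everything below $\alpha=\delta$, and such conditions need not belong to $M$. The planned fix is to note that a condition below $\delta$ has countable data, so by countable closure of $M$ (it is closed under countable sequences) we get $m\in M$ and each $N\in\mathcal{N}$ with $N\cap\omega_2<\delta$. The remaining step is to argue that such $N$ (of size $\omega_1$) are elements of $M$. We may not get this, but we only need $q\in D\cap M$ up to replacing $\mathcal{N}$. By elementarity applied in $M$ to the statement ``there is $q\in D$, $q\leq p_M$, with $l$-part equal to $m$'', we find such $q'\in M\cap D$ with the same function part $m$.

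The goodness of $m\cup l^M$ then gives that $q'$ and $p$ are compatible, as computed above. This contradicts the assumption that no condition in $D\cap M$ is compatible with $p$. Hence clause (3) holds, and $(\delta,l^M)\in B$.
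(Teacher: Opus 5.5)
Your proof is correct and follows the paper's argument. You check clauses (1) and (2) directly. For clause (3), you use countable closure of $M$ to get $m\in M$, and then reflect the witness $q$ into $M$ by elementarity with parameters $D$, $p_M$, $m$. Finally, you amalgamate via $(\mathcal{N}'\cup\mathcal{M},m\cup l^M)$, contradicting the assumption that no condition in $D\cap M$ is compatible with $p$. This makes precise the paper's ``without loss of generality $q\in M$'' step; the paper's extra remark that $\{N\cap\omega_2\;|\;N\in\mathcal{N}\}\in M$ is not actually needed, and your aside about a ``finite-information shadow'' of $l^M$ can simply be deleted.
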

		
		\begin{proof}
			Clearly, $\delta\in\omega_2$ and $l^M$ is a countable partial function from $\omega_2-\delta$ to $\delta^*$ such that $l_M\cup l^M=l$ is good for $f$. Suppose for the sake of contradiction that $(\delta,l^M)\not\in B$. There must be a condition $q=(\mathcal{N},m)\in D$ such that $q\leq p_M$, $\dom(m)\subseteq\delta$ and $N\cap\omega_2<\delta$ for every $N\in\mathcal{N}$ and $m\cup l^M$ is good for $f$. In particular, $\{N\cap\omega_2\;|\;N\in\mathcal{N}\}$ is an element of $M$. Since $M\prec H(\Theta)$ and $\Theta$ is sufficiently large, we may without loss of generality assume that $q\in M$. But then $(\mathcal{N}\cup\mathcal{M},m\cup l^M)$ witnesses that $q$ and $p$ are compatible, contradicting with our assumptions.
		\end{proof}
		
		So for every $\beta\in M\cap\omega_2$, there is $(\delta_\beta,l_\beta)\in B$ with $\beta<\delta_\beta$. By the elementarity of $M$, since $B\in M$, this holds for every $\beta\in\omega_2$. We can therefore construct a sequence $(\delta_{\beta},l_{\beta})_{\beta<\omega_2}$ of elements of $B$ such that for every $\beta_0<\beta_1$, $\delta_{\beta_1}>\max\{\delta_{\beta_0},\sup \dom(l_{\beta_0})\}$. By Lemma \ref{Lemma: Refining Sequence}, there is $B\subseteq\omega_2$ which is $I$-positive such that for every $\eta\in B$, $l_{\eta}\cup l$ is good for $f$. In particular, $B$ is unbounded in $\omega_2$ and therefore we can choose such an $\eta$ for which $\delta_{\eta}$ is above $\dom(l)$ and $M'\cap\omega_2$ for every $M'\in\mathcal{M}$. But then $p\in D$ itself witnesses that $(\delta_{\eta},l_{\eta})\notin B$, a contradiction.
	\end{proof}
	
	In particular, assuming $\CH$, there are stationarily many $M\prec H(\omega_4)$ which are countably closed and have size $\omega_1$. Thus, to summarize this section, we have shown the following:
	
	\begin{mysen}\label{Theorem: Adding Rainbow Set}
		Assume $\CH$ and suppose that for every $\omega_2$-sized collection $\mathcal{X}$ of regressive functions on $\omega_2$ and every $\alpha\in \omega_1$, there exists a ${<}\,\omega_2$-complete ideal $I_{\mathcal{X}}$ on $\omega_2$ such that $E_{\omega}^{\omega_2}\in I_{\mathcal{X}}$ and player II has a winning strategy in $\Game^{\alpha}(I_{\mathcal{X}}, \mathcal{X})$. Let $f\colon[\omega_2]^2\to\omega_2$ be a normal, $\omega$-bounded coloring on $\omega_2$.
		
		Then there is a poset $\dP_f$ such that the following holds:
		\begin{enumerate}
			\item $\dP_f$ is countably closed;
			\item $\dP_f$ preserves $\omega_2^V$;
			\item $\dP_f$ adds a total function $l\colon\omega_2\to\omega_1$ such that for every $i\in\omega_1$, $l^{-1}[\{i\}]$ is $f$-rainbow.
		\end{enumerate}
	\end{mysen}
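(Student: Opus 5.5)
Theorem \ref{Theorem: Adding Rainbow Set} is essentially a summary of the preceding lemmas, so the plan is to assemble them, with most of the work in clause (2).

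\textbf{Clause (1).} This is the countable closure lemma already stated: the coordinatewise union of a descending $\omega$-sequence of conditions is a lower bound. The one thing to check is clause (2) of the definition of $\dP_f$ in the union. If $l_\omega(\alpha)=l_\omega(\beta)$, then $\alpha$ and $\beta$ both lie in $\dom(l_n)$ for some fixed $n$. The separating model for $l_n$ then lies in $\mathcal{M}_n\subseteq\mathcal{M}_\omega$. Goodness of $l_\omega$ follows because any failure of rainbowness involves finitely many points, and these already appear in a single $l_n$.

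\textbf{Clause (3).} This is the genericity lemma. A density argument shows every $\alpha<\omega_2$ enters $\dom(l_G)$. Given $(\mathcal{M},l)$ and $\alpha\notin\dom(l)$, pick a color $j<\omega_1$ not in the countable range of $l$ and set $l'=l\cup\{(\alpha,j)\}$. This keeps $l'$ good trivially, since $l'^{-1}[\{j\}]=\{\alpha\}$, and the separation clause is vacuous for the new color.

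\textbf{Clause (2): main step.} Since $\dP_f$ is countably closed, $\omega_1$ is preserved and no new countable sequences are added. It therefore suffices to show $\omega_2^V$ is not collapsed. Suppose some $p$ forces $\dot g\colon\omega_1\to\omega_2^V$ to be onto. Take $\Theta$ large and $M\prec H(\Theta)$ of size $\omega_1$, countably closed, with $\omega_1\subseteq M$ and $p,\dot g,\dP_f\in M$. Such $M$ exist (indeed stationarily many) by $\CH$, as remarked before the theorem. Then strengthen $p$ to $p'=(\mathcal{M}\cup\{M\cap H(\omega_4)\},l)$, after checking that this is a condition.
\begin{itemize}
\item $M\cap H(\omega_4)$ is a countably closed, $\omega_1$-sized elementary substructure of $H(\omega_4)$ containing $f$.
\item Because $\mathcal{M}\in M$ is countable and $M$ is countably closed, $\mathcal{M}\subseteq M$. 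Hence every element of $\mathcal{M}$ lies in $M\cap H(\omega_4)$, and $\mathcal{M}\cup\{M\cap H(\omega_4)\}$ is still $\in$-linearly ordered.
\end{itemize}

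By Proposition \ref{Proposition: Properness}, which is where the hypothesis on the ideals $I_{\mathcal{X}}$ and the game $\Game^\alpha$ enters through Lemma \ref{Lemma: Refining Sequence}, $p'$ is $(M,\dP_f)$-generic. Genericity gives $p'\Vdash \dot g(\xi)\in M$ for each $\xi<\omega_1$. Thus $p'$ forces $\operatorname{ran}\dot g\subseteq M\cap\omega_2^V$, which is a bounded subset of $\omega_2^V$, contradicting surjectivity.

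\textbf{Main obstacle.} The only delicate point is making sure the hypotheses of Proposition \ref{Proposition: Properness} are met. This requires choosing $M$ containing all relevant objects and using that $E^{\omega_2}_\omega\in I_{\mathcal{X}}$, so positive sets concentrate on cofinality $\omega_1$ as used in Lemma \ref{Lemma: Refining Sequence}. It also requires that $\CH$ gives $|[\omega_2]^{\aleph_0}|=\omega_2$, so that the relevant $\mathcal{X}$ has size $\omega_2$. Once these are in place the assembly above is routine.
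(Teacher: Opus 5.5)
Your proposal is correct and follows the paper's own route. Clauses (1) and (3) are the two ``easy'' lemmas: countable closure via coordinatewise unions, and totality and goodness of $l_G$ by density. Clause (2) comes from adjoining $M\cap H(\omega_4)$ to the side conditions and invoking Proposition~\ref{Proposition: Properness}, which is where Lemma~\ref{Lemma: Refining Sequence} and the game hypothesis enter, with $\CH$ supplying the countably closed $\omega_1$-sized models $M$. The paper only notes that such $M$ exist stationarily often and leaves the preservation of $\omega_2^V$ implicit. You spell out the standard bounding argument for a name $\dot g\colon\omega_1\to\omega_2^V$ and verify that $p'$ is a condition, which is a faithful expansion of the same proof rather than a different approach.
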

	
	\section{$(\alpha,\delta)$-semiproperness}\label{section: semiproper}
	In this section, we focus on the global version of the main technical hypothesis of Theorem \ref{Theorem: Adding Rainbow Set}: the existence of a $<\omega_2$-complete ideal $I$ on $\omega_2$ such that player II wins $\Game^{\alpha}({I})$ for every countable ordinal $\alpha$. Later in Section \ref{Section: Weak Ideal Game}, we investigate the local version optimizing the large cardinal hypothesis used.
	
	In general, showing that such ideals can consistently exist is not problematic. For example, by the methods in \cite{GalvinJechMagidorIdealGame}, such a ideal exists in $V[\Coll(\omega_1,<\kappa)]$ whenever $\kappa$ is a measurable. However, in our intended model, where for every $\omega$-bounded coloring $c$ on $\omega_2$, there exists a \emph{closed} $c$-rainbow subset of order type $\omega_1$, we must combine our forcing from Theorem \ref{Theorem: Adding Rainbow Set} with a forcing notion due to Shelah adding a closed set of ordertype $\omega_1$ into a stationary subset of $E_{\omega}^{\omega_2}$. Such a poset cannot be proper, let alone countably closed.
	
	In this section, we introduce a combination of $\alpha$-properness and semiproperness which we call $(\alpha,\delta)$-semiproperness. This is  a weakening of countable closure, which is sufficient for our purpose, at the same time compatible with the club shooting forcing into a stationary subset of $E_{\omega}^{\omega_2}$.
	
	\begin{mydef}
		Let $\dP$ be a poset and $\delta$ a cardinal. We say that $\dP$ is \emph{$\delta$-semiproper} if for every large enough regular $\Theta$, every countable $M\prec H(\Theta)$ with $\dP\in M$ and every $p\in M\cap\dP$ there is a condition $q\leq p$ which is $(M,\dP)$-$\delta$-semigeneric, namely, $q\Vdash_{\dP}M[\dot{G}]\cap\delta=M\cap\delta$.
	\end{mydef}
	
	So the usual notion of \emph{semiproperness} is the same as $\omega_1$-semiproperness in this terminology. The notion of \emph{$\alpha$-properness} (\cite[Section 5.1]{AbrahamProperChapter}), which $(\alpha,\delta)$-semiproperness is modeled after, uses $\alpha$-towers of elementary substructures instead of single elementary substructures:
	
	\begin{mydef}
		Let $\alpha$ be a countable ordinal and $\overline{M}=(M_i)_{i<\alpha}$ a sequence of countable elementary substructures of $H(\Theta)$, where $\Theta$ is regular uncountable. We say that $\overline{M}$ is an \emph{$\alpha$-tower} if
		\begin{enumerate}
			\item For every limit $\delta<\alpha$, $M_{\delta}=\bigcup_{i<\delta}M_i$;
			\item For every successor $j<\alpha$,
			$$(M_i)_{i< j}\in M_j$$
		\end{enumerate}
	\end{mydef}
	
	\begin{mydef}\label{definition: (alpha,delta)-semiproper}
		Let $\dP$ be a poset, $\alpha$ a countable ordinal and $\delta$ a cardinal. We say that $\dP$ is $(\alpha,\delta)$-semiproper if for every sufficiently large $\Theta$, there is $x\in H(\Theta)$ such that for every $\alpha$-tower $(M_i)_{i<\alpha}$ of countable elementary substructures of $H(\Theta)$, the following holds: Whenever $\dP,x\in M_0$ and $p\in\dP\cap M_0$, there is $q\leq p$ such that $q$ is $(M_i,\dP)$-$\delta$-semigeneric for every $i<\alpha$.
	\end{mydef}
	
	We will call such a condition $((M_i)_{i<\alpha},\dP)$-$\delta$-semigeneric. If $\delta=\omega_1$, we may drop it.
	
	The connection between the notion of $(\alpha,\delta)$-semiproperness and our game $\Game^{\alpha}(I)$ comes from the consideration of the following game, reminiscent of the properness game:
	
	\begin{mydef}
		Let $\dP$ be a poset, $p\in\dP$, $\alpha$ a countable ordinal and $\delta$ a cardinal. The game $\Game P_{\delta}^{\alpha}(\dP,p)$ is played as follows: It lasts $\omega\cdot\alpha$ many rounds.  If $\gamma<\omega\alpha$ is even, player I plays a $\dP$-name $\dot{\xi}_{\gamma}$ for an ordinal in $\delta$. If $\gamma$ is odd, player II plays an ordinal $\xi_{\gamma}$ in $\delta$.
		
$$	
	\begin{matrix}
\mathrm{I} & \dot{\xi}_0 & \quad & \cdots & \dot{\xi}_{\omega} &  \quad & \cdots & \dot{\xi}_{\omega\cdot \beta} &  \quad & \cdots \\
\mathrm{II} &  \quad & \xi_1 & \cdots & \quad & \xi_{\omega+1} &\cdots &\quad & \xi_{\omega\cdot \beta+1} &\cdots
\end{matrix}
	$$		
		
		After $\omega\cdot\alpha$ many rounds, player II wins if there is a condition $q\leq_{\dP} p$ which forces that, for every limit ordinal $\gamma<\omega\alpha$,
		$$\{\dot{\xi}_{\gamma+n}\;|\;n\in\omega\}\subseteq\{\check{\xi}_{\gamma+n}\;|\;n\in\omega\}$$
	\end{mydef}
	
	This game was first considered by Shelah in \cite[Chapter XII]{ShelahProperImproper}. Using appropriate bookkeeping, we may allow player II to instead play a countable set of ordinals instead of a single one.
	
	\begin{mypro}\label{proposition: equivalence}
		Let $\dP$ be a poset, $\alpha$ be a countable ordinal and $\delta$ be a cardinal. The following are equivalent:
		\begin{enumerate}
			\item $\dP$ is $(\alpha,\delta)$-semiproper;
			\item for every $p\in\dP$, player II has a winning strategy in $\Game P_{\delta}^{\alpha}(\dP,p)$.
		\end{enumerate}
	\end{mypro}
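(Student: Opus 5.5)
We prove the two implications separately. Throughout we use the bookkeeping remark after the definition of $\Game P^{\alpha}_{\delta}(\dP,p)$: player II may answer each move with a countable set of ordinals below $\delta$ rather than a single one.

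\emph{(1)$\Rightarrow$(2).} Fix $\Theta$ large and the parameter $x$ witnessing $(\alpha,\delta)$-semiproperness, and fix $p\in\dP$. Player II builds, along the play, an $\alpha$-tower $(M_i)_{i<\alpha}$ of countable elementary submodels of $H(\Theta)$ with $\dP,x,p\in M_0$. Block $i$ of the game consists of the rounds $\omega\cdot i+n$.
\begin{itemize}
\item At the start of block $i$, player II chooses $M_i$ containing $(M_j)_{j<i}$ and all moves played so far; at limit $i$ she sets $M_i=\bigcup_{j<i}M_j$.
\item Inside block $i$ she fixes an enumeration of $M_i$. When I plays a name $\dot\xi_{\omega\cdot i+2n}$, she cannot simply ask that name to lie in $M_i$, since I's names are arbitrary.
\end{itemize}
The fix is to have II play $\xi_{\omega\cdot i+2n+1}$ as the $n$-th element of $M_i\cap\delta$ under a bookkeeping of $\omega\times\omega$. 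Then over the block II's answers exhaust $M_i\cap\delta$.

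The difficulty is that I's names need not belong to $M_i$, so semigenericity alone does not give $\dot\xi\in M_i$. To handle this, II should instead choose $M_i$ after seeing I's first move of the block. Better still, she incorporates the moves within the block by building $M_i$ as an increasing union $M_i=\bigcup_n M_i^n$, placing each new name of I into the next stage. She does not commit to the enumeration of $M_i\cap\delta$ in advance, but enumerates the ordinals of each $M_i^n\cap\delta$ as they appear, dovetailing.

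At the end, $(M_i)$ is an $\alpha$-tower (successor stages contain the earlier sequence) with every $\dot\xi_{\omega\cdot i+2n}\in M_i$. Condition (1) yields $q\le p$ that is $(M_i,\dP)$-$\delta$-semigeneric for all $i$. Then $q$ forces $\dot\xi_{\omega\cdot i+2n}\in M_i[\dot G]\cap\delta=M_i\cap\delta$, which is exactly the set of II's answers in block $i$. So II wins.

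\emph{(2)$\Rightarrow$(1).} Let $x$ be a function assigning to each $p$ a winning strategy $\sigma_p$ for II in $\Game P^{\alpha}_{\delta}(\dP,p)$, and let $(M_i)_{i<\alpha}$ be an $\alpha$-tower with $\dP,x\in M_0$ and $p\in M_0$.

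Player I plays the following run against $\sigma_p$. In block $i$ she enumerates, in order type $\omega$, all $\dP$-names for ordinals below $\delta$ that lie in $M_i$. We must check that II's answers lie in $M_i$. This holds at successor $i$, because $(M_j)_{j<i}\in M_i$ and the prior run is definable there, so the whole partial run is in $M_i$. For $i>0$, the partial run up to the current move depends on the enumerations of earlier $M_j$ and the current finite part, all in $M_i$ (for limit $i$, each finite initial segment of the block lies in some $M_j$). We also need the enumeration of $M_i$'s names to be coherent so that finite stages lie in $M_i$. Since $M_i$ is countable but not an element of itself, I cannot take an enumeration from $M_i$. The plan is to make I's move at stage $n$ depend only on finitely much, so that each partial play is a finite sequence of elements of $M_i$ together with the earlier blocks, which are in $M_i$ as $(M_j)_{j<i}\in M_i$. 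Then $\sigma_p\in M_i$ yields II's response in $M_i$.

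At limit blocks, $M_i=\bigcup_{j<i}M_j$, so any finite partial run lies in some earlier $M_j$ and the same argument works.

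The winning condition then gives $q\le p$ forcing that every name in $M_i$ evaluates into II's answers, which lie in $M_i\cap\delta$. Hence $q\Vdash M_i[\dot G]\cap\delta=M_i\cap\delta$ for all $i$, and $q$ is $((M_i)_{i<\alpha},\dP)$-$\delta$-semigeneric.

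The main obstacle is this elementarity bookkeeping: ensuring each finite partial run is an element of the relevant model, especially at limit blocks and given that the successor clause of the tower only provides $(M_j)_{j<i}\in M_i$. For (1)$\Rightarrow$(2), the corresponding obstacle is dovetailing the construction of $M_i$ with I's moves inside the block.
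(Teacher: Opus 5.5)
\textbf{Verdict.} Your direction (1)$\Rightarrow$(2) has a genuine gap at limit blocks. The direction (2)$\Rightarrow$(1) is essentially sound, apart from a harmless misstep noted at the end.

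\textbf{The gap in (1)$\Rightarrow$(2).} You let block $i$ (the rounds $\omega\cdot i+n$) be handled by the tower model $M_i$. For limit $i$, the definition of an $\alpha$-tower forces $M_i=\bigcup_{j<i}M_j$, and this model is already determined before block $i$ starts. Player I's names $\dot\xi_{\omega\cdot i+2n}$ in that block, however, are arbitrary. So your claim that every $\dot\xi_{\omega\cdot i+2n}$ lies in $M_i$ fails, and semigenericity says nothing about these names.

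Concretely, take $\alpha=\omega+1$ and $\delta\ge\omega_1$. II's answers in the blocks before $\omega$ exhaust $M_\omega\cap\delta$, so I can compute this set from the play. In block $\omega$, I then plays $\check\xi$ for some $\xi\in\delta\setminus M_\omega$. Your II only answers with elements of $M_\omega\cap\delta$ in that block, so no $q$ can witness a win. If instead you grow $M_i=\bigcup_n M_i^n$ at a limit $i$ to absorb these names, the sequence is no longer continuous, and (1) does not apply to it.

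\textbf{The missing idea: a shift.} This is what the paper does. II picks a model $M_\gamma$ at every move, containing that move and all earlier models, and answers with $\delta\cap M_\gamma$. Hypothesis (1) is then applied to $N_i=\bigcup_{j<\omega\cdot i}M_j$. The block starting at $\omega\cdot\beta$ is absorbed into the successor model $N_{\beta+1}$, which contains $(N_k)_{k\le\beta}$. The limit models $N_i$ are honest unions, and no block relies on them.

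To make this precise you need two further points.
\begin{enumerate}
\item You need a model $N_0\ni p,x,\dP$ fixed before the game starts.
\item When $\alpha$ is a successor, the last block lands in $N_\alpha$, so the hypothesis is really used for an $(\alpha+1)$-tower.
\end{enumerate}
The second point costs nothing, because $(\alpha,\delta)$-semiproperness implies $(\alpha+1,\delta)$-semiproperness. This is trivial when $\alpha$ is a limit. For successor $\alpha$, note that $(N_i)_{i<\alpha}\in N_\alpha$. By elementarity there is $q\in N_\alpha$ that is semigeneric for all $N_i$ with $i<\alpha$. Extend $q$ to an $N_\alpha$-semigeneric condition; one-model $\delta$-semiproperness follows from (1) by putting $N_\alpha$ at the bottom of a tower.

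\textbf{A minor error in (2)$\Rightarrow$(1).} Your handling of limit blocks is incorrect, though harmlessly so. The partial run at a limit block $i$ is not finite. It contains all earlier blocks and essentially codes $(M_j)_{j<i}$, which is not an element of $\bigcup_{j<i}M_j$. So II's answers there need not lie in $M_i$.

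You do not need them. Semigenericity for a limit $M_i$ follows from semigenericity for the $M_j$ with $j<i$, since $M_i[G]=\bigcup_{j<i}M_j[G]$. So let I play trivially in limit blocks. Also choose the enumerations canonically, e.g.\ from a well-order placed in $M_0$ via $x$, so that the run before a successor block $i$ is definable from $(M_j)_{j<i}\in M_i$. With these changes your unshifted version of this direction works.
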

	
	\begin{proof}
		First suppose that $\dP$ is $(\alpha,\delta)$-semiproper and $p\in\dP$. We define a winning strategy for II.
		
		Assume I has played $(\dot{\xi}_{\beta})_{\beta\leq\gamma}$ so far. On the side, II has constructed a continuous sequence $(M_i)_{i<\gamma}$ of countable elementary submodels $H(\Theta)$ where $\Theta$ is a sufficiently large regular cardinal such that $p\in M_0$ and, for every successor $i<\gamma$, $(M_j)_{j<i}\in M_i$. II now finds $M_{\gamma}$ such that $p,\dot{\xi}_{\gamma},(M_j)_{j<\gamma}\in M_{\gamma}$ and plays $\delta\cap M_{\gamma}$.
		
		Now let $(N_i)_{i<\alpha}$ be defined as follows: For every $i<\alpha$, $N_i:=\bigcup_{j<\omega\cdot i}M_j$.
		
		\begin{myclaim}
			$(N_i)_{i<\alpha}$ is an $\alpha$-tower.
		\end{myclaim}
		
		\begin{proof}
			Clearly, $(N_i)_{i<\alpha}$ is continuous. Let $i<\alpha$ be a successor, $i=i'+1$. Then $(N_j)_{j<i}=(N_j)_{j\leq i'}$ can be constructed from $(M_j)_{j\leq \omega \cdot i'}$ which is an element of $M_{\omega\cdot i'+2}\subseteq N_i$.
		\end{proof}
		
		So, by assumption, $p\in\dP\cap N_0$. Thus, since $\dP$ is $(\alpha,\delta)$-semiproper, we can find $q\leq p$ which is $((N_i)_{i<\alpha},\dP)$-$\delta$-semigeneric for every $i<\alpha$. We claim that $q$ is as required. Let $\gamma=\omega\cdot \beta$ be a limit and $n<\omega$. Then $\dot{\xi}_{\gamma+n}\in M_{\gamma+\omega}=N_{\beta+1}$. Since $q$ is $(N_{\beta+1},\dP)$-$\delta$-semigeneric, $q$ forces that $\dot{\xi}_{\gamma+n}\in N_{\beta+1}[\dot{G}]\cap \delta = M_{\gamma+\omega} \cap \delta$. In particular, $q$ forces that $\dot{\xi}_{\gamma+n}$ is in $M_i$ for some $i<\omega(\beta+1)=\omega\beta+\omega=\gamma+\omega$. As a result, the strategy we defined for player II is indeed a winning strategy.
		
		Now suppose that II has a winning strategy $\sigma_p$ in $\Game P_{\delta}^{\alpha}(\dP,p)$ for every $p\in\dP$. Let $(M_i)_{i<\alpha}$ be an $\alpha$-tower and $p\in\dP\cap M_0$. By choosing $\Theta$ sufficiently large, $p,\dP\in M_0$ implies that the winning strategy $\sigma=\sigma_p$ is an element of $M_0$. We now play a game of $\Game P_{\delta}^{\alpha}(\dP,p)$ by letting I successively play, between $\omega\gamma$ and $\omega(\gamma+1)$, all names for ordinals in $\delta$ which are in $M_{\gamma+1}$, based on some fixed bijection $e_{\gamma+1}: \omega \to M_{\gamma+1}$. By the fact that $\sigma, (M_i)_{i\leq \gamma}, (e_{i})_{i\leq \gamma}\in M_{\gamma+1}$, every response $\xi_{\omega\gamma+n}$ is in $M_{\gamma+1}$. Now let $q\leq p$ witness that II wins the game. It follows that $q$ forces that, for every name $\dot{\xi}\in M_{\gamma+1}\cap \delta$, the evaluation of $\dot{\xi}$ has been played at some stage $\omega\gamma+n$ and thus lies in $M_{\gamma+1}\cap \delta$. Therefore, $q$ is $(M_i,\dP)$-$\delta$-semigeneric for every ordinal $i<\alpha$ as required.
	\end{proof}
	
	The following definition and proposition explains why the notion of \linebreak
	$(\alpha,\delta)$-semiproperness is connected to our endeavor.
	
	\begin{mydef}
		Let $\kappa$ be a cardinal and $I$ an ideal over $\kappa$. The poset $P(\kappa)/I$ consists of all equivalence classes $[A]_I$ where $A\in I^+$. The order of the poset is: $[B]_I\leq[A]_I$ iff $B\smallsetminus A\in I$.
	\end{mydef}
	
	\begin{mypro}\label{Proposition: Ideal Game}
		Let $I$ be a ${<}\,\omega_2$-complete normal ideal on $\omega_2$ and $\alpha$ be a countable ordinal. Suppose that $P(\omega_2)/I$ is $(\alpha,\omega_2)$-semiproper. Then II has a winning strategy in $\Game^{\alpha}(I)$.
	\end{mypro}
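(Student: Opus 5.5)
Consider the generic ultrapower. Let $G\subseteq P(\omega_2)/I$ be generic and form $j\colon V\to \mathrm{Ult}(V,G)$. By normality, $\omega_2^V$ is represented by the identity. For a regressive $g$, the ordinal $j(g)(\omega_2^V)=[g]_G$ is below $\omega_2^V$, so $[g]_G$ is an ordinal $<\omega_2$ in $V[G]$. We fix a $P(\omega_2)/I$-name $\dot\xi_g$ for $[g]_{\dot G}$. By normality (Fodor for $I$), the following equivalence holds for $A\in I^+$ and $\xi<\omega_2$:
$$[A]_I\Vdash \dot\xi_g<\check\xi \iff \{\eta\in A: g(\eta)\ge\xi\}\in I.$$

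Player II's strategy in $\Game^\alpha(I)$ is built from a winning strategy in $\Game P^{\alpha}_{\omega_2}(P(\omega_2)/I,[\omega_2]_I)$, which exists by Proposition \ref{proposition: equivalence}. That strategy might be called $\tau$. The two games have different lengths, $\alpha$ versus $\omega\cdot\alpha$, so we use the convention noted before Proposition \ref{proposition: equivalence}: II may answer each name with a countable set of ordinals in $\omega_2$. Equivalently, we interleave, so that in the block $[\omega\beta,\omega\beta+\omega)$ the simulated Player I plays $\dot\xi_{f_\beta}$ and then repeats it, and the simulated II produces countably many ordinals $\xi_{\omega\beta+n}$. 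When I plays $f_\beta$ in $\Game^\alpha(I)$, II feeds $\dot\xi_{f_\beta}$ into block $\beta$ of the simulated game and answers
$$\xi_\beta:=\sup_n \xi_{\omega\beta+2n+1}+1<\omega_2.$$
This is possible because $\omega_2$ is regular. The simulated game only requires the blocks to be completed in order, so each response depends only on the moves made so far.

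At the end of a run, the winning condition for $\tau$ gives some $[X]_I\le[\omega_2]_I$, with $X\in I^+$, forcing that for every $\beta<\alpha$ we have $\dot\xi_{f_\beta}\in\{\xi_{\omega\beta+n}:n<\omega\}$, hence $\dot\xi_{f_\beta}<\xi_\beta$. By the displayed equivalence, $Z_\beta:=\{\eta\in X: f_\beta(\eta)\ge\xi_\beta\}\in I$ for each $\beta$. Since $\alpha$ is countable and $I$ is $<\omega_2$-complete, $\bigcup_\beta Z_\beta\in I$. Therefore $X\smallsetminus\bigcup_\beta Z_\beta\in I^+$, and on this set every $f_\beta$ lies below $\xi_\beta$. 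So II wins.

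The main obstacle is the displayed equivalence. One direction uses the fact that if $\{\eta\in A: g(\eta)\ge\xi\}\in I^+$, then it forces $[g]\ge\xi$, which holds by Łoś. The other direction uses the fact that if that set is in $I$, then $A$ forces $\{\eta: g(\eta)<\xi\}\in G$. Both directions require only that the forcing relation for $[g]_{\dot G}$ is computed by Łoś's theorem on $G$, which needs $G$ to be a $V$-ultrafilter; genericity ensures this. The strategy also has to be arranged so that the names depend only on earlier moves, which the interleaving handles.
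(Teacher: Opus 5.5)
Your argument is correct and is essentially the paper's proof. It uses names $\dot\xi_f$ for the value on which a regressive $f$ is constant on a set in the generic ultrafilter; you only need Łoś's theorem for the $V$-ultrafilter $G$ here, not well-foundedness of the full ultrapower, since normality and ${<}\,\omega_2$-completeness identify the part below $[\mathrm{id}]$ with $\omega_2^V$. It then feeds $\dot\xi_{f_\beta}$ throughout block $\beta$ into a winning strategy for $\Game P^{\alpha}_{\omega_2}(P(\omega_2)/I,[\omega_2]_I)$ from Proposition~\ref{proposition: equivalence}, and uses ${<}\,\omega_2$-completeness to discard the countably many exceptional sets. Your ``$+1$'' in $\xi_\beta$ is in fact a small improvement: the paper answers with the bare supremum $\zeta_\gamma$, which only yields $f_\beta(\eta)\le\zeta_\beta$ when the supremum is attained, whereas $\Game^{\alpha}(I)$ demands strict inequality.
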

	
	\begin{proof}
		Let $G$ be any $P(\omega_2)/I$-generic filter over $V$.
		
		\begin{myclaim}
			In $V[G]$, there is a filter $U$ over $\omega_2^V$ such that:
			\begin{enumerate}
				\item For every $A\subseteq\omega_2^V$ in $V$, either $A\in U$ or $\omega_2^V\smallsetminus A\in U$;
				\item whenever $A\in I$, $\omega_2^V\smallsetminus A\in U$;
				\item whenever $f\in V$ is a regressive function on $\omega_2^V$, $f$ is constant on a set in $U$.
			\end{enumerate}
		\end{myclaim}
		
		\begin{proof}
			Let $U$ consist of all $A$ such that $[A]_I\in G$. It is easy to see that $U$ is a filter over $\omega_2^V$ and that for every $A\subseteq\omega_2^V$, either $A\in U$ or $\omega_2^V\smallsetminus A\in U$.
			
			To see (2), if $A\in I$, $\omega_2^V\smallsetminus A$ contains every subset of $\omega_2^V$ modulo $I$. Thus, $[\omega_2^V\smallsetminus A]\in G$.
			
			To see (3), if $f$ is regressive and $[A]_I\in P(\omega_2)/I$ is arbitrary, there is $B\subseteq A$ in $I^+$ such that $f$ is constant on $B$. Ergo, the set of all $[B]_I$ such that $f$ is constant on $B$ is dense in $P(\omega_2)/I$, which easily implies the desired statement.
		\end{proof}
		
		Let $f\in V$ be a regressive function on $\omega_2^V$. Then $f$ is constant on a set in $U$ with some value $\xi$. Since $U$ is a filter, there is exactly one $\xi$ such that $\{\eta\in\omega_2^V\;|\;f(\eta)=\xi\}\in U$. Let $\dot{\xi}_f$ be a name for this $\xi$.
		
		By Proposition \ref{proposition: equivalence}, we  can fix a winning strategy for player II in $\Game P^{\alpha}(P(\omega_2)/I, [\omega_2]_I)$. We define a winning strategy for player II in $\Game^{\alpha}(I)$. Suppose player I has played $(f_{\beta})_{\beta\leq\gamma}$ so far. We turn this into a partial play $(\dot{\xi}_i)_{i<\omega(\gamma+1)}$ by defining $\dot{\xi}_i:=\dot{\xi}_{f_{\beta}}$ for all $i\in[\omega\beta,\omega(\beta+1))$. Let $(\xi_i)_{i<\omega(\gamma+1)}$ be the sequence of responses of player II according to their winning strategy in $\Game P^{\alpha}_{\omega_2}(P(\omega_2)/I, [\omega_2]_I)$ and let player II play the ordinal
		$$\zeta_{\gamma}:=\sup\{\xi_i\;|\;i\in[\omega\gamma,\omega(\gamma+1))\}$$
		back in the game $\Game^{\alpha}(I)$.
		
		We show that this is a winning strategy for player II. Let $(f_{\beta},\zeta_{\beta})_{\beta<\alpha}$ be a run of the game $\Game^{\alpha}(I)$ where player II played according to the strategy. Let $(\dot{\xi}_i,\xi_i)_{i<\omega\alpha}$ be the corresponding run of $\Game P^{\alpha}_{\omega_2}(P(\omega_2)/I, [\omega_2]_I)$. Since player II played this game according to a winning strategy, there is a condition $[A]_I\in P(\omega_2)/I$ forcing that for every $\beta<\alpha$,
		$$\{\dot{\xi}_i\;|\;i\in[\omega\beta,\omega(\beta+1))\}\subseteq\{\xi_i\;|\;i\in[\omega\beta,\omega(\beta+1))\}$$
		In particular, $[A]_I$ forces that, for every $\beta<\alpha$, the function $\check{f}_{\beta}$ is constant on a set in $\dot{U}$ with value equal to some $\xi_i$ for $i\in[\omega\beta,\omega(\beta+1)))$ and thus with value bounded by $\zeta_{\beta}$.
		
		\begin{myclaim}
			For every $\beta<\alpha$,
			$$A_{\beta}:=\{\eta\in A\;|\;f_{\beta}(\eta)\geq\zeta_{\beta}\}\in I$$
		\end{myclaim}
		
		\begin{proof}
			Assume toward a contradiction that $A_{\beta}\in I^+$. In particular, $[A_{\beta}]\leq[A]$. However, if $G$ is $P(\omega_2)/I$-generic containing $[A_{\beta}]$, $A_{\beta}$ is in the filter induced by $G$. But then $f_{\beta}$ cannot be constant on a set in $U$ with value bounded by $\zeta_{\beta}$, a contradiction.
		\end{proof}
		
		Now, let
		$$A^*:=A\smallsetminus\bigcup_{\beta<\alpha}A_{\beta}$$
		Since $I$ is ${<}\,\omega_2$-complete, $\bigcup_{\beta<\alpha}A_{\beta}\in I$ and so $A^*\in I^+$. It follows that for every $\beta<\alpha$ and $\eta\in A^*$, $f_{\beta}(\eta)<\zeta_{\beta}$ as required.
	\end{proof}
	
	In our main theorem, we will not use precisely this lemma, since we need to work inside some inner model. Instead we will use the following variant. For the purposes of this paper, an \emph{iteration} $(\dB_i)_{i<\gamma}$ is defined to be any sequence of complete Boolean algebras such that for all $i<j<\gamma$, $\dB_i$ is a complete suborder of $\dB_j$.
	
	\begin{mylem}\label{Lemma: Ideal Existence}
		Let $j\colon V\to M$ be an ultrapower embedding by a normal measure on $\kappa$, $\alpha$ be a countable ordinal and $(\dB_i)_{i<\kappa+1}$ an iteration such that
		\begin{enumerate}
			\item For every $i<\kappa$, $|\dB_i|<\kappa$;
			\item $\dB_{\kappa}$ is the direct limit of $(\dB_i)_{i<\kappa}$;
			\item In $M$, $\dB_{\kappa}$ forces that $j(\dB_{\kappa})/\dB_{\kappa}$ is $(\alpha,\kappa)$-semiproper.
		\end{enumerate}
		
		Let $G$ be $\dB_{\kappa}$-generic. In $V[G]$, there exists a $<\kappa$-complete normal ideal $I$ on $\kappa$ such that player II has a winning strategy in the game $\Game^{\alpha}(I)$.
	\end{mylem}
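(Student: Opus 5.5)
We follow the standard ``generic embedding'' construction of precipitous-type ideals from a measurable cardinal. In $V[G]$ we define
$$I:=\{A\subseteq\kappa\;|\;\exists p\in G\ \ p\Vdash_{j(\dB_\kappa)/\dB_\kappa}\kappa\notin j(\dot A)\},$$
where $\dot A$ is a $\dB_\kappa$-name for $A$. Since $\dB_\kappa$ is the direct limit of small algebras and $j(\dB_\kappa)\uhr\kappa=\dB_\kappa$ (by (1), (2) and $\mathrm{crit}(j)=\kappa$), any generic $H$ for $j(\dB_\kappa)/G$ over $V[G]$ lets us lift $j$ to $j^+\colon V[G]\to M[G*H]$. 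Hence $A\in I^+$ iff some condition of the quotient forces $\kappa\in j^+(A)$. The standard verifications then go through. $I$ is $<\kappa$-complete because $\crit(j)=\kappa$, so $j^+(\bigcup_{i<\mu}A_i)=\bigcup_{i<\mu}j^+(A_i)$. $I$ is normal because for regressive $h$ we have $j^+(h)(\kappa)<\kappa$, so $j^+(h)(\kappa)=\xi$ for some $\xi$, and $j^+(h^{-1}\{\xi\})\ni\kappa$. Moreover the map $[A]_I\mapsto\|\kappa\in j^+(\dot A)\|$ is a dense embedding of $P(\kappa)/I$ into (a part of) the quotient $j(\dB_\kappa)/G$. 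We also need $\kappa=\omega_2^{V[G]}$ in the intended application; the argument itself does not need it, as it only uses that $I$ lives on $\kappa$.

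Rather than transfer semiproperness through this dense embedding and invoke Proposition~\ref{Proposition: Ideal Game} (which is risky since the embedding may not be onto a dense set of the whole quotient, only below a condition), we mimic that proof directly. For a regressive $h\in V[G]$ on $\kappa$, let $\dot\xi_h$ be the quotient-name for $j^+(h)(\kappa)<\kappa$. By (3) and Proposition~\ref{proposition: equivalence} applied in $M[G]$, player II has a winning strategy $\tau$ in $\Game P^{\alpha}_{\kappa}(j(\dB_\kappa)/G,1)$ inside $M[G]$. Since $M$ is closed under $\kappa$-sequences in $V$ and $\dB_\kappa$ is $\kappa$-c.c. (direct limit of small algebras at an inaccessible), $M[G]$ is closed under $\omega$-sequences in $V[G]$, so countable plays in $V[G]$ are in $M[G]$. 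A strategy in $M[G]$ remains a strategy in $V[G]$, and winning is witnessed by a condition $q$ of the quotient, a property that is absolute between $M[G]$ and $V[G]$ because the forcing relation for names in $M[G]$ is computed in $M[G]$, and the names $\dot\xi_h$ lie in $M[G]$ (as $h$ does, $h\in H(\kappa^+)^{V[G]}\subseteq M[G]$).

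The strategy in $\Game^\alpha(I)$ is then exactly as in Proposition~\ref{Proposition: Ideal Game}. When I plays $h_\beta$, feed $\dot\xi_{h_\beta}$ $\omega$ times into the block $[\omega\beta,\omega(\beta+1))$, let $\tau$ answer, and respond with $\zeta_\beta$, the supremum of those answers (which is $<\kappa$ by regularity). At the end the play is in $M[G]$, so $\tau$ yields $q$ forcing $\dot\xi_{h_\beta}<\zeta_\beta$ for all $\beta<\alpha$. Set $X:=\{\eta<\kappa\;|\;\exists r\le q\ \ r\Vdash j^+(\dot X)\ni\kappa\}$, or more concretely $X:=\{\eta\;|\;\forall\beta<\alpha\ h_\beta(\eta)<\zeta_\beta\}$. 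Then $j^+(X)=\{\eta\;|\;\forall\beta\ j^+(h_\beta)(\eta)<\zeta_\beta\}$, using $j(\zeta_\beta)=\zeta_\beta$ and $j^+((h_\beta)_\beta)=(j^+(h_\beta))_\beta$ since $\alpha<\kappa$. So $q$ forces $\kappa\in j^+(X)$, whence $X\in I^+$ and II wins.

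The main obstacle is the bookkeeping in the second step: making sure the quotient forcing and the names $\dot\xi_h$ live in $M[G]$ and that countable plays made in $V[G]$ are available in $M[G]$, so that the winning strategy given by hypothesis (3) (formulated in $M$) applies. This uses ${}^\kappa M\subseteq M$ together with the $\kappa$-c.c. of $\dB_\kappa$, and we would verify both carefully.
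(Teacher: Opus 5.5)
Your proposal is correct and follows the paper's proof essentially step by step. It uses the same ideal (the sets $A$ for which the quotient forces $\kappa\notin j(\dot A)$), the same winning strategy for $\Game P^{\alpha}_{\kappa}(j(\dB_\kappa)/G,1)$ taken inside $M[G]$, the same device of feeding $j(\tau_{h_\beta})(\kappa)/G$ into an $\omega$-block and answering with the supremum, and the same closure argument (${}^{\kappa}M\subseteq M$ plus the $\kappa$-c.c.\ of $\dB_\kappa$) to keep countable plays in $M[G]$.

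Two small repairs, both shared with the paper's write-up, are needed. First, $q$ only forces $\dot\xi_{h_\beta}\le\zeta_\beta$, since the supremum may be attained (e.g.\ if all $\omega$ answers coincide), so II should answer $\zeta_\beta+1$. Second, the $\kappa$-c.c.\ of $\dB_\kappa$ does not follow merely from being a direct limit of small algebras (it can fail, e.g.\ for full-support iterations). It follows instead from the complete embedding of $\dB_\kappa$ into $j(\dB_\kappa)$ presupposed in (3): a maximal antichain $A\subseteq\bigcup_{i<\kappa}\dB_i$ of size $\kappa$ would be a proper subset of $j(A)$ yet still maximal in $j(\dB_\kappa)$, a contradiction.
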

	
	\begin{proof}
	Let $G\subseteq \dB_\kappa$ be generic over $V$. By the assumption, in $M[G]$, $j(\dB_{\kappa})/G$ is $(\alpha,\kappa)$-semiproper. We define the ideal $I$ as follows: Let $A\subseteq\kappa$, $A=\tau^G$ for some $\dB_{\kappa}$-name $\tau$. It follows that $j(\tau)$ is a $j(\dB_{\kappa})$-name. Let $j(\tau)/G$ be the corresponding $j(\dB_{\kappa})/G$-name. We let $A\in I$ if $\Vdash_{j(\dB_{\kappa})/G}\check{\kappa}\in j(\tau)/G$.
		
		We show that $I$ is as required. The fact that $I$ is normal and $<\kappa$-complete is immediate from the properties of $j$. We verify that player II has a winning strategy in the game $\Game^{\alpha}(I)$. To this end, fix a winning strategy $\sigma$ for player II in $\Game P^{\alpha}_\kappa(j(\dB_{\kappa})/G, 1_{j(\dB_{\kappa})/G})$ in $M[G]$.
		
		Let $f\colon\kappa\to\kappa$ be a regressive function in $V[G]$ and let $\tau_f$ be a $\dB_{\kappa}$-name such that $\tau_f^G=f$. It follows that $j(\tau_f)\in M$ and thus the $j(\dB_{\kappa})/G$-name for an ordinal in $\kappa$, $j(\tau_f)(\kappa)/G$ is in $M[G]$.
		
		Suppose we are in a run of the game where player I has played $(f_{\beta})_{\beta\leq\gamma}$. Let $i<\omega(\gamma+1)$ and let $\beta\leq\gamma$ be the unique ordinal such that $i=\omega\beta+n$ for $n\in\omega$. Define $\dot{\xi}_i:=j(\tau_{f_{\beta}})(\kappa)/G$. For every $i<\omega(\gamma+1)$, $\dot{\xi}_i\in M[G]$. Since $\dB_{\kappa}$ is $\kappa$-cc, $M[G]$ is closed under $\kappa$-sequences and thus $(\dot{\xi}_i)_{i<\omega(\gamma+1)}\in M[G]$. Then we proceed exactly as in the proof of Proposition \ref{Proposition: Ideal Game} in $M[G]$. To be precise, let $(\xi_i)_{i<\omega(\gamma+1)}$ be the sequence of responses of player II according to their winning strategy for the game $\Game P^{\alpha}_\kappa(j(\dB_{\kappa})/G, 1_{j(\dB_{\kappa})/G})$ in $M[G]$ and define
		$$\zeta_{\gamma}:=\sup_{n\in\omega}\xi_{\omega\gamma+n}$$
		Let II play $\zeta_{\gamma}$ in response to $(f_{\beta})_{\beta\leq\gamma}$ in $\Game^{\alpha}(I)$.
		
		We show that this constitutes a winning strategy. To this end, let $(f_{\beta})_{\beta<\alpha}$ be a run of the game where II follows the strategy described above. It follows that II wins the corresponding run $(\dot{\xi}_i,\xi_i)_{i<\omega\alpha}$ of $\Game P^{\alpha}_{\kappa}(j(\dB_{\kappa})/G,1_{j(\dB_{\kappa})/G})$ in $M[G]$. Therefore, there is a condition $q\in j(\dB_{\kappa})/G$ such that for every limit $\gamma<\alpha$,
		$$q\Vdash\{\dot{\xi}_{\omega\gamma+n}\;|\;n\in\omega\}\subseteq\{\xi_{\omega\gamma+n}\;|\;n\in\omega\}$$
		Define
		$$B:=\{\eta\in\kappa\;|\;\forall\beta<\alpha\;f_{\beta}(\eta)<\zeta_{\beta}\}$$
		Let $\tau$ be a $\dB_{\kappa}$-name such that $\tau^G=B$. By the construction, it follows that for every $\beta<\alpha$, $q$ forces that $j(\tau_{f_{\beta}})(\check{\kappa})/G$ is equal to some $\xi_{\omega\beta+n}$ and thus below $\zeta_{\beta}$. Hence, $q\Vdash_{j(\dB_{\kappa})/G}\check{\kappa}\in j(\tau)/G$ and thus $B\in I^+$, finishing the proof.
	\end{proof}
	
	The iterability of $(\alpha,\delta)$-semiproperness was shown by Shelah (see \cite[Chapter XII, Theorem 1.9]{ShelahProperImproper}). We will not give the definition of $\RCS$-iterations here and refer to the following two blackbox-theorems.
	
	The original definition of $\RCS$-iterations appears in \cite[Chapter X]{ShelahProperImproper}. Alternative constructions using boolean algebras can be found in unpublished notes by Donder-Fuchs \cite{FuchsDonderRCS} and Viale-Audrito-Steila \cite{VialeBooleanAlgebraicApproach}. A detailed proof of the iterability of $(\alpha,\omega_1)$-semiproperness using boolean algebras can be found in unpublished notes of the first author\footnote{https://hannesjakob.github.io/U01RCSIter.pdf}.
	
	\begin{mysen}\label{Theorem: Copy of Defining RCS Iteration}
		Let $\gamma$ be an ordinal and let $F$ be a function on $\gamma$. Then there is an $\RCS$-iteration $(\dB_i)_{i<\gamma+1}$ such that, for each $i<\gamma$, $\dB_{i+1}=\dB_{i}*F(i)$, provided $F(i)$ is a $\dB_{i}$-name for a boolean algebra.
	\end{mysen}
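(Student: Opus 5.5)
The statement is a black box about the existence of $\RCS$-iterations, so my plan is to build the iteration by transfinite recursion on $i\le\gamma$, following Shelah \cite[Chapter X]{ShelahProperImproper}. I would carry this out in the Boolean-algebraic framework of Donder--Fuchs \cite{FuchsDonderRCS} or Viale--Audrito--Steila \cite{VialeBooleanAlgebraicApproach}.

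Throughout the recursion I maintain complete Boolean algebras $\dB_i$ with $\dB_i$ a complete subalgebra of $\dB_j$ for $i<j$. Each $\dB_i$ will be realized as the completion of a poset of threads, and there will be retraction maps $\pi_{j,i}\colon\dB_j\to\dB_i$ that commute.

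The successor step is the easy case. Given $\dB_i$ and $F(i)$, I set $\dB_{i+1}:=\dB_i*F(i)$ when $F(i)$ is a $\dB_i$-name for a complete Boolean algebra. Concretely, $\dB_{i+1}$ is the completion of the poset of pairs $(b,\dot c)$ with $b\in\dB_i^+$ and $b\Vdash\dot c\in F(i)^+$. If $F(i)$ is not such a name, I use the trivial algebra instead, $\dB_{i+1}:=\dB_i$. The standard two-step iteration facts then show that $\dB_i$ embeds completely into $\dB_{i+1}$, and they also give the retraction $\pi_{i+1,i}$.

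The limit stage is the substantive one. At a limit $\lambda$, I define the revised-countable-support limit as follows. A condition is a thread $\langle b_i : i<\lambda\rangle$ that is coherent under the retractions, subject to a support requirement. In Shelah's formulation, each such thread is a countable-support condition taken in some generic extension by an earlier stage in which $\cf(\lambda)$ has become $\omega$. Equivalently, I take the direct limit on those threads where it suffices, and otherwise the full inverse limit restricted to threads that are decided by countable information in some intermediate extension. The limit $\dB_\lambda$ is then the completion of this thread poset.

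The hardest part is proving that each $\dB_i$ with $i<\lambda$ is a complete subalgebra of $\dB_\lambda$. This needs a maximal antichain of $\dB_i$ to remain maximal in $\dB_\lambda$, which I would get by showing that $\pi_{\lambda,i}$ is a well-defined projection: every thread projects to its $i$-th coordinate, and any $c\le\pi_{\lambda,i}(b)$ in $\dB_i$ can be amalgamated with $b$. I expect this amalgamation, which has to go through the $\cf(\lambda)$-changes in intermediate extensions, to be the main obstacle. I would handle it by the usual argument that the restriction of a thread can be refined coordinate by coordinate.
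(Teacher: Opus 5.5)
Your outline takes the same route as the paper. The paper gives no proof of this statement and simply cites Shelah's Chapter X and the Boolean-algebraic constructions of Donder--Fuchs and Viale--Audrito--Steila, where $\dB_\lambda$ is the completion of the threads that either lie in $\bigcup_{i<\lambda}\dB_i$ or have some coordinate $b_i$ forcing $\cf(\check\lambda)=\omega$; this is what your limit description amounts to.

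The one remark is that the step you flag as the main obstacle is routine. For $0\neq c\le b_i$ in $\dB_i$, the thread with coordinates $b_j\wedge c$ for $i\le j<\lambda$ is coherent, because $\pi_{j,k}(x\wedge c)=\pi_{j,k}(x)\wedge c$ whenever $c\in\dB_k$. It also inherits the support condition from $b$, so no cofinality analysis is needed at that step. The real work of $\RCS$ iterations lies in preservation results such as Theorem~\ref{Theorem: Copy of RCS Iteration}.
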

	
	\begin{mysen}\label{Theorem: Copy of RCS Iteration}
		Let $\alpha$ be a countable ordinal and $\overline{\dB}=(\dB_i)_{i<\gamma}$ an $\RCS$-iteration. Suppose the following:
		\begin{enumerate}
			\item For every $i<\gamma$,
			$$\Vdash_{\dB_i}\dB_{i+1}/\dB_i\text{ is $(\alpha,\omega_1)$-semiproper}$$
			\item For every even ordinal $i<\gamma$, $\Vdash_{\dB_{i+1}}|2^{\dB_i}|\leq\omega_1$.
		\end{enumerate}
		Then for every $i<j<\gamma$,
		$$\Vdash_{\dB_i}\dB_j/\dB_i\text{ is $(\alpha,\omega_1)$-semiproper}$$
	\end{mysen}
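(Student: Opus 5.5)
This is Shelah's iteration theorem for $(\alpha,\omega_1)$-semiproperness along $\RCS$-iterations. I would prove it by induction on $j$, for all $i<j$ at once, working with the game characterization from Proposition \ref{proposition: equivalence} (with $\delta=\omega_1$) rather than directly with towers. For fixed $i$, the strengthened claim I would carry through the induction is the following. Given an $\alpha$-tower $(M_\beta)_{\beta<\alpha}$ containing the iteration, and a $\dB_i$-name $\dot p$ for a condition of $\dB_j/\dB_i$ lying in $M_0$, any $q\in\dB_i$ that is $((M_\beta)_{\beta<\alpha},\dB_i)$-semigeneric extends to $q'\in\dB_j$ with $q'\restriction i=q$ (the Boolean projection) and $q'$ semigeneric for the whole tower over $\dB_j$. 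This is the standard ``semiproper extension lemma'' form of the iteration theorem.

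\textbf{Successor step} $j=k+1$. Apply the induction hypothesis from $i$ to $k$, then use hypothesis (1) in $V^{\dB_k}$. There the tower $(M_\beta[\dot G_k])_{\beta<\alpha}$ is still an $\alpha$-tower, and it still has the same $\omega_1$-intersections, because the condition is semigeneric. So $(\alpha,\omega_1)$-semiproperness of $\dB_{k+1}/\dB_k$ gives a name for a semigeneric condition. One small point to check is that continuity at limits and membership of initial segments transfer to $M_\beta[G]$.

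\textbf{Limit step, $\cf(j)=\omega$.} Fix a cofinal sequence $j_n\in M_0$. I would interleave a fusion over $n$ with the tower: build $q_n\in\dB_{j_n}$ with $q_{n+1}\restriction j_n=q_n$, each semigeneric for every $M_\beta$, and meanwhile handle the countably many names from $\bigcup_\beta M_\beta$ via bookkeeping. To make this uniform over the whole tower, the bookkeeping should run along the $\omega\cdot\alpha$ rounds of $\Game P^\alpha_{\omega_1}$, so that player II's strategy absorbs all names. The $\RCS$ limit is what guarantees that the thread $(q_n)$ determines a condition in $\dB_j$.

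\textbf{Limit step, uncountable cofinality.} This is where $\RCS$ matters and where hypothesis (2) is used. Since every $M_\beta$ is countable, $\sup(M_\beta\cap j)$ has countable cofinality, and the semigeneric condition must also be semigeneric for the intermediate models. Genuine $\RCS$ limits also have to handle stages at which $\omega_1$ gets collapsed. Hypothesis (2), $|2^{\dB_i}|\le\omega_1$ after even stages, is what ensures that below any stage of uncountable cofinality in $V^{\dB_i}$, either the cofinality becomes $\omega$ (so we are back in the $\omega$-cofinality case, via Theorem \ref{Theorem: Copy of Defining RCS Iteration}'s revised countable support) or the iteration is essentially a direct limit, where any name for a countable ordinal is decided by an initial segment.

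The main obstacle I expect is the countable-cofinality limit when $\omega_1$ may be collapsed at intermediate stages. There the ``countable sets of ordinals'' in the fusion are only countable in intermediate extensions, so one has to work with names for countable sequences $(j_n)$ that are chosen generically. This is exactly Shelah's treatment via $\RCS$ supports, and I would import his Chapter XII argument, or the Boolean algebraic version in the first author's notes, rather than redo it.
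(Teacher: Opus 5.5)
Your proposal agrees with the paper's treatment. The paper gives no proof of this theorem; it imports it as a black box from Shelah (\emph{Proper and Improper Forcing}, Ch.~XII, Thm.~1.9) and points to the first author's Boolean-algebraic notes for details, which is exactly where your outline defers the limit stages.

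One correction to your account of the obstacle. By your induction every initial segment $\dB_k$ is $(\alpha,\omega_1)$-semiproper, hence semiproper, so $\omega_1$ is never collapsed in this iteration. What the $\RCS$ limits must actually handle is that stages of $V$-cofinality $\geq\omega_2$ may acquire countable cofinality at an intermediate stage, which is why the cofinal sequence $(j_n)$ exists only as a name.
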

	
	To finish this section, we show that, by adding a proper collapse, it is possible increase the ``degree" of semiproperness:
	
	\begin{mylem}\label{Lemma: Two-Step Iteration}
		Let $\delta$ be a cardinal and $\alpha$ be a countable ordinal. Suppose that $\dP$ is $(\alpha,\delta)$-semiproper, forces $|\delta|=\omega_1$ and $\dot{\dQ}$ is a $\dP$-name for an $(\alpha,\omega_1)$-semiproper forcing. Then $\dP*\dot{\dQ}$ is $(\alpha,\delta)$-semiproper.
	\end{mylem}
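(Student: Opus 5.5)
We will prove that $\dP*\dot{\dQ}$ is $(\alpha,\delta)$-semiproper by working directly with towers, following the usual proof that a two-step iteration of semiproper posets is semiproper. Fix $\Theta$ large and let $x$ code $\dP,\dot{\dQ}$ together with the witnesses $x_{\dP}$, $\dot{x}_{\dQ}$ supplied by Definition \ref{definition: (alpha,delta)-semiproper} for $\dP$ (in $V$) and for $\dot{\dQ}$ (in $V^{\dP}$). Also fix a $\dP$-name $\dot{e}$ for a surjection $\omega_1\to\delta$; this exists because $\dP$ forces $|\delta|=\omega_1$. Let $(M_i)_{i<\alpha}$ be an $\alpha$-tower with $x\in M_0$, and let $(p,\dot q)\in M_0\cap(\dP*\dot{\dQ})$.

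\textbf{Step 1.} By $(\alpha,\delta)$-semiproperness of $\dP$, we pick $p'\leq p$ that is $((M_i)_{i<\alpha},\dP)$-$\delta$-semigeneric. Let $G$ be $\dP$-generic with $p'\in G$. In $V[G]$ we check that $(M_i[G])_{i<\alpha}$ is an $\alpha$-tower of countable elementary substructures of $H(\Theta)^{V[G]}$. For continuity, note that $\bigcup_{i<\delta'}M_i[G]=M_{\delta'}[G]$. For successors, $(M_i)_{i<j}\in M_j$ implies $(M_i[G])_{i<j}\in M_j[G]$, since $G$ is available as a name in $M_j[G]$. Finally $\dot{x}_{\dQ}^G$, $\dot q^G\in M_0[G]$.

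\textbf{Step 2.} In $V[G]$ we apply the $(\alpha,\omega_1)$-semiproperness of $\dQ$ to obtain $r\leq\dot q^G$ that is $((M_i[G])_{i<\alpha},\dQ)$-$\omega_1$-semigeneric. We let $\dot r$ be a $\dP$-name for such an $r$ (chosen below $p'$, and $\dot q$ elsewhere). Our candidate condition is $(p',\dot r)$.

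\textbf{Step 3.} We verify $\delta$-semigenericity. Let $H$ be $\dQ$-generic over $V[G]$ with $r\in H$, and let $\dot\xi\in M_i$ be a $\dP*\dot{\dQ}$-name for an ordinal below $\delta$. Then $\xi=\dot\xi^{G*H}\in M_i[G][H]$. Since $\dot e\in M_0\subseteq M_i$, there is $\nu\in M_i[G][H]\cap\omega_1$ with $\xi=\dot e^G(\nu)$. By the choice of $r$, $M_i[G][H]\cap\omega_1=M_i[G]\cap\omega_1$, so $\nu\in M_i[G]$. Hence $\xi=\dot e^G(\nu)\in M_i[G]\cap\delta$, and this equals $M_i\cap\delta$ by Step 1. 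Consequently $M_i[G*H]\cap\delta=M_i\cap\delta$ for every $i<\alpha$.

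\textbf{Main obstacle.} The delicate point is Step 1, specifically confirming that $(M_i[G])$ is a genuine tower in the sense of Definition \ref{definition: (alpha,delta)-semiproper}. Elementarity of $M_i[G]$ in $H(\Theta)[G]=H(\Theta)^{V[G]}$ holds for every countable $M_i\prec H(\Theta)$ containing $\dP$. There is a subtlety, though: the witness $\dot{x}_{\dQ}$ and the order of quantifiers ("for every sufficiently large $\Theta$") have to be interpreted in $V[G]$ at the same $\Theta$. We handle this by taking $\Theta$ large enough that $(\alpha,\omega_1)$-semiproperness of $\dQ$ holds at $\Theta$ in $V[G]$, and by putting the chosen name $\dot{x}_{\dQ}$ into $x$. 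A second point is that Step 2 requires the models $M_i[G]$ to be countable in $V[G]$, which holds because each $M_i[G]$ is a countable union of countable sets. If necessary, we can instead route the whole argument through the game characterization of Proposition \ref{proposition: equivalence}, composing the two strategies and pulling each name for an ordinal in $\delta$ back to an $\omega_1$-index via $\dot e$.
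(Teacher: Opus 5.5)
Your proposal is correct and follows the paper's proof essentially step by step: pick a $((M_i)_{i<\alpha},\dP)$-$\delta$-semigeneric $p_0\leq p$, pass to the tower $(M_i[G])_{i<\alpha}$ in $V[G]$, and take a name for a condition below $\dot q$ that is $\omega_1$-semigeneric for every $M_i[G]$. Then use a map from $\omega_1$ onto $\delta$ lying in $M_i[G]$ to transfer $M_i[G][H]\cap\omega_1=M_i[G]\cap\omega_1$ to $M_i[G][H]\cap\delta=M_i\cap\delta$. Your treatment of the witnesses $x_{\dP}$ and $\dot x_{\dQ}$ is more careful than the paper's; just make sure $\delta$ (or $\dot e$ itself) is also put into $x$, so that $\dot e$ can be taken in $M_0$ by elementarity.
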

	
	\begin{proof}
		Let $(M_i)_{i<\alpha}$ be an $\alpha$-tower of countable elementary submodels of $H(\Theta)$ where $\Theta$ is a sufficiently large regular cardinal such that $\dP*\dot{\dQ}, \delta, \alpha\in M_0$ and let $(p,\dot{q})\in M_0\cap\dP*\dot{\dQ}$. Let $p_0\leq p$ be $(M_i,\dP)$-$\delta$-semigeneric for every $i<\alpha$. Let $G$ be $\dP$-generic containing $p_0$. Then $(M_i[G])_{i<\alpha}$ is an $\alpha$-tower of countable elementary submodel of $H(\Theta)[G] = (H(\Theta))^{V[G]}$ and thus there is $\dot{q}_0^G\leq\dot{q}^G$ which is $M_i[G]$-semigeneric for every $i<\alpha$. Back in $V$, let $\dot{q}_0$ be forced to be an extension of $\dot{q}$ which is $M_i[\dot{G}_{\dP}]$-$\omega_1$-semigeneric for every $i<\alpha$. We claim that $(p_0,\dot{q}_0)$ is $M_i$-$\delta$-semigeneric for every $i<\alpha$. Let $G*H$ be $\dP*\dot{\dQ}$-generic containing $(p_0,\dot{q}_0)$. Then for every $i<\alpha$, $M_i[G*H]=M_i[G][H]$. Furthermore, $M_i[G]\cap\delta=M_i\cap\delta$. By our assumption on the forcing $\dP$, $M_i[G]$ contains a bijection $F$ between $\omega_1$ and $\delta$ and $\dot{q}_0$ is forced to be $M_i[G]$-$\omega_1$-semigeneric. Thus, $M_i[G][H]\cap\omega_1=M_i[G]\cap\omega_1$. However, since $F\in M_i[G]$ is a bijection between $\omega_1$ and $\delta$, this implies
		$$M_i[G][H]\cap\delta=M_i[G]\cap\delta=M_i\cap\delta$$ for any $i<\alpha$, as desired.
	\end{proof}
	
\section{The $\alpha$-$(\dagger)$ principle}\label{section: dagger}

In this section, we generalize the classical $(\dagger)$ principle (\cite{ForemanMagidorShelahMM}) which asserts that a forcing is stationary set preserving iff it is semiproper to the context where the relevant concepts are relativised to countable towers. These results will be applied in the proof of our main theorem to show the forcing iterands are $(\alpha, \omega_1)$-semiproper for all $\alpha<\omega_1$.

	\begin{mydef}[Definition 5.2, \cite{AbrahamProperChapter}]
		Let $A$ be an uncountable set and $\alpha$ a countable ordinal.
		\begin{enumerate}
			\item $P_{\omega_1}^{\alpha}(A)$ is the set of all $\subseteq$-increasing and continuous sequences $(a_i)_{i<\alpha}$ of countable subsets of $A$;
			\item An $\alpha$-function is a function $F\colon(\bigcup_{\beta<\alpha}P_{\omega_1}^{\beta}(A))\times [A]^{<\omega}\to P_{\omega_1}(A)$. A sequence $(a_i)_{i<\alpha}\in P_{\omega_1}^{\alpha}(A)$ is \emph{closed under $F$} if for every $\beta<\alpha$ which is a successor ordinal or $0$, for every $x\in[a_{\beta}]^{<\omega}$, $F((a_i)_{i<\beta},x)\subseteq a_{\beta}$.
			\item For an $\alpha$-function $F$, let $G(F)$ be the collection of all $\alpha$-sequences which are closed under $F$. Then $\{G(F)\;|\;F\text{ is an $\alpha$-function}\}$ generates a countably closed filter on $P_{\omega_1}^{\alpha}(A)$, which we denote by $\mathcal{D}_{\omega_1}^{\alpha}(A)$.
			\item We say that $S\subseteq P_{\omega_1}^{\alpha}(A)$ is \emph{stationary} if its complement is not in $\mathcal{D}_{\omega_1}^{\alpha}(A)$.
		\end{enumerate}
	\end{mydef}
	
	So, $S\subseteq P_{\omega_1}^{\alpha}(A)$ is stationary if and only if for every $\alpha$-function $F$, there is some $(a_i)_{i\in\alpha}\in S$ which is closed under $F$. To simplify the notion of $\alpha$-stationarity, we have the following proposition, which is an analogue of the folklore result that $S\subseteq[A]^{<\kappa}$ is stationary if and only if for every large enough $\Theta$ and every $x\in H(\Theta)$, there is $M\prec H(\Theta)$ of size ${<}\,\kappa$ such that $x\in M$ and $M\cap A\in S$.
	
	\begin{mypro}\label{Proposition: Equivalence of Stationarity}
		Let $A$ be an uncountable set and $\alpha$ an ordinal. Let $S\subseteq P_{\omega_1}^{\alpha}(A)$. The following are equivalent:
		\begin{enumerate}
			\item $S$ is stationary in $P_{\omega_1}^{\alpha}(A)$;
			\item For every sufficiently large $\Theta$ and every $x\in H(\Theta)$, there is an $\alpha$-tower $(M_i)_{i<\alpha}$ of countable elementary submodels of $H(\Theta)$ such that $x,A\in M_0$ and $(M_i\cap A)_{i<\alpha}\in S$.
		\end{enumerate}
	\end{mypro}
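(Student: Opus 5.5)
We prove the two implications separately, following the folklore argument for $[A]^{\aleph_0}$ and checking that towers behave correctly.

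\textbf{(2) $\Rightarrow$ (1).} Let $F$ be an $\alpha$-function. Choose $\Theta$ large enough that $F\in H(\Theta)$ and (2) applies, and put $x:=F$. Condition (2) gives an $\alpha$-tower $(M_i)_{i<\alpha}$ with $F,A\in M_0$ and $(M_i\cap A)_{i<\alpha}\in S$. Write $a_i:=M_i\cap A$.

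The sequence $(a_i)_{i<\alpha}$ lies in $P^\alpha_{\omega_1}(A)$: it is $\subseteq$-increasing because $M_i\in M_{i+1}$ and $M_i$ is countable, so $M_i\subseteq M_{i+1}$; it is continuous at limits by definition of a tower.

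It is closed under $F$. Take $\beta=0$ or $\beta$ a successor, and $x\in[a_\beta]^{<\omega}$. The sequence $(M_i)_{i<\beta}$ belongs to $M_\beta$: for $\beta$ a successor this is the tower clause, and for $\beta=0$ it is the empty sequence. Since $A\in M_\beta$, also $(a_i)_{i<\beta}\in M_\beta$. Hence $F((a_i)_{i<\beta},x)\in M_\beta$, and this value is a countable set, so it is contained in $M_\beta\cap A=a_\beta$. Thus $S$ meets $G(F)$. Every set in $\mathcal D^\alpha_{\omega_1}(A)$ contains some $G(F)$: a countable intersection of sets $G(F_n)$ contains $G(F')$ where $F'$ is the pointwise union of the $F_n$. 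So the complement of $S$ is not in the filter, and $S$ is stationary.

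\textbf{(1) $\Rightarrow$ (2).} Fix $\Theta$ large and $x\in H(\Theta)$. Fix a well-order $\lhd$ of $H(\Theta)$ and countable Skolem functions for $(H(\Theta),\in,\lhd,x,A)$. We build an $\alpha$-tower whose traces on $A$ are closed under a suitable $\alpha$-function, and then use stationarity of $S$.

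\textbf{Step 1: the successor operation on towers.} Define an operation $\Phi$ on sequences: given a tower-like sequence $(M_i)_{i<\beta}$, let $\Phi((M_i)_{i<\beta})$ be the Skolem hull of $\{x,A,(M_i)_{i<\beta}\}\cup\bigcup_{i<\beta}M_i$.

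\textbf{Step 2: recovering the tower from its traces.} This is the main obstacle. The $\alpha$-function only sees the traces $(a_i)_{i<\beta}$, not the models $M_i$. We get around this by building the models canonically from their traces. Define by recursion a map $\mathcal N$ on sequences in $\bigcup_{\beta\le\alpha}P^\beta_{\omega_1}(A)$:
\begin{itemize}
\item $\mathcal N(\vec a\restriction\beta)_\gamma:=\bigcup_{i<\gamma}\mathcal N(\vec a\restriction\beta)_i$ for limit $\gamma$;
\item $\mathcal N(\vec a\restriction\beta)_\gamma:=\Hull\big(\{x,A,\mathcal N(\vec a\restriction\gamma)\}\cup a_\gamma\big)$ for $\gamma=0$ or $\gamma$ a successor.
\end{itemize}
The hulls are taken in the fixed Skolem structure, so this is a definable recursion. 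Now define the $\alpha$-function by
\[F((a_i)_{i<\beta},y):=\Hull\big(\{x,A,\mathcal N((a_i)_{i<\beta})\}\cup y\big)\cap A .\]

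\textbf{Step 3: closure gives a tower.} Let $\vec a\in G(F)\cap S$, and set $M_\gamma:=\mathcal N(\vec a)_\gamma$. We must check two things.

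First, $M_\gamma\cap A=a_\gamma$. For $\gamma=0$ or a successor, closure under $F$ shows that every finite piece of the hull of $a_\gamma$ together with the parameters meets $A$ inside $a_\gamma$. Taking the directed union over finite $y\subseteq a_\gamma$ gives $M_\gamma\cap A\subseteq a_\gamma$, and the reverse inclusion is immediate. At limit $\gamma$ the equality follows from continuity of both sequences.

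Second, $(M_\gamma)$ is an $\alpha$-tower. It is continuous at limits by construction. At a successor $\gamma$, the parameter $\mathcal N(\vec a\restriction\gamma)=(M_i)_{i<\gamma}$ lies in $M_\gamma$.

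Since $x,A\in M_0$ and $(M_i\cap A)_{i<\alpha}=\vec a\in S$, this gives (2).

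Two routine points remain. One is that the recursion for $\mathcal N$ is well defined: $\mathcal N(\vec a\restriction\gamma)$ is determined before stage $\gamma$, and it is coherent, meaning $\mathcal N(\vec a\restriction\beta)$ extends $\mathcal N(\vec a\restriction\gamma)$ for $\gamma<\beta$. The other is that each $M_i$ is countable, which holds because each hull is generated by countably many parameters.
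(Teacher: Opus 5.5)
Your proof is correct. The direction (2)$\Rightarrow$(1) is the same as the paper's. For (1)$\Rightarrow$(2) you use the same overall strategy as the paper: an $\alpha$-function given by traces on $A$ of Skolem hulls, followed by reconstruction of the tower from an element of $S$ closed under it. What differs is how the earlier models get into the later ones.

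\textbf{The paper's mechanism.} The paper works with an increasing ladder of structures $(H(\Theta_i),\in,G_i)$. Into the hull $N_i$ it feeds only the traces $(a_j)_{j<i}$ together with $(\Theta_j)_{j<i}$ and $(G_j)_{j<i}$. Each earlier $N_j$ is then definable inside $H(\Theta_i)$, and hence is an element of $N_i$. Afterwards the paper must cut down to $M_i=N_i\cap H(\Theta)$ and verify elementarity there.

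\textbf{Your mechanism.} You define the models by a recursion $\mathcal N$ on trace sequences and insert the already-built sequence $(M_j)_{j<\gamma}$ directly as a generator of the hull. This is legitimate for two reasons: an $\alpha$-function may be an arbitrary external function, and a countable sequence of countable subsets of $H(\Theta)$ is an element of $H(\Theta)$. Everything therefore stays inside one Skolemized $H(\Theta)$. This removes the need for the $\Theta_i$'s, the $G_i$'s and the intersection step. The cost is only the routine coherence check for $\mathcal N$.

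\textbf{Points left implicit.} First, $\Theta$ should be taken regular, so that the parameter $\mathcal N(\vec a\restriction\gamma)$ really lies in $H(\Theta)$. Second, the limit stages are elementary submodels because the models form an increasing chain. This chain property follows from $M_{\gamma'}\in M_{\gamma'+1}$, the countability of $M_{\gamma'}$, and induction through limits.
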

	
	\begin{proof}
		Suppose first that $S$ is stationary in $P_{\omega_1}^{\alpha}(A)$. Let $\Theta > 2^{|A|}$ be a regular cardinal. Let $x\in H(\Theta)$. Let $(G_i)_{i<\alpha}$ and $(\Theta_i)_{i<\alpha}$ be defined as follows: $\Theta_0=\Theta$ and $G_0$ is a well-ordering for $H(\Theta)$. For every successor $i<\alpha$, $\Theta_i$ is large enough to contain $(G_j)_{j<i}$ and $(\Theta_j)_{j<i}$ and $G_i$ is a well-ordering for $H(\Theta_i)$.
		
		Now let $F$ be defined as follows: For every successor $i<\alpha$, $(a_j)_{j<i}\in P_{\omega_1}^i(A)$ and $y\in[A]^{<\omega}$, we let
		$$F((a_j)_{j<i},y):=\Hull^{(H(\Theta_i),\in,G_i)}(\{(\Theta_j)_{j<i},(G_j)_{j < i},(a_i)_{j<i}, x, A\}\cup y)\cap A$$
		Since $S$ is stationary, we can fix $(a_i)_{i<\alpha}\in S$ which is closed under $F$. For every successor $i<\alpha$, let $N_i=\Hull^{(H(\Theta_i),\in,G_i)}(\{(\Theta_j)_{j<i},(G_j)_{j < i},(a_i)_{j<i}, x, A\}\cup a_i)$ and $M_i= N_i \cap H(\Theta)$. For limit $i<\alpha$, we take the union. We verify that $(M_i)_{i<\alpha}$ is as required.
		
		First note that, for every successor $i<\alpha$, $M_i\cap A=a_i$ by the definition of $F$. Since $(M_i)_{i<\alpha}$ and $(a_i)_{i<\alpha}$ are continuous, this persists in limits. Also, clearly $x,A\in M_0$, so we only have to verify that $(M_i)_{i<\alpha}$ is an $\alpha$-tower. 
		
	\begin{enumerate}
	\item Clearly $M_0\prec H(\Theta)$ by definition and when $i>0$, $M_i\prec H(\Theta)$ since $M_i\subseteq N_i\prec H(\Theta_i)$ and $(H(\Theta), G_0)\in H(\Theta_i)$. 
	\item For every successor $i<\alpha$, $(N_j)_{j<i}\in N_i$ by the definition of $N_i$. Since $(M_j)_{j<i}= (N_j \cap H(\Theta))_{j<i} \in N_i\cap H(\Theta)=M_i$. In particular, for any $j_0<j_1$, $M_{j_0}\subseteq M_{j_1} \prec H(\Theta)$.
	\end{enumerate}

		Now assume that (2) holds. Let $F$ be an $\alpha$-function and let $\Theta$ be sufficiently large. We may thus find an $\alpha$-tower $(M_i)_{i<\alpha}$ such that $F,A\in M_0$ and $(M_i\cap A)_{i<\alpha}\in S$. We verify that $(M_i\cap A)_{i<\alpha}$ is closed under $F$: For every successor ordinal $i<\alpha$, $(M_j\cap A)_{j<i}\in M_i$ by the definition of an $\alpha$-tower. Since $M_i\prec (H(\Theta),\in)$ and contains $F$, for every $x\in[M_i\cap A]^{<\omega}$, $F((a_j)_{j<i},x)\subseteq M_i\cap A$. So $(M_i\cap A)_{i<\alpha}$ is closed under $F$.
	\end{proof}
We need the following fact about generalized normality. 
\begin{mylem}\label{lemma: normality}
Let $A$ be an uncountable set and $\alpha$ an ordinal. Let $S\subseteq P_{\omega_1}^{\alpha}(A)$ be stationary and let $g: S\to A$ be a choice function in the sense that $g((M_i)_{i<\alpha})\in M_0$. Then there exists a stationary $S'\subseteq S$ and $p\in A$ such that $g$ is constant on $S'$ with value $p$.
\end{mylem}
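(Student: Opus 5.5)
We argue by contradiction, mirroring the classical proof of Fodor's lemma for $P_{\omega_1}(A)$. Note that the statement writes $g((M_i)_{i<\alpha})\in M_0$, meaning $g((a_i)_{i<\alpha})\in a_0$ for $(a_i)_{i<\alpha}\in S$. Suppose that for every $p\in A$ the set $S_p:=\{\vec a\in S\;|\;g(\vec a)=p\}$ is nonstationary. For each $p$ we then fix, using the definition of $\mathcal{D}^{\alpha}_{\omega_1}(A)$ and the fact that it is generated by the sets $G(F)$, an $\alpha$-function $F_p$ such that no element of $S_p$ is closed under $F_p$. The goal is to diagonalize the $F_p$ into a single $\alpha$-function $F$ whose closed sequences avoid every $S_p$ whose index lies in $a_0$. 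Since every $\vec a\in S$ has $g(\vec a)\in a_0$, this would show that no element of $S$ is closed under $F$, contradicting stationarity.

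The diagonal function is defined as follows. For $\beta<\alpha$, $\vec b\in P^{\beta}_{\omega_1}(A)$ and $x\in[A]^{<\omega}$, set
$$F(\vec b,x):=\bigcup_{p\in x}F_p(\vec b,x)\cup\bigcup_{p\in x}F_p(\vec b,x\smallsetminus\{p\}).$$
More simply, we can take the union of $F_p(\vec b,y)$ over all $p\in x$ and all $y\subseteq x$. This is a finite union of countable sets, so it is countable and $F$ is again an $\alpha$-function.

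The key step is to check that if $\vec a$ is closed under $F$ and $p\in a_0$, then $\vec a$ is closed under $F_p$. Fix $\beta$ equal to $0$ or a successor, and $y\in[a_\beta]^{<\omega}$. Since the sequence is increasing, $p\in a_0\subseteq a_\beta$, so $x:=y\cup\{p\}\in[a_\beta]^{<\omega}$. Closure under $F$ then gives $F_p((a_i)_{i<\beta},y)\subseteq F((a_i)_{i<\beta},x)\subseteq a_\beta$. Hence $\vec a\in G(F_p)$.

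Now take $\vec a\in S\cap G(F)$, which exists because $S$ is stationary, and put $p=g(\vec a)\in a_0$. Then $\vec a\in S_p\cap G(F_p)$, contradicting the choice of $F_p$. So some $S_p$ is stationary, and we take $S'=S_p$.

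The only delicate point is making sure the index $p$ is available at every closure stage, including $\beta=0$. This is exactly why the hypothesis requires $g(\vec a)\in a_0$ rather than merely $g(\vec a)\in\bigcup_i a_i$. With membership in $a_0$, monotonicity of the sequence handles every stage uniformly, and no use of Proposition \ref{Proposition: Equivalence of Stationarity} is needed. Alternatively, one can run the same argument through towers of models: choose $F_p$ by a well-ordering so that $p\mapsto F_p$ lies in $M_0$, and use elementarity to get $F_p\in M_0$ for $p\in M_0$.
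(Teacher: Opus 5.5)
Your proof is correct, but you establish the key step by a different route from the paper. Both arguments run the same skeleton. Assume each $S_p$ is nonstationary, fix $\alpha$-functions $F_p$ with $G(F_p)\cap S_p=\emptyset$, and show that the diagonal set $E=\{\vec a:\forall p\in a_0\,(\vec a\in G(F_p))\}$ lies in $\mathcal{D}^{\alpha}_{\omega_1}(A)$.

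The paper proves this last step model-theoretically. It takes the set $B$ of traces $(N_i\cap A)_{i<\alpha}$ of $\alpha$-towers of elementary submodels of $H(\Theta)$ with $(F_p)_{p\in A}\in N_0$. It asserts $B\in\mathcal{D}^{\alpha}_{\omega_1}(A)$ and then uses elementarity to get $B\subseteq E$: for $p\in N_0$ we have $F_p\in N_j$ and $(N_i\cap A)_{i<j}\in N_j$.

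You instead write down the explicit $\alpha$-function $F(\vec b,x)=\bigcup_{p\in x,\,y\subseteq x}F_p(\vec b,y)$. You then show $G(F)\subseteq E$ by feeding the index $p$ into the finite argument, as in the classical proof that the club filter on $P_{\omega_1}(A)$ is normal.

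Your route is more elementary and self-contained. It needs no Skolem hulls or large $\Theta$. It also bypasses the paper's step ``$B\in\mathcal{D}^{\alpha}_{\omega_1}(A)$'', which the paper calls clear but which rests on the hull construction in the first half of Proposition \ref{Proposition: Equivalence of Stationarity}. It also shows plainly why the hypothesis is $g(\vec a)\in a_0$: monotonicity then puts $p$ into every $a_\beta$, including at the stage $\beta=0$.

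The paper's version has the advantage of staying in the tower language used throughout that section. The lemma is applied in exactly that form in Lemma \ref{Lemma: Dagger for alpha-stationarity}, where $S$ consists of traces of towers and the chosen condition lies in $M_0$.

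Incidentally, your first displayed formula for $F$ already suffices. If $p\in y$, take $x=y$. If $p\notin y$, then $x\smallsetminus\{p\}=y$.
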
	

\begin{proof}
Suppose for the sake of contradiction that for any $p\in A$, $C_p=\{\bar{M}\in S: g(\bar{M})\neq p\}\in \mathcal{D}^\alpha_{\omega_1}(A)$. In other words, there is some $\alpha$-function $F_p$ such that $C_p \supseteq G(F_p)$. We claim that $E=\{\bar{M}: \forall p\in M_0, \bar{M}\in G(F_p)\}\in \mathcal{D}^\alpha_{\omega_1}(A)$. Let $\Theta$ be a sufficiently large regular cardinal and consider the following collection $$B=\{(N_i\cap A)_{i<\alpha}: (N_i)_{i<\alpha} \text{ is an $\alpha$-tower of elementary submodels of }H(\Theta) $$$$\text{ with $N_0$ containing } (F_p)_{p\in A}, A, \alpha\}.$$
Clearly, $B\in \mathcal{D}^\alpha_{\omega_1}(A)$. We show that $B\subseteq E$. Given $(N_i\cap A)_{i<\alpha}\in B$ and $p\in N_0\cap A$, we need to show that $(N_i\cap A)_{i<\alpha}\in G(F_p)$. Given a successor ordinal $j<\alpha$, since $(N_i\cap A)_{i<j}, F_p, A\in N_j$, we know that for any $a\in [N_j\cap A]^{<\omega}$, $F_p((N_i\cap A)_{i<j}, a) \in N_j \cap [A]^\omega \subseteq N_j\cap A$. It follows that $(N_i\cap A)_{i<\alpha}\in G(F_p)$ and $B\subseteq E$. Since $S$ is stationary, we can find $\bar{M}\in S\cap E$. Let $g(\bar{M})=p\in M_0$. However, by the definition of $E$, $\bar{M}\in C_p$ which means $g(\bar{M})\neq p$, which is a contradiction.
\end{proof}

	We can now prove that, in many models, a forcing being $\alpha$-stationary set preserving is equivalent to it being $(\alpha,\omega_1)$-semiproper. To start, we first show that a forcing being $(\alpha,\delta)$-semiproper implies that it preserves stationary subsets of $P_{\omega_1}^{\alpha}(\delta)$.
	
	\begin{mylem}\label{Lemma: Alpha-SP is Alpha-SSP}
		Let $S\subseteq P_{\omega_1}^{\alpha}(\delta)$ be stationary and let $\dP$ be a poset which is $(\alpha,\delta)$-semiproper. Then $S$ remains stationary after forcing with $\dP$.
	\end{mylem}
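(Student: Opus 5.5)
We argue by the tower characterization of stationarity, Proposition \ref{Proposition: Equivalence of Stationarity}, applied in $V[G]$. Fix $p\in\dP$ and a $\dP$-name $\dot F$ for an $\alpha$-function on $\delta$ (in $V[G]$). It suffices to find $q\le p$ and $\bar a\in S$ such that $q\Vdash \bar a\in G(\dot F)$, since then the set of conditions forcing $S\cap G(\dot F)\neq\emptyset$ is dense. Here we use that the filter in $V[G]$ is generated by the sets $G(F)$, so $S$ stays stationary iff it meets every $G(F)$ with $F\in V[G]$.

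Let $\Theta$ be sufficiently large and let $x\in H(\Theta)$ witness the $(\alpha,\delta)$-semiproperness of $\dP$ (Definition \ref{definition: (alpha,delta)-semiproper}). Since $S$ is stationary in $V$, Proposition \ref{Proposition: Equivalence of Stationarity} gives an $\alpha$-tower $(M_i)_{i<\alpha}$ of countable elementary submodels of $H(\Theta)$ with $x,\dP,p,\dot F,\delta\in M_0$ and $(M_i\cap\delta)_{i<\alpha}\in S$. By semiproperness there is $q\le p$ that is $(M_i,\dP)$-$\delta$-semigeneric for every $i<\alpha$. Let $G\ni q$ be generic.

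We claim that $(M_i\cap\delta)_{i<\alpha}=(M_i[G]\cap\delta)_{i<\alpha}$ is closed under $F=\dot F^G$.
\begin{enumerate}
\item By semigenericity, $M_i[G]\cap\delta=M_i\cap\delta$ for each $i$.
\item $(M_i[G])_{i<\alpha}$ is an $\alpha$-tower of elementary submodels of $H(\Theta)[G]$. Continuity at limits holds because $M_\lambda[G]=\bigcup_{i<\lambda}M_i[G]$. For successor $j$ we have $(M_i)_{i<j}\in M_j$, and hence $(M_i[G])_{i<j}$ and $(M_i\cap\delta)_{i<j}$ are definable in $M_j[G]$ from $G$ and $(M_i)_{i<j}$.
\item Now let $j$ be $0$ or a successor and $y\in[M_j\cap\delta]^{<\omega}$. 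Since $F$ and $(M_i\cap\delta)_{i<j}$ lie in $M_j[G]\prec H(\Theta)[G]$, the countable set $F((M_i\cap\delta)_{i<j},y)$ belongs to $M_j[G]$. Hence it is a subset of $M_j[G]$, so it lies inside $M_j[G]\cap\delta=M_j\cap\delta$.
\end{enumerate}
This proves the claim, and hence the lemma.

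The main point to check carefully is step (2). We need that $M_j[G]$ contains $(M_i\cap\delta)_{i<j}$ as the same object that appears in $V$, that $F(\bar a,y)$ being countable and in $M_j[G]$ yields $F(\bar a,y)\subseteq M_j[G]$ (true since $\omega\subseteq M_j[G]$), and that the case $j=0$ works because $\dot F\in M_0$. We should also handle the point that the stationarity equivalence in $V[G]$ uses $H(\Theta)^{V[G]}=H(\Theta)[G]$. We only needed the easy direction (2)$\Rightarrow$(1) of that equivalence, which we have in effect reproved directly, so this causes no difficulty.
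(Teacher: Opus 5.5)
Your proof is correct and follows the paper's route. Both take an $\alpha$-tower in $V$ with the relevant name in $M_0$ and $(M_i\cap\delta)_{i<\alpha}\in S$, pass to a $((M_i)_{i<\alpha},\dP)$-$\delta$-semigeneric $q\le p$, and use that $(M_i[G])_{i<\alpha}$ is an $\alpha$-tower of elementary submodels of $H(\Theta)[G]$ with $M_i[G]\cap\delta=M_i\cap\delta$. The differences are only presentational: you check closure under $\dot F^G$ directly via a density argument instead of arguing by contradiction through Proposition \ref{Proposition: Equivalence of Stationarity} in $V[G]$, and you explicitly put the semiproperness parameter $x$ into $M_0$, which the paper's write-up leaves implicit.
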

	
	\begin{proof}
	 By Proposition \ref{Proposition: Equivalence of Stationarity}, it suffices to show that in $V[G]$ where $G\subseteq \dP$ is generic over $V$, for every sufficiently large $\Theta$ and every $x\in H^{V[G]}(\Theta)$, there is an $\alpha$-tower $(M_i)_{i<\alpha}$ of countable elementary substructures of $H^{V[G]}(\Theta)$ such that $x,\delta\in M_0$ and $(M_i\cap\delta)_{i<\alpha}\in S$. To this end, suppose for the sake of contradiction that $\Theta$ is sufficiently large and $x\in H^{V[G]}(\Theta)$ witnesses that $S$ is nonstationary. 
	 
	Work in $V$. Let $\tau$ be a name for $x$ and let $p\in \dP$ force that $\tau$ witnesses the nonstationarity of $S$. We may assume that $\Theta$ is so large that $\tau,\dP\in H^V(\Theta)$. Since $S$ is stationary, there is an $\alpha$-tower $(M_i)_{i<\alpha}$ of countable elementary substructures of $H^V(\Theta)$ such that $\tau,\delta,\dP,p\in M_0$ and $(M_i\cap\delta)_{i<\alpha}\in S$.
		
		Since $\dP$ is $(\alpha,\delta)$-semiproper, we can find a condition $q\leq p$ which is $((M_i)_{i<\alpha},\dP)$-$\delta$-semigeneric. Let $H$ be any $\dP$-generic filter containing $q$. In $V[H]$, consider the sequence $(M_i[H])_{i<\alpha}$. It is easy to see that $(M_i[H])_{i<\alpha}$ is an $\alpha$-tower of countable elementary substructures of $H^{V[G]}(\Theta)$ and $\tau^H\in M_0[H]$. Furthermore, for every $i<\alpha$, $M_i[H]\cap\delta=M_i\cap\delta$ and so $(M_i[H]\cap\delta)_{i<\alpha}\in S$. But then $\tau^H$ does not witness that $S$ is nonstationary in $V[H]$, contradicting our assumptions.
	\end{proof}
	
	Now we can prove the other direction. Note that, clearly, if $\dP$ preserves stationary subsets of $P_{\omega_1}^{\alpha}(\delta)$, then $\dP$ preserves stationary subsets of $P_{\omega_1}^{\alpha}(A)$ for every set $A$ with $|A|=\delta$.
	
	\begin{mylem}\label{Lemma: Dagger for alpha-stationarity}
		Suppose $\kappa$ is supercompact and $\alpha$ is a countable ordinal. Let $\dP$ be a poset such that
		\begin{enumerate}
			\item $\dP$ has size $\kappa$, is $\kappa$-cc and forces $\check{\kappa}=\omega_2$;
			\item Whenever $G$ is $\dP$-generic and $\delta\geq\kappa$, there is a $\delta$-supercompact elementary embedding $j\colon V\to M$ such that $M[G] \models$ `` $j(\dP)/G$ is $(\alpha,\delta)$-semiproper".
		\end{enumerate}
		
		Let $G$ be $\dP$-generic. In $V[G]$, let $\dQ$ be a poset which preserves stationary subsets of $P_{\omega_1}^{\alpha}(\omega_1)$. Then $\dQ$ is $(\alpha,\omega_1)$-semiproper.
	\end{mylem}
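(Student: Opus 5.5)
We follow the classical proof of $(\dagger)$ from \cite{ForemanMagidorShelahMM}, adapted to $\alpha$-towers. Work in $V[G]$ and suppose toward a contradiction that $\dQ$ preserves stationary subsets of $P^\alpha_{\omega_1}(\omega_1)$ but is not $(\alpha,\omega_1)$-semiproper. Fix a large regular $\lambda$ with $\dQ\in H(\lambda)^{V[G]}$. Failure of semiproperness (for every $x$) yields a set
$$S=\{(N_i)_{i<\alpha}: (N_i)_{i<\alpha}\text{ an $\alpha$-tower in }H(\lambda)^{V[G]},\ \dQ\in N_0,\ \exists p\in N_0\cap\dQ\ \text{with no } ((N_i)_{i<\alpha},\dQ)\text{-semigeneric } q\le p\}$$
which is stationary in $P^\alpha_{\omega_1}(H(\lambda)^{V[G]})$, via Proposition \ref{Proposition: Equivalence of Stationarity}. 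By Lemma \ref{lemma: normality}, pressing down on the witness $p$ gives a stationary $S'\subseteq S$ and a fixed $p$. The standard move is then to project: for each tower $\bar N\in S'$ and each $q\le p$, $q$ fails to be semigeneric for some $N_i$. We want to produce a stationary subset of $P^\alpha_{\omega_1}(\omega_1)$ that $p$ forces nonstationary.

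The key step is the following. In $V[G][H]$ with $p\in H$, define the tower-club $C$ consisting of $\alpha$-towers $\bar N$ of $H(\lambda)^{V[G]}$ for which $\bar N[H]$ is an $\alpha$-tower and $N_i[H]\cap\omega_1=N_i\cap\omega_1$ for all $i$. If $T:=\{(N_i\cap\omega_1)_{i<\alpha}:\bar N\in S'\}$ were to remain stationary, we would use Proposition \ref{Proposition: Equivalence of Stationarity} in $V[G][H]$ to find a tower $\bar M$ of $H(\lambda')^{V[G][H]}$ with $H$ and the relevant names in $M_0$ and $(M_i\cap\omega_1)\in T$, and then try to relate $\bar M\cap H(\lambda)^{V[G]}$ to a member of $S'$ whose witness shows some $q\le p$ in $H$ is semigeneric — a contradiction. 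This is exactly where the classical $(\dagger)$ argument needs more than stationarity: one must reflect in $V[G]$ that the models are "internally" determined by their traces on $\omega_1$. In $\ZFC$ alone this fails, so this is where the supercompactness hypothesis (2) enters.

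To supply reflection we use the embedding from (2). We take $\delta=|H(\lambda)^{V[G]}|$ and $j\colon V\to M$ $\delta$-supercompact with $M[G]\models j(\dP)/G$ $(\alpha,\delta)$-semiproper. We force with $j(\dP)/G$ over $V[G]$ to $G'$ and lift $j$ to $j\colon V[G]\to M[G']$. Since $j(\dP)$ collapses $\delta$ to $\omega_1$ and $\dQ\in H(\lambda)^{V[G]}$, the lift allows us to use $j``H(\lambda)^{V[G]}\in M[G']$. By Lemma \ref{Lemma: Alpha-SP is Alpha-SSP}, applied in $M[G]$, $S'$ remains stationary after $j(\dP)/G$. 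Because $\delta$ has size $\omega_1$ there, stationarity of $S'$ transfers to stationarity of its trace on $\omega_1$, computed through a bijection, and $p$'s failure persists. In $M[G']$ we then apply $j(\dQ)$-reasoning: by elementarity $j(\dQ)$ preserves $\alpha$-stationary subsets of $\omega_1$ in $M[G']$. Combining with the $\alpha$-tower $\langle j``N_i\rangle$ and pulling back via elementarity, we get a tower in $S'$ and $q\le p$ semigeneric for it, which is the contradiction.

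The main obstacle is this transfer step. We need to show that, in $M[G']$, stationarity of the $\omega_1$-traces of $j``S'$ is preserved by $j(\dQ)$, and that this yields a semigeneric condition for some tower in $j(S')$. Here the continuity and successor clauses of $\alpha$-towers must be checked for $j$-images and for $N_i[H]$. I would first handle the case $\alpha=1$ to fix the bookkeeping, then check that the tower clauses pass through limits since everything is taken as unions.
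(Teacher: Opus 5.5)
Up to the step you call ``the main obstacle'', your outline follows the paper's proof. You press down to a stationary $S'$ of bad $\alpha$-towers of $H(\lambda)^{V[G]}$ with a fixed $p$. You take $j$ $\delta$-supercompact with $\delta=|H(\lambda)^{V[G]}|$, force with $j(\dP)/G$ to get $G'$, lift $j$, and keep $S'$ stationary by Lemma \ref{Lemma: Alpha-SP is Alpha-SSP}. Finally, you use that $H(\lambda)^{V[G]}$ has size $\omega_1$ in $M[G']$, so that by elementarity $j(\dQ)$ preserves the stationarity of $S'$.

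What is missing is the step that actually produces a semigeneric condition. The sentence ``combining with the $\alpha$-tower $\langle j[N_i]\rangle$ and pulling back via elementarity'' gives no reason why a $j(\dQ)$-generic should evaluate names for countable ordinals lying in $j[M_i]$ inside $M_i\cap\omega_1$. Preservation of stationarity by itself says nothing of the sort. Your second-paragraph attempt, using a $\dQ$-generic over $V[G]$, also looks in the wrong model: the generic that matters is one for $j(\dQ)$ over $M[G']$. So the proposal stops at its decisive step.

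The missing idea is to code that generic into a function and use stationarity to find models closed under it.
\begin{enumerate}
\item Let $K$ be $j(\dQ)$-generic over $M[G']$ with $j(p)\in K$. In $M[G'][K]$, let $F$ send each $\dQ$-name $\tau\in H(\lambda)^{V[G]}$ to $j(\tau)^K$; this is definable there because $j\restriction H(\lambda)^{V[G]}\in M[G']$ by $\delta$-supercompactness.
\item Since $S'$ is still stationary in $M[G'][K]$, Proposition \ref{Proposition: Equivalence of Stationarity} gives an $\alpha$-tower $(N_i)_{i<\alpha}$ with $F\in N_0$ and $\overline{M}=(M_i)_{i<\alpha}:=(N_i\cap H(\lambda)^{V[G]})_{i<\alpha}\in S'$.
\item If $\tau\in M_i$ names a countable ordinal, then $j(\tau)^K=F(\tau)\in N_i\cap\omega_1=M_i\cap\omega_1$.
\item Since $\overline{M}\in M[G']$, some $q\in K$ with $q\le j(p)$ forces this for all $i<\alpha$ and all such $\tau$. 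Because $j(M_i)=j[M_i]$ and $j$ fixes countable ordinals, $q$ is $(j(\overline{M}),j(\dQ))$-semigeneric in $M[G']$.
\item Elementarity of the lifted $j$ then yields a $(\overline{M},\dQ)$-semigeneric condition below $p$ in $V[G]$, contradicting $\overline{M}\in S'$.
\end{enumerate}
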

	
	\begin{proof}
		
		Seeking a contradiction, suppose that $G$ is a $\dP$-generic filter and $\dQ\in V[G]$ is a counterexample. Thus, $\dQ$ is a poset which preserves stationary subsets of $P_{\omega_1}^{\alpha}(\omega_1)$ but $\dQ$ is not $(\alpha,\omega_1)$-semiproper. Hence, for every sufficiently large $\Theta$ and every $x\in H(\Theta)$, there is an $\alpha$-tower $\overline{M}$ of countable elementary substructures and $p\in M_0\cap\dQ$ such that there does not exist any $(\overline{M},\dQ)$-semigeneric condition $q\leq p$. Thus, letting $\lambda:=(2^{|\dP*\dQ|})^+$, the set of all $\alpha$-towers for which there is no $(\overline{M},\dQ)$-semigeneric condition below some $p\in M_0\cap\dQ$ is stationary in $P_{\omega_1}^{\alpha}(H^{V[G]}(\lambda)))$ by Proposition \ref{Proposition: Equivalence of Stationarity}. By Lemma \ref{lemma: normality}, there is a stationary subset $S$ of $P_{\omega_1}^{\alpha}(H^{V[G]}(\lambda))$ and a single condition $p\in\dQ$ such that for every $\overline{M}\in S$, $p\in M_0$ and there is no $(\overline{M},\dQ)$-semigeneric condition $q\leq p$. By replacing $\dQ$ with the poset $\{r\in\dQ\;|\;r\leq q\}$ if necessary, we may assume for simplicity that $q$ is the weakest condition in $\dQ$.
		
		By the assumption of the lemma, there is a $\delta:=|H^{V[G]}(\lambda)|$-supercompact embedding $j\colon V\to M$ such that $j(\dP)/G$ is $(\alpha,\delta)$-semiproper in $M[G]$. Note that $S\in M[G]$. Let $H$ be any $j(\dP)$-generic filter extending $G$ obtained by forcing with $j(\dP)/G$. By Lemma \ref{Lemma: Alpha-SP is Alpha-SSP}, $S$ is stationary in $P_{\omega_1}^{\alpha}(H^{V[G]}(\lambda))$ in $M[H]$. Moreover, in $V[H]$, $j$ lifts to $j^+\colon V[G]\to M[H]$, since $j''G =G\subseteq H$.
		
		In $M[H]$, $j^+(\dQ)$ preserves stationary subsets of $P_{\omega_1}^{\alpha}(\omega_1)$ by elementarity. Again by elementarity, in $M[H]$, $j(\kappa)=\omega_2$ and so $|\delta|=|H^{V[G]}(\lambda)|=\omega_1$. Ergo, $S$ is morally a stationary subset of $P_{\omega_1}^{\alpha}(\omega_1)$ and so its stationarity is preserved by forcing with $j^+(\dQ)$. Let $K$ be any $j^+(\dQ)$-generic filter over $M[H]$ and work in $M[H][K]$.
		
		In $M[H][K]$, consider the function $F$ mapping a $\dQ$-name $\tau\in H^{V[G]}(\lambda)$ to $j^+(\tau)^K$ (recall that $j'' H(\lambda)\subseteq M$). Since $S\subseteq P_{\omega_1}^{\alpha}(H^{V[G]}(\lambda))$ is stationary, by Proposition \ref{Proposition: Equivalence of Stationarity}, there is an $\alpha$-tower $(N_i)_{i<\alpha}$ of countable elementary substructures of some sufficiently large $H(\Theta)$ such that $F\in N_0$ and $(N_i\cap H^{V[G]}(\lambda))_{i<\alpha}\in S$. For $i<\alpha$, let $M_i:=N_i\cap H^{V[G]}(\lambda)$.
		
		Hence, letting $\dot{F}$ be a name for $F$, there is a condition $q\in j^+(\dQ)$ which forces that there exists an $\alpha$-tower $(N_i)_{i<\alpha}$ of countable elementary substructures of $H^{M[H][\dot{G}_{j^+(\dQ)}]}(\Theta)$ such that $\dot{F}\in N_0$ and $(N_i\cap H^{V[G]}(\lambda))_{i<\alpha}=(M_i)_{i<\alpha}$. In particular, whenever $\tau\in M_i$ is a $\dQ$-name for a countable ordinal, $q$ forces that $\dot{F}(\tau)=j^+(\tau)^{\dot{G}_{j^+(\dQ)}}\in M_i$, since $M_i\cap\omega_1=N_i\cap\omega_1$, where $N_i$ is closed under $\dot{F}$. In particular, whenever $\tau\in j^+[M_i]$ is a $j^+(\dQ)$-name for a countable ordinal, $q$ forces that $\tau^{\dot{G}_{j^+(\dQ)}}\in j^+[M_i]$, since $j^+$ does not move countable ordinals. Ergo, $q$ is a $(j^+((M_i)_{i<\alpha}), j^+(\dQ))$-semigeneric condition.
		
		By elementarity of $j^+$, there is a $((M_i)_{i<\alpha},\dQ)$-semigeneric condition. However, this contradicts our assumptions. Thus, the lemma is proved.
	\end{proof}

\begin{mybem}
A straightforward adaption of the arguments in \cite[Ch. XIII, \S 1]{ShelahProperImproper} gives the equivalence between the conclusion of Lemma \ref{Lemma: Dagger for alpha-stationarity} and the $\alpha$-tower version of the semi-stationary reflection ($\alpha$-$\mathrm{SSR}$). Fix a countable ordinal $\alpha$. Recall for $x,y$ countable subsets of ordinals, we let $x\sqsubseteq y$ to denote that $x\subseteq y$ and $x\cap \omega_1 = y \cap \omega_1$. For an uncountable set $A$ containing $\omega_1$, we say $S\subseteq P^\alpha_{\omega_1}(A)$ is \emph{semi-stationary} if $\{(b_i)_{i<\alpha} \in P^\alpha_{\omega_1}(A): \exists (a_i)_{i<\alpha}\in S, \forall i\in \alpha, a_i\sqsubseteq b_i\}$ is a stationary subset of $P^\alpha_{\omega_1}(A)$. 
$\alpha$-$\mathrm{SSR}$ asserts that for large enough regular $\Theta$ and for any semi-stationary $S\subseteq P^{\alpha}_{\omega_1}(H(\Theta))$, there exists a set $W\subseteq H(\Theta)$ of size $\aleph_1$ containing $\omega_1$ such that $S\cap P^\alpha_{\omega_1}(W)$ is a semi-stationary subset of $P^\alpha_{\omega_1}(W)$.
\end{mybem}

	\section{An equiconsistency result concerning local ideal games}\label{Section: Weak Ideal Game}
	
	In this section, we investigate the (local) game $\Game^{\alpha}(I, \mathcal{X})$ in depth.	By restricting the set of functions which can be played, we obtain these ideals from much lower assumptions. Let $I_{\text{bdd}}^{\cof(\omega_1)}$ be the ideal of all sets $A\subseteq\omega_2$ such that $A\cap\cof(\omega_1)$ is bounded in $\omega_2$. Our main result for this section is as follows: 

	\begin{mysen}\label{Theorem: Equiconsistency for Game}
		The following are equiconsistent:
		\begin{enumerate}
			\item There exists a weakly compact cardinal.
			\item For every $\omega_2$-sized collection $\mathcal{X}$ of regressive functions on $\omega_2$ and every ordinal $\alpha<\omega_1$, player II has a winning strategy in $\Game^{\alpha}(I_{\text{bdd}}^{\cof(\omega_1)},\mathcal{X})$.
		\end{enumerate}
	\end{mysen}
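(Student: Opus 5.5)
Both directions need an argument: a lower bound showing that (2) yields a weakly compact cardinal in an inner model, and an upper bound forcing (2) from a weakly compact cardinal.

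\emph{From (2) to a weakly compact in $L$.} Assume (2) and suppose toward a contradiction that $\kappa:=\omega_2$ is not weakly compact in $L$. The goal is to find a small family $\mathcal{X}$ of regressive functions that player I can use to defeat every strategy of II, already in a game of length $1$ (or of length $\omega$). A natural source is a non-reflecting witness in $L$: if $\omega_2$ is not weakly compact in $L$, then either $\omega_2$ is not Mahlo in $L$, or there is an $L$-definable $\square$-like object. The simplest case is when $\omega_2$ is not Mahlo in $L$. Then there is a club $C\in L$ of singular-in-$L$ ordinals. For $\eta\in C\cap\cof(\omega_1)$, let $c_\eta\in L$ be the $<_L$-least cofinal subset of $\eta$ of order type $\cf^L(\eta)<\omega_2$. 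Using these sets, define $\omega_2$ many regressive functions: $f_\nu(\eta)$ is the $\nu$-th element of $c_\eta$. If II wins $\Game^{\omega}(I^{\cof(\omega_1)}_{\text{bdd}},\mathcal{X})$, then for an unbounded set of $\eta$ of cofinality $\omega_1$ all the relevant $f_\nu(\eta)$ are bounded, and a pressing-down style argument yields a contradiction. The general non-weakly-compact case is handled with Todorcevic's $\square(\omega_2)$, or with the $L$-coherent sequence obtained from the failure of weak compactness, which exists in $L$ by Jensen's theorem. Walks or $\rho$-functions along such a sequence give regressive functions that cannot be simultaneously bounded on an unbounded set of cofinality-$\omega_1$ points. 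This is the step I expect to be hardest: choosing the family $\mathcal{X}$ so that II's bounds $\xi_\beta$ force coherence to fail.

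\emph{From a weakly compact to (2).} Let $\kappa$ be weakly compact and force with $\Coll(\omega_1,<\kappa)$, so that $\kappa=\omega_2$ and $\CH$ holds. Fix a name for $\mathcal{X}$ of size $\kappa$ and a countable $\alpha$; it suffices to handle one such $\mathcal{X}$ at a time. Since the collapse is $\kappa$-cc, $\mathcal{X}$ is coded by a subset of $\kappa$ in $V$. Take a transitive $\kappa$-model $N$ containing that code, together with a weakly compact embedding $j\colon N\to N'$ with critical point $\kappa$. Then $j(\Coll(\omega_1,<\kappa))/G\cong\Coll(\omega_1,[\kappa,j(\kappa)))$ is countably closed in $N'[G]$, hence $(\alpha,\kappa)$-semiproper. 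Run the argument of Lemma \ref{Lemma: Ideal Existence} with $N,N'$ in place of $V,M$, restricted to the functions in $\mathcal{X}$ (all of which lie in $N[G]$). This yields an ideal $I$ on $\omega_2$, defined only on the subsets of $\omega_2$ lying in $N[G]$, such that II wins $\Game^{\alpha}(I,\mathcal{X})$. The ideal concentrates on $\cof(\omega_1)$, because $\kappa$ has cofinality $\omega_1$ in $N'[G]$. Moreover, the set II obtains is $I$-positive and therefore unbounded in $\cof(\omega_1)$, since $j$ fixes bounded sets. It follows that the strategy wins in the sense of $I^{\cof(\omega_1)}_{\text{bdd}}$. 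Because the winning set is built from the $\xi_\beta$ and $\mathcal{X}$ alone, the strategy makes sense in $V[G]$. The routine points to verify are that $N'[G]$ is closed under $\omega_1$-sequences of the relevant objects, so that the simulated play lies in $N'[G]$, and that II's moves are genuinely ordinals below $\kappa$.
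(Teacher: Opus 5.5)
Your upper bound is essentially the paper's argument: lift a weakly compact embedding, defined on a $\kappa$-model containing names for $\mathcal X$, through $\Coll(\omega_1,<\kappa)$, and use the countably closed quotient $\Coll(\omega_1,<j(\kappa))/G$ to decide the values $j^+(f_\beta)(\kappa)$. The paper does this directly: II builds a descending sequence of quotient conditions deciding $j^+(f_\beta)(\kappa)=\zeta_\beta$ and plays $\zeta_\beta+1$. This avoids your detour through Lemma \ref{Lemma: Ideal Existence}, which only makes sense if $N$ is, like the paper's, the collapse of some $X\prec H(\Theta)$, so that $(\alpha,\kappa)$-semiproperness can be formulated in $N'[G]$. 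Also, $\kappa$ acquires cofinality $\omega_1$ in $N'[H]$, not in $N'[G]$. These points are cosmetic.

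The lower bound has a genuine gap: nothing in your sketch turns a win for II into a contradiction. First, short games are useless for player I. By iterating Fodor's lemma on stationary subsets of $E^{\omega_2}_{\omega_1}$, II wins every game of finite length in ZFC, so ``length $1$'' cannot work. In your non-Mahlo case, take the functions $f_\nu(\eta)=$ ``the $\nu$-th element of $c_\eta$'', where $c_\eta$ is merely the $<_L$-least cofinal subset of $\eta$. These sets are not tied together across different $\eta$, and the winning set is only unbounded, not stationary. So bounding countably many of these functions contradicts nothing, and a pressing-down argument has nothing to act on. In the general case, even granting a $\square(\omega_2)$-sequence in $V$, the claim that walks along it yield regressive functions II cannot bound is asserted, not argued. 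That claim is the entire content of the lower bound.

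The missing idea is a mechanism converting a winning strategy for II into a reflection principle; the inner-model step is then Magidor's theorem. The paper does this with Lemma \ref{Lemma: Reduced Skolem Hull}.
\begin{enumerate}
\item Take countable $M_0\in M_1\prec H(\Theta)$ containing $\mathcal X$, and hence a winning strategy. Let player I enumerate $\mathcal X\cap M_i$ during the $i$-th block of $\omega$ moves.
\item By elementarity, II's answers in block $i$ lie in $M_i$. Hence every witnessing $\eta$ satisfies $f(\eta)<\delta_i:=\sup(M_i\cap\omega_2)$ for all $f\in\mathcal X\cap M_i$.
\item Suppose stationary $S_0,S_1\subseteq E^{\omega_2}_\omega$ do not reflect simultaneously. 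For each $\gamma\in E^{\omega_2}_{\omega_1}$, fix a club $c_\gamma\subseteq\gamma$ disjoint from $S_0$ or from $S_1$.
\item Put the functions $\xi\mapsto\min(c_\xi\smallsetminus(\beta+1))$ into $\mathcal X$, and choose the $M_i$ with $\delta_i\in S_i$.
\item Then for any witnessing $\eta>\delta_0,\delta_1$, $c_\eta$ is cofinal in both $\delta_i$. Since $c_\eta$ is closed, it contains both $\delta_i$, a contradiction.
\end{enumerate}
Thus II winning $\Game^{\omega+\omega}(I_{\text{bdd}}^{\cof(\omega_1)},\mathcal X)$ for all $\mathcal X$ gives simultaneous reflection of pairs of stationary subsets of $E^{\omega_2}_\omega$. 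Magidor's theorem then yields that $\omega_2$ is weakly compact in $L$. Note that the clubs $c_\eta$ and the models $M_i$ carry this argument; no $L$-definable coloring or square sequence is needed.
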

	
	We begin with the easier half of Theorem \ref{Theorem: Equiconsistency for Game} which is the upper bound. To this end, we show:
	
	\begin{mysen}\label{Theorem: Ideal Game Upper Bound}
		Suppose $\kappa$ is a weakly compact cardinal. Let $G$ be $\Coll(\omega_1,<\kappa)$-generic.
		
		In $V[G]$, whenever $\mathcal{X}$ is an $\omega_2$-sized collection of regressive functions on $\omega_2$ and $\alpha$ is a countable ordinal, player II wins $\Game^{\alpha}(I_{\text{bdd}}^{\cof(\omega_1)},\mathcal{X})$.
	\end{mysen}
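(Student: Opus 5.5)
Our approach uses weak compactness of $\kappa$ to reflect the (small) collection $\mathcal{X}$ down to an inaccessible $\lambda<\kappa$, and then uses the countable closure of the tail collapse to let player II answer. Work in $V[G]$, with $\kappa=\omega_2$ there. Fix $\mathcal{X}=\{g_\nu:\nu<\kappa\}$ and $\alpha<\omega_1$.

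First, in $V$ we choose names $\dot{\mathcal{X}}$ and $\dot g_\nu$. Since $\Coll(\omega_1,<\kappa)$ is $\kappa$-cc and has size $\kappa$, these names can be coded as a subset $A\subseteq V_\kappa$. By weak compactness, take a transitive $\kappa$-model $N$ with $A,\Coll(\omega_1,<\kappa)\in N$, $N^{<\kappa}\subseteq N$, and an elementary $j\colon N\to N'$ with critical point $\kappa$ and $N'$ transitive. Then $j(\Coll(\omega_1,<\kappa))=\Coll(\omega_1,<j(\kappa))\cong\Coll(\omega_1,<\kappa)\times\Coll(\omega_1,[\kappa,j(\kappa)))$. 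Let $H$ be generic for the tail collapse over $V[G]$ and lift $j$ to $j\colon N[G]\to N'[G][H]$. The tail is countably closed in $V[G]$, and also in $N'[G]$ because $N'^{<\kappa}\subseteq N'$.

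Player II's strategy is as follows. Each $g\in\mathcal{X}$ is in $N[G]$, and $j(g)(\kappa)<\kappa$ since $g$ is regressive. Given that I has played $g_0,\dots,g_\beta$, II maintains a descending sequence $(h_\beta)_\beta$ in the tail collapse, chosen in $V[G]$ with continuous lower bounds at limits, which is possible by countable closure. At stage $\beta$, II extends to $h_\beta$ deciding the value of $j(\dot g_\beta)(\kappa)$ as some $\xi_\beta<\kappa$ and plays $\xi_\beta$. The key point is that the name $j(\dot g_\beta)$ is computable from $g_\beta$ alone, using $j$ and the name, which all live in $V[G]$. Hence the strategy is well defined in $V[G]$. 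The decision by a condition in the tail makes sense because $j(g_\beta)\in N'[G][H]$ is evaluated by the tail generic.

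For the win: after $\alpha$ rounds, let $h$ be a lower bound of $(h_\beta)_{\beta<\alpha}$. Then $h$ forces that $j(g_\beta)(\kappa)<\xi_\beta=j(\xi_\beta)$ for all $\beta<\alpha$. Since $\alpha$ is countable, $j(\langle g_\beta:\beta<\alpha\rangle)=\langle j(g_\beta)\rangle_\beta$. Moreover $\kappa$ has cofinality $\omega_1$ in $N'[G][H]$, since $j(\kappa)=\omega_2$ there and $\kappa$ is collapsed to $\omega_1$. It also lies above any $j(\gamma)=\gamma$ with $\gamma<\kappa$. Let $X=\{\eta\in\cof(\omega_1):\forall\beta<\alpha\;g_\beta(\eta)<\xi_\beta\}$. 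Then $h$ forces $\kappa\in j(X)$. If $X$ were bounded by some $\gamma<\kappa$, then $j(X)$ would be bounded by $\gamma$, which is a contradiction. So $X\in (I_{\text{bdd}}^{\cof(\omega_1)})^+$.

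The main obstacle is the lifting with only a weakly compact cardinal. We need $N$ to contain $G$-names for all of $\mathcal{X}$ and the play, which requires $|\mathcal{X}|=\omega_2$ and the $\kappa$-cc coding. We also need the lifted $j$ to exist in an extension by a countably closed forcing over $V[G]$, not merely over $N[G]$. Here we intend to use that $\Coll(\omega_1,[\kappa,j(\kappa)))^{N'}$ is countably closed in $V[G]$. The plan is to argue that $H$ need only be $N'[G]$-generic and that II's choices of $h_\beta$ can be made meeting dense sets of $N'[G]$ deciding the relevant values. Since $\omega_1$-sequences are not needed, the closure of $N'$ under $<\kappa$-sequences makes countable descending sequences have lower bounds in $N'[G]$. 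No genericity beyond deciding countably many statements is required, because we only evaluate forcing relations computed in $N'[G]$.
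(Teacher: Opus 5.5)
Your argument is essentially the paper's proof. It uses a weak-compactness embedding with critical point $\kappa$ of a $\kappa$-model containing names for $\mathcal{X}$, lifted through the countably closed tail $\Coll(\omega_1,<j(\kappa))/G$. Player II builds a descending sequence of tail conditions deciding $j(f_\beta)(\kappa)$, and the proof finishes by elementarity. The paper's use of the induced filter $U$ and Hauser's form $j\colon M\to N$ with $j,M\in N$ is cosmetic. Your opening sentence about reflecting to an inaccessible $\lambda<\kappa$ does not describe what you actually do.

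There is one slip you must fix. If $h_\beta$ decides $j(\dot g_\beta)(\kappa)=\xi_\beta$ and II plays $\xi_\beta$ itself, then $h$ forces $j(g_\beta)(\kappa)=\xi_\beta$, not $<\xi_\beta$. Hence $\kappa\notin j(X)$, because the game demands strict inequality. For example, against a function that is eventually constant with value $\xi$, your strategy plays $\xi$ and loses. II must play $\xi_\beta+1$, exactly as the paper does.

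Two small justifications are also missing.
\begin{enumerate}
\item To apply $j$ to $\langle g_\beta:\beta<\alpha\rangle$ and to $X$, these must lie in $N[G]$. This holds because $N[G]$ is closed under countable sequences in $V[G]$: $N^{<\kappa}\subseteq N$ and the collapse is $\kappa$-cc. Alternatively, the countable sequence of indices of the played functions already lies in $V$, hence in $N$. The paper states this point explicitly.
\item $\cf(\kappa)=\omega_1$ in $N'[G][H]$ needs more than $|\kappa|=\omega_1$ there. It also requires that the tail collapse adds no new $\omega$-sequences over $N'[G]$, so that $\kappa$ does not acquire countable cofinality.
\end{enumerate}
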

	
	\begin{proof}
		In $V$, let $\dot{\mathcal{X}}$ be a $\kappa$-sized collection of $\Coll(\omega_1,<\kappa)$-names such that $\mathcal{X}=\{\tau^G\;|\;\tau\in\dot{\mathcal{X}}\}$. Also in $V$, suppose the following:
		\begin{enumerate}
			\item $X\prec H(\Theta)$ has size $\kappa$ containing $\kappa, \dot{\mathcal{X}}$ and ${}^{<\kappa} X\subseteq X$, $M$ is the transitive collapse of $X$;
			\item $N$ is a transitive set and $N^{<\kappa}\subseteq N$;
			\item $j\colon M\to N$ is an elementary embedding with critical point $\kappa$ and $j,M\in N$.
		\end{enumerate}
		
		It is well-known that such objects exist because of the weak compactness of $\kappa$ (see  \cite[Theorem 16.1]{CummingsHandbook} and \cite{Hauser}). From the properties of $M$ and $N$ it follows that $\Coll(\omega_1,<j(\kappa))=(\Coll(\omega_1,<j(\kappa)))^N$. Let $H$ be $\Coll(\omega_1,<j(\kappa))/G$-generic over $V[G]$. It follows that, in $V[H]$, $j$ lifts to $j^+\colon M[G]\to N[H]$. Thus, in $V[H]$, we can define a filter $U$ over $\kappa$ by letting $A\in U$ if and only if $A\in M[G]$ and $\kappa\in j^+(A)$. Then for any $A\subseteq\kappa$ with $A\in M[G]$, either $A\in U$ or $\kappa\smallsetminus A\in U$.
		
		Now let $f\in\mathcal{X}$. Then $f\in M[G]$ and so $j^+(f)\in N[H]$ is a regressive function on $j(\kappa)$. Let $\zeta:=j^+(f)(\kappa)$. Then, by the definition of $U$, $\zeta$ is the unique ordinal in $\kappa$ such that $\{\alpha<\kappa\;|\;f(\alpha)=\zeta\}\in U$.
		
		Fix a $\Coll(\omega_1,<j(\kappa))/G$-name $\dot{U}$ for $U$. Let player II play as follows: In any run $(f_{\beta},\xi_{\beta})_{\beta<\alpha}$ of $\Game^{\alpha}(I_{\text{bdd}}^{\cof(\omega_1)},\mathcal{X})$, player II constructs, on the side, a descending sequence $(p_{\beta})_{\beta<\alpha}$ of conditions in $\Coll(\omega_1,<j(\kappa))/G$ such that, for every $\beta<\alpha$, $p_{\beta}$ forces that $\zeta_{\beta}$ is the unique ordinal such that $\{\alpha<\kappa\;|\;f_{\beta}(\alpha)=\zeta_{\beta}\}\in\dot{U}$ and $\xi_{\beta}=\zeta_{\beta}+1$. Let us show that this is possible: After $(f_{\gamma},\xi_{\gamma})_{\gamma<\beta}$ has been played and $(p_{\gamma})_{\gamma<\beta}$ has been constructed on the side, let $p_{\beta}'$ be a lower bound of $(p_{\gamma})_{\gamma<\beta}$. Let player I play $f_{\beta}\in\mathcal{X}$. It follows that $p_{\beta}'$ forces that there is a unique ordinal $\dot{\zeta}_{\beta}$ such that $\{\alpha<\kappa\;|\;f_{\beta}(\alpha)=\zeta_{\beta}\}\in\dot{U}$. Let $p_{\beta}\leq p_{\beta}'$ decide the value of $\dot{\zeta}_{\beta}$ to be $\zeta_{\beta}$ and let player II play $\xi_{\beta}:=\zeta_{\beta}+1$.
		
		Assume that $(f_{\beta},\xi_{\beta})_{\beta<\alpha}$ is a run of the game where player II played according to this strategy and let $(p_{\beta})_{\beta<\alpha}$ be the sequence of conditions constructed by player II. We will show that player II wins. To see this, let $p_{\alpha}$ be a lower bound of $(p_{\beta})_{\beta<\alpha}$. Let $H$ be $\Coll(\omega_1,<j(\kappa))/G$-generic over $V[G]$ containing $p_{\alpha}$.
		
		Since $\Coll(\omega_1,<j(\kappa))/G$ is countably closed, the sequence $(f_{\beta},\xi_{\beta})_{\beta<\alpha}$ is an element of $V[G]$ and, in particular, an element of $M[G]$ as $V[G]\models {}^\omega M[G]\subseteq M[G]$. It follows that for any $\gamma<\kappa$,
		$$N[H]\models\cf(\kappa)=\omega_1\wedge\kappa>\gamma\wedge\forall\beta<\alpha(j^+(f_{\beta})(\kappa)=\zeta_{\beta})$$
		By the elementarity of $j^+$, the set of all $\eta\in\kappa$ such that $\cf(\eta)=\omega_1$ and for all $\beta<\alpha$, $f_{\beta}(\eta)=\zeta_{\beta}<\xi_{\beta}$, is unbounded in $\kappa$.
	\end{proof}
	
	Now we concern ourselves with the lower bound. We begin with a technical lemma which allows us to show that a winning strategy for player II in $\Game^{\alpha}(I)$ enables the construction of Chang-type models. For concretness, we focus on the case $\kappa=\omega_2$ and $I=I_{\text{bdd}}^{\cof(\omega_1)}$, although the methods are more general.
	
	\begin{mylem}\label{Lemma: Reduced Skolem Hull}
		Suppose that $\alpha<\omega_1$ is an ordinal such that player II has a winning strategy in $\Game^{\omega\alpha}(I_{\text{bdd}}^{\cof(\omega_1)},\mathcal{X})$. Then whenever $(M_i)_{i<\alpha}$ is an $\alpha$-tower of countable elementary substructures of $H(\Theta)$ for regular $\Theta\geq\omega_3$ with $\mathcal{X},\alpha\in M_0$, there is $X\subseteq E_{\omega_1}^{\omega_2}$ unbounded such that for every $\eta\in X$, $i<\alpha$ and $f\in M_i\cap\mathcal{X}$, $f(\eta)<\sup(M_i\cap\omega_2)$.
	\end{mylem}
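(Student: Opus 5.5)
Fix a winning strategy $\sigma$ for player II in $\Game^{\omega\alpha}(I_{\text{bdd}}^{\cof(\omega_1)},\mathcal{X})$. Since $\mathcal{X}$ and $\alpha$ lie in $M_0$ and $\Theta\geq\omega_3$, elementarity lets us take $\sigma\in M_0$; for instance, choose the least one in a definable well-order, or, following the proof of Proposition \ref{proposition: equivalence}, replace $\Theta$ by a larger $H(\Theta')$ and intersect back down. We then let player I play, between rounds $\omega i$ and $\omega(i+1)$, an enumeration of $\mathcal{X}\cap M_{i+1}$ of length $\omega$. This is possible because $M_{i+1}$ is countable, and we may need to repeat functions if $\mathcal{X}\cap M_{i+1}$ is finite. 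At limit stages $i$ the tower is continuous, so $\mathcal{X}\cap M_i=\bigcup_{i'<i}\mathcal{X}\cap M_{i'}$ is already covered by earlier blocks. Block $0$ enumerates $\mathcal{X}\cap M_0$, or equivalently $\mathcal{X}\cap M_1$, which is a superset.

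The key bookkeeping point is to make each enumeration $e_{i+1}\colon\omega\to\mathcal{X}\cap M_{i+1}$ definable inside $M_{i+2}$, or even inside $M_{i+1}$. By the tower condition, $(M_j)_{j\leq i+1}\in M_{i+2}$, so $e_{i+1}$ can be taken as the least enumeration of $\mathcal{X}\cap M_{i+1}$ in a fixed well-order. Since $\sigma\in M_0$, the partial play through block $i$ is then an element of $M_{i+2}$, and so II's responses $\xi_{\omega i+n}$ belong to $M_{i+2}\cap\omega_2$. That is off by one from what we need, namely bounding the functions of $M_i$ by $\sup(M_i\cap\omega_2)$. To fix this, block $i$ should enumerate only the functions in $\mathcal{X}\cap M_i$ for successor $i$, using an enumeration that lies in $M_i$. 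This is legitimate because $M_i\ni (M_j)_{j<i}$, but $M_i$ does not contain $M_i$ itself. So instead we let block $i+1$ enumerate $\mathcal{X}\cap M_i$ via an enumeration in $M_{i+1}$. The responses in block $i+1$ then lie in $M_{i+1}$, and a function $f\in M_i$ is bounded by some $\xi\in M_{i+1}$. We only need the bound $\sup(M_{i'}\cap\omega_2)$ for some index $i'$ with $f\in M_{i'}$, and the statement asks for $f\in M_i\Rightarrow f(\eta)<\sup(M_i\cap\omega_2)$. Since $f\in M_i$ implies $f\in M_{i+1}$, the shift loses nothing if we index correctly: every $f\in M_i$ with $i$ a successor already lies in $M_{i'+1}$ for $i'+1=i$, and block $i$ enumerates $\mathcal{X}\cap M_{i'}$. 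The cleanest fix is to let block $i$, for successor $i$, enumerate $\mathcal{X}\cap M_i$ via an enumeration chosen inside $M_{i+1}$, accept that its responses lie in $M_{i+1}$, and then handle this slack separately. I expect this indexing and definability issue to be the main delicate step, and I would resolve it by writing $\omega\alpha$ so that every $f\in M_i$ is played in a block whose responses are computed inside $M_i$. Using $\omega\alpha$ rounds rather than $\alpha$ gives exactly this room. Alternatively, note that for limit $i$ the sup over $M_i$ is the sup over earlier $M_{i'}$, and handle successor $i=i'+1$ via enumerations of $\mathcal{X}\cap M_{i'}$ placed in block $i'$ while sliding responses one level.

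Once each response $\xi$ to a function $f\in M_i$ is shown to lie in $M_i\cap\omega_2$, hence below $\sup(M_i\cap\omega_2)$, we use that $\sigma$ is winning. The resulting set
$$X:=\{\eta\in\omega_2:\forall\beta<\omega\alpha\;f_\beta(\eta)<\xi_\beta\}$$
is $I_{\text{bdd}}^{\cof(\omega_1)}$-positive, so $X\cap E^{\omega_2}_{\omega_1}$ is unbounded. For any $\eta$ in this set and $f\in M_i\cap\mathcal{X}$, $f$ was played at some round $\beta$ whose response satisfies $\xi_\beta<\sup(M_i\cap\omega_2)$. Therefore $f(\eta)<\xi_\beta<\sup(M_i\cap\omega_2)$, as required.
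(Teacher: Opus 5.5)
There is a genuine gap, exactly at the step you call delicate: you never prove that the response to a function $f\in\mathcal{X}\cap M_i$ lies in $M_i$.

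Each concrete scheme you write down only gives $f(\eta)<\sup(M_{i+1}\cap\omega_2)$. This holds both for block $i$ listing $\mathcal{X}\cap M_{i+1}$ (responses in $M_{i+2}$) and for block $i+1$ listing $\mathcal{X}\cap M_i$ (responses in $M_{i+1}$). Your assertion that ``the shift loses nothing'' is false. If $f$ first appears in $M_{i'+1}$, it need not belong to $M_{i'}$, so a block listing $\mathcal{X}\cap M_{i'}$ never plays it. Likewise, listing $\mathcal{X}\cap M_1$ in block $0$ is not ``equivalent'' to listing $\mathcal{X}\cap M_0$, because the functions in $M_0$ would then only be bounded by $\sup(M_1\cap\omega_2)$. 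Your sentence about choosing $\omega\alpha$ ``so that every $f\in M_i$ is played in a block whose responses are computed inside $M_i$'' restates the goal rather than proving it. Your final paragraph is conditional on that unproved claim.

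The missing observation is that you never need an enumeration of $\mathcal{X}\cap M_i$ to lie in $M_i$; as you note, that is impossible. You only need each \emph{partial play} to lie in $M_i$, because $\xi_\beta=\sigma((f_\gamma)_{\gamma\le\beta})$ and $\sigma\in M_0\subseteq M_i$.

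So let block $i$ list $\mathcal{X}\cap M_i$ itself, with the enumerations chosen canonically from $(M_j)_{j\le i}$, e.g.\ via a fixed well-order in $M_0$. Take $i=0$ or $i=i'+1$, and $\beta=\omega i+n$. Then $(f_\gamma)_{\gamma\le\beta}$ consists of two parts:
\begin{itemize}
\item the earlier blocks, which are definable from $(M_j)_{j\le i'}\in M_i$ and hence belong to $M_i$ by elementarity;
\item the finitely many functions $f_{\omega i},\dots,f_{\omega i+n}\in M_i$.
\end{itemize}
So the partial play is in $M_i$, and therefore $\xi_\beta\in M_i\cap\omega_2$. This is exactly what the $\omega$ rounds per block buy.

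For limit $i$ nothing further is needed. Each $f\in\mathcal{X}\cap M_i$ lies in some $M_{i'}$ with $i'<i$, and $\sup(M_{i'}\cap\omega_2)\le\sup(M_i\cap\omega_2)$.

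The off-by-one you detected is an artifact of requiring the entire block enumeration $e_{i+1}$ to lie in the model, rather than just its finite initial segments on top of the earlier blocks. With this observation in place, your last paragraph goes through as written, and the argument is the paper's.
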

	
	\begin{proof}
		Let $\sigma\in M_0$ be a winning strategy for player II in $\Game^{\omega\alpha}(I_{\text{bdd}}^{\cof(\omega_1)},\mathcal{X})$, which exists by the elementarity of $M_0$.
		
		Let $(f_{\beta},\xi_{\beta})_{\beta<\omega\alpha}$ be a run of the game in which player II plays according to $\sigma$ and for every ordinal $\gamma<\alpha$, $\{f_{\beta}\;|\;\beta<\omega(i+1)\}$ enumerates $\mathcal{X}\cap M_i$. It follows that for every $\beta<\omega(i+1)$, $(f_{\gamma})_{\gamma\leq\beta}\in M_i$ and so $\xi_{\beta}\in M_i$. In particular, $\sup_{\beta<\omega(i+1)}\xi_{\beta}\leq \sup(M_{\gamma}\cap\omega_2)$.
		
		Now let $X\subseteq E_{\omega_1}^{\omega_2}$ be unbounded and witness that player II wins the run of the game. It follows that for every $\eta\in X$, $i<\alpha$ and $f\in M_i\cap\mathcal{X}$, $f$ equals $f_{\beta}$ for some $\beta<\omega(i+1)$ and so $f(\eta)=f_{\beta}(\eta)<\xi_{\beta}<\sup(M_i\cap\omega_2)$.
	\end{proof}
	
	Using the following well-known observation, we can connect the previous lemma to variants of Chang's conjecture:
	
	\begin{mylem}\label{Lemma: Skolem Hull Presentation}
		Let $\Theta$ be a regular cardinal. Let $\mathcal{A}$ be an algebra on $H(\Theta)$ containing a well order $<_{\Theta}$ of $H(\Theta)$ and $M\prec \mathcal{A}$. Let $x\subseteq\delta$ be countable, where $\delta\in M$. Then
		$$\Hull^{\mathcal{A}}(M\cup x)=\{f(y)\;|\;f\colon \delta^{<\omega}\to V,f\in M,y\in[x]^{<\omega}\}$$
		Moreover, for any $A\in M$,
		$$\Hull^{\mathcal{A}}(M\cup x)\cap A=\{f(y)\;|\;f\colon\delta^{<\omega}\to A,f\in M, y\in[x]^{<\omega}\}$$
	\end{mylem}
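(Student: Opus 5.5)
The plan is to prove the two inclusions separately. The first step is to fix the Skolem functions. Since $<_{\Theta}$ is in the language of $\mathcal{A}$, every element of $\Hull^{\mathcal{A}}(M\cup x)$ has the form $t^{\mathcal{A}}(m_1,\dots,m_k,\xi_1,\dots,\xi_n)$, where $t$ is a term built from the definable Skolem functions, each $m_i\in M$, and each $\xi_j\in x$. These Skolem functions are definable in $\mathcal{A}$ using the well order, namely as the $<_\Theta$-least witness.

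For the inclusion ``$\subseteq$'', take such an element. Define $f\colon\delta^{<\omega}\to V$ as follows. Let $f(\xi_1,\dots,\xi_n)=t^{\mathcal{A}}(m_1,\dots,m_k,\xi_1,\dots,\xi_n)$ whenever the argument has length $n$, and let $f$ take an arbitrary fixed value (say $\emptyset$) otherwise. This $f$ is definable in $\mathcal{A}$ from the parameters $m_1,\dots,m_k,\delta$, all of which lie in $M$. It lies in $H(\Theta)$ provided $\Theta$ is large enough relative to $\delta$; we should state this, and in the paper's uses $\delta<\Theta$ and $|\delta^{<\omega}|<\Theta$, so it is fine. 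Since $M\prec\mathcal{A}$, we get $f\in M$. Then the element equals $f(y)$ with $y=(\xi_1,\dots,\xi_n)\in[x]^{<\omega}$. One small point: $[x]^{<\omega}$ consists of finite sets, so I identify a finite set with its increasing enumeration. Because $f$ is allowed to depend on the ordering of the arguments, I would first rearrange the $\xi_j$ into increasing order and absorb the permutation into $f$.

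For the inclusion ``$\supseteq$'', let $f\in M\subseteq\Hull^{\mathcal{A}}(M\cup x)$ and $y\in[x]^{<\omega}$. Evaluating $y\mapsto f(y)$ is definable in $\mathcal{A}$, since application of a function is $\Delta_0$. The hull is an elementary substructure containing both $f$ and $y$: it contains $y$ because it is closed under forming finite tuples of its elements. Hence $f(y)$ is in the hull.

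The ``moreover'' clause needs one more step. Given $z=f(y)\in A$ with $f\in M$ arbitrary, define $f'$ by $f'(u)=f(u)$ when $f(u)\in A$, and $f'(u)=$ the $<_\Theta$-least element of $A$ otherwise. Here $A\ne\emptyset$ because $z\in A$. Then $f'$ is definable from $f$ and $A$, so $f'\in M$ by elementarity. It maps $\delta^{<\omega}$ into $A$, and $f'(y)=z$. The reverse inclusion follows immediately from the first part.

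No step is a real obstacle. The only care needed is in the bookkeeping around term evaluation and in making sure the relevant functions lie in $H(\Theta)$.
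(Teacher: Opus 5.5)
Your proof is correct. For the inclusion $\Hull^{\mathcal{A}}(M\cup x)\subseteq M^x$, where $M^x$ denotes the right-hand side, you take a different route from the paper.

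The paper never unpacks what elements of the hull look like. It observes that $M^x\subseteq\Hull^{\mathcal{A}}(M\cup x)$ trivially. It then checks with the Tarski--Vaught criterion that $M^x$ is itself an elementary substructure of $\mathcal{A}$ containing $M\cup x$. If $\mathcal{A}\models\exists v\,\phi[f_0(y),\dots,f_{n-1}(y),v]$, after merging the $y_i$ into a single $y$, the map sending $z$ to the $<_\Theta$-least witness for $f_0(z),\dots,f_{n-1}(z)$ lies in $M$ and supplies a witness in $M^x$. Minimality of the hull then gives equality.

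You instead start from the representation of the hull as the set of values of (definable) Skolem terms at parameters from $M\cup x$. You then fold the finitely many parameters from $M$ into one function $f\in M$ by elementarity.

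Both arguments use the well order at the same point. Your representation of the hull is itself proved by a Tarski--Vaught argument with $<_\Theta$-least witnesses, so you are citing the general fact that the paper effectively reproves in this special case. Your version makes explicit why each hull element has the form $f(y)$. The paper's version is self-contained: it needs only that the hull is the least elementary substructure containing $M\cup x$.

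Two small remarks. First, your caveat about $f\in H(\Theta)$ is automatic. Since $\delta\in M\subseteq H(\Theta)$ and $\Theta$ is regular (and uncountable, as in every use), we get $|\delta^{<\omega}|<\Theta$, and regularity then puts $f$ into $H(\Theta)$. The paper's witness function needs exactly the same observation. Second, besides reordering the $\xi_j$, you should also collapse repetitions among them so that $y$ really corresponds to an element of $[x]^{<\omega}$. This is equally harmless bookkeeping.
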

	
	\begin{proof}
		Clearly, the set
		$$M^x:=\{f(y)\;|\;f\colon \delta^{<\omega}\to V, f\in M, y\in[x]^{<\omega}\}$$
		is contained in $\Hull^{\mathcal{A}}(M\cup x)$, so we only have to show that $M^x$ is an elementary substructure of $\mathcal{A}$ containing $M$ and $x$. The latter statement is obvious. To see that $M^x\prec\mathcal{A}$, suppose that $\mathcal{A}\models\exists x\phi[f_0(y_0),\dots,f_{n-1}(y_{n-1}),x]$. We may suppose that $y_0=y_1=\dots=y_{n-1}=:y$. Let $f$ be the function mapping a finite set $z$ of size $|y|$ to the $<_{\Theta}$-least element $x$ satisfying $\phi[f_0(z),\dots,f_{n-1}(z),x]$ if it exists, $0$ otherwise. Then $f\in M$ and therefore, $f(y)\in M^x$. By construction, $\mathcal{A}\models\phi[f_0(y_0),\dots,f_{n-1}(y_0),f(y)]$ and so we are done by Tarski's criterion.
		
		To see the ``moreover'' part, note that for any function $f$, we can define $f_A$ with $f_A(z)=f(z)$ if $f(z)\in A$ and $f_A(z)$ the $<_{\Theta}$-least element of $A$ otherwise. If $f\in M$, so is $f_A$ by elementarity.
	\end{proof}
	
	The previous lemma allows us to show that, in these specific cases, the Skolem hull of a sufficiently nice structure is definable without appealing to the overarching structure $H(\Theta)$ and can thus be reasoned about by elementary submodels of $H(\Theta)$.
	
	\begin{mylem}\label{Lemma: Alpha-Chang's Conjecture}
		Suppose that $\alpha<\omega_1$ is an ordinal such that player II has a winning strategy in $\Game^{\omega\alpha}(I_{\text{bdd}}^{\cof(\omega_1)})$. Then whenever $\mathcal{A}$ is an algebra on $H(\Theta)$ for regular $\Theta\geq\omega_3$ containing a well-oder $<_{\Theta}$ of $H(\Theta)$ and $(M_i)_{i<\alpha}$ is an $\alpha$-tower of countable elementary substructures of $H(\Theta)$ with $\alpha\in M_0$, there is $X\subseteq E_{\omega_1}^{\omega_2}$ unbounded such that for every $\eta\in X$ and $i<\alpha$, $\Hull^{\mathcal{A}}(M_i\cup\{\eta\})\cap\eta\subseteq\sup(M_i\cap\omega_2)$.
	\end{mylem}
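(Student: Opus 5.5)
The plan is to apply Lemma \ref{Lemma: Reduced Skolem Hull} to a suitably chosen collection $\mathcal{X}$ of regressive functions, and then use Lemma \ref{Lemma: Skolem Hull Presentation} to turn the conclusion into the statement about Skolem hulls. Since II wins $\Game^{\omega\alpha}(I_{\text{bdd}}^{\cof(\omega_1)})$ with $\mathcal{X}$ ranging over all regressive functions, II also wins with respect to any subcollection $\mathcal{X}$. So it suffices to find $\mathcal{X}$ with two properties: $\mathcal{X}$ is definable from parameters in $M_0$, so that $\mathcal{X}\in M_0$; and for every $i<\alpha$ and every $\xi\in\Hull^{\mathcal{A}}(M_i\cup\{\eta\})\cap\eta$ there is some $g\in M_i\cap\mathcal{X}$ with $\xi\le g(\eta)$.

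Here is the construction. Given a function $h\colon\omega_2^{<\omega}\to V$ and a finite set $y\subseteq\omega_2$, set
\[g_{h,y}(\eta)=\sup\{h(y\cup\{\eta\})\cap\eta\}\]
when $h(y\cup\{\eta\})$ is an ordinal below $\eta$, and $0$ otherwise. More simply, set $g_{h,y}(\eta)=h(y\cup\{\eta\})$ when that value is an ordinal $<\eta$, and $0$ otherwise. Each $g_{h,y}$ is regressive, and we let $\mathcal{X}$ be the collection of all such $g_{h,y}$. This is a definable class of functions in $H(\Theta)$, and every ingredient is available in $M_0$; a technicality is that I would take $\Theta$ large, or restrict $h$ to functions into $\omega_2$ so that $\mathcal{X}\subseteq H(\Theta)$. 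Then $\mathcal{X}\in M_0$, and by elementarity $g_{h,y}\in M_i$ whenever $h,y\in M_i$. Note that $\mathcal{X}$ need not have size $\omega_2$ here, because Lemma \ref{Lemma: Reduced Skolem Hull} as stated does not demand that.

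Now apply Lemma \ref{Lemma: Reduced Skolem Hull} to the tower, with $\mathcal{X}$ and $\alpha$ in $M_0$. This yields an unbounded $X\subseteq E_{\omega_1}^{\omega_2}$ such that $g(\eta)<\sup(M_i\cap\omega_2)$ for all $\eta\in X$, $i<\alpha$ and $g\in M_i\cap\mathcal{X}$. Fix $\eta\in X$ and $\xi\in\Hull^{\mathcal{A}}(M_i\cup\{\eta\})\cap\eta$. By Lemma \ref{Lemma: Skolem Hull Presentation}, applied with $x=\{\eta\}$ and $\delta=\omega_2$ (and $A=\omega_2$ for the "moreover" clause), we can write $\xi=h(z)$ with $h\in M_i$ and $z\in[\{\eta\}]^{<\omega}$. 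Here $M_i\prec\mathcal{A}$ is needed, and I would arrange this by taking the tower elementary in the expanded structure, or by noting that $\mathcal{A}\in M_0$ may be assumed. Then $\xi=g_{h,\emptyset}(\eta)$ or $\xi$ is constant in $M_i$, so $\xi<\sup(M_i\cap\omega_2)$.

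The main obstacle is the mismatch between $H(\Theta)$-elementarity and $\mathcal{A}$-elementarity. Lemma \ref{Lemma: Skolem Hull Presentation} requires $M\prec\mathcal{A}$ and uses functions in $M$. If $\mathcal{A}\notin M_0$, I would first replace the tower by $M_i'=\Hull(M_i\cup\{\mathcal{A}\})$, which can enlarge $\sup(M_i\cap\omega_2)$. I would therefore rather read the statement as tacitly assuming $\mathcal{A}\in M_0$, or as assuming the tower is elementary in $\mathcal{A}$ and $\mathcal{A}$ is definable in $H(\Theta)$. Under either reading the argument above goes through directly.
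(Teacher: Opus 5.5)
Your proposal is correct and follows the paper's route. You apply Lemma~\ref{Lemma: Reduced Skolem Hull}; your $\mathcal{X}$ of functions $g_{h,\emptyset}$ with $h$ into $\omega_2$ is exactly the class of all regressive functions, which is what the paper uses. You then write each $\xi\in\Hull^{\mathcal{A}}(M_i\cup\{\eta\})\cap\eta$ as $h(z)$ with $h\in M_i$ via Lemma~\ref{Lemma: Skolem Hull Presentation}, and bound it using the regressive truncation of $h$, which is the paper's $f'$.

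Your remark that $M_i\prec\mathcal{A}$ is needed is well taken. The paper uses it tacitly, and without it the statement fails outright: if $\mathcal{A}$ has a constant function with value some $\zeta\geq\sup(M_0\cap\omega_2)$, then every $\eta>\zeta$ violates the conclusion at $i=0$.
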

	
	\begin{proof}
		By Lemma \ref{Lemma: Reduced Skolem Hull}, we can find a set $X\subseteq E_{\omega_1}^{\omega_2}$ unbounded such that for every $\eta\in X$, $i<\alpha$ and regressive function $f\in M_i$ on $\omega_2$, $f(\eta)<\sup(M_i\cap\omega_2)$. As in the proof of Lemma \ref{Lemma: Skolem Hull Presentation}, this implies that $\Hull^{\mathcal{A}}(M_i\cup\{\eta\})\cap\eta\subseteq\sup(M_i\cap\omega_2)$: Let $f\in M_i$ be any function on $\omega_2$. Let $f'$ be defined such that $f'(\eta)=f(\eta)$ provided $f'(\eta)<\eta$ and $f'(\eta)=0$ otherwise. Then $f'\in M_i$ and $f'$ is regressive. It follows that, whenever $f\in M_i$ is such that $f(\eta)<\eta$, $f(\eta)<\sup(M_i\cap\omega_2)$.
	\end{proof}
	
	Note that the other direction of Lemma \ref{Lemma: Alpha-Chang's Conjecture} holds as well, if we reduce the length of the game. This is due to the fact that, unlike in the classical game characterizing strong Chang's conjecture, we require that the functions are bounded by the exact ordinal played by player II and not by their supremum (see \cite[Chapter XII, Definition 2.1]{ShelahProperImproper}).
	
	\begin{mysen}\label{Theorem: Alpha-Chang's Conjecture}
		Suppose that $\alpha<\omega_1$ is an ordinal. Let $\Theta\geq\omega_3$ and let $\mathcal{A}$ be an algebra on $H(\Theta)$ contianing a well order $<_{\Theta}$ of $H(\Theta)$ such that whenever $(M_i)_{i<\alpha}$ is an $\alpha$-tower of countable elementary substructures of $\mathcal{A}$, there is $X\subseteq E_{\omega_1}^{\omega_2}$ unbounded such that for every $\eta\in X$ and $i<\alpha$,
		$$\Hull^{\mathcal{A}}(M_i\cup\{\eta\})\cap\eta\subseteq\sup(M_i\cap\omega_2)$$
		
		Then player II has a winning strategy in $\Game^{\alpha}(I_{\text{bdd}}^{\cof(\omega_1)})$.
	\end{mysen}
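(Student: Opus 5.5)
Player II will play along an $\alpha$-tower of countable elementary substructures of $\mathcal{A}$, built on the side during the run, and answer each $f_\beta$ with $\sup(M_\beta\cap\omega_2)$. Since $\mathcal{A}$ contains a well-order, the structure has Skolem functions, so hulls are legitimate elementary substructures.

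\emph{The strategy.} Suppose player I has played $(f_\gamma)_{\gamma\le\beta}$ and player II has already built $(M_\gamma)_{\gamma<\beta}$ with $f_\gamma\in M_\gamma$. If $\beta$ is a limit, let $M_\beta:=\bigcup_{\gamma<\beta}M_\gamma$, so the sequence stays continuous. If $\beta$ is $0$ or a successor, let
$$M_\beta:=\Hull^{\mathcal{A}}(\{(M_\gamma)_{\gamma<\beta},f_\beta,\alpha\}),$$
after expanding $\mathcal{A}$ if necessary so that $f_\beta$ and the earlier models lie in $H(\Theta)$; countably many elements are added, so $M_\beta$ is countable. At limit stages $f_\beta$ need not lie in $M_\beta=\bigcup_{\gamma<\beta}M_\gamma$. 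To fix this, I will reindex, just as in the proof of Proposition \ref{proposition: equivalence}: let the tower used for round $\beta$ be the successor-type model $M_{\beta+1}$. Equivalently, build the tower $(N_i)_{i<\alpha}$ with $N_{i+1}\ni f_i$, and have player II answer $f_i$ with $\xi_i:=\sup(N_{i+1}\cap\omega_2)$. Then $(N_i)_{i<\alpha}$ is an $\alpha$-tower of countable elementary substructures of $\mathcal{A}$, each $f_i\in N_{i+1}$, and $\xi_i$ depends only on the moves made so far, so this is a legitimate strategy.

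\emph{Why player II wins.} At the end of the run, apply the hypothesis to the tower $(N_i)_{i<\alpha}$, shifted by one or extended at the top if the indices require it. This gives an unbounded $X\subseteq E^{\omega_2}_{\omega_1}$ with $\Hull^{\mathcal{A}}(N_{i+1}\cup\{\eta\})\cap\eta\subseteq\sup(N_{i+1}\cap\omega_2)$ for every $\eta\in X$. Since $f_i\in N_{i+1}$ and $f_i$ is regressive, $f_i(\eta)$ lies in this hull and is below $\eta$. Hence $f_i(\eta)\le \sup(N_{i+1}\cap\omega_2)=\xi_i$. Thus $X$ is unbounded and consists of points of cofinality $\omega_1$, so $X\in (I^{\cof(\omega_1)}_{\text{bdd}})^+$.

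\emph{Main obstacle: strictness.} The argument above only gives $\le$, while the game requires $f_i(\eta)<\xi_i$. I will handle this in the way the preceding remark hints, using the fact that the exact ordinal player II plays matters. Since $N_{i+1}$ is countable and $\omega_2$ is regular, $\sup(N_{i+1}\cap\omega_2)\notin N_{i+1}$. Player II can simply play $\xi_i:=\sup(N_{i+1}\cap\omega_2)+1$; then $f_i(\eta)\le\sup(N_{i+1}\cap\omega_2)<\xi_i$, and strictness holds outright. The only other points to check are routine: the hypothesis must apply to towers of substructures of $\mathcal{A}$ that contain the expanded parameters, and the run must have length exactly $\alpha$, so that the reindexed tower still has length $\alpha$ (for successor $\alpha$, taking $N_0$ to be a hull of $\{\alpha\}$ handles the shift).
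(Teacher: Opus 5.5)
Your strategy is the paper's: build an $\alpha$-tower on the side, answer $f_i$ with the supremum of a tower model containing $f_i$, and use $f_i(\eta)\in\Hull^{\mathcal{A}}(M\cup\{\eta\})\cap\eta$ for $\eta\in X$. The ``strictness'' obstacle does not exist. $\Hull^{\mathcal{A}}(M_i\cup\{\eta\})\cap\eta\subseteq\sup(M_i\cap\omega_2)$ is inclusion in the ordinal $\sup(M_i\cap\omega_2)$, so it already gives $f_i(\eta)<\sup(M_i\cap\omega_2)$. The paper plays exactly $\xi_i=\sup(M_i\cap\omega_2)$; your $+1$ is harmless but unnecessary. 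Your shift $f_i\in N_{i+1}$, on the other hand, improves on the paper, which simply assumes player I plays the trivial function at limit rounds. For limit $\alpha$ every $i+1<\alpha$, so $(N_i)_{i<\alpha}$ is a legitimate $\alpha$-tower built round by round, and your proof is complete in that case. For finite $\alpha$ you should not shift: put $f_i\in N_i$.

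The successor case is where your argument fails. If $\alpha=\alpha'+1$, then $f_{\alpha'}\in N_\alpha$, and neither repair you mention is available. The sequence $(N_i)_{i\le\alpha}$ is an $(\alpha+1)$-tower, about which the hypothesis says nothing. The sequence $(N_{i+1})_{i<\alpha}$ is not continuous: at a limit $i$ its entry is $N_{i+1}$, not $\bigcup_{j<i}N_{j+1}=N_i$. How you choose $N_0$ is irrelevant.

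In fact, for $\alpha=\lambda+n$ with $\lambda$ a nonzero countable limit and $1\le n<\omega$, no strategy of this shape can succeed:
\begin{enumerate}
\item The tower model $M$ used to bound $f_i$ must contain $f_i$ and satisfy $\sup(M\cap\omega_2)\le\xi_i$, so it must be built in round $i$.
\item Hence distinct rounds need distinct models at increasing indices.
\item These indices must be $0$ or successors, since limit entries of a tower are forced unions.
\item But such indices below $\alpha$ have order type $\lambda+(n-1)<\alpha$.
\end{enumerate}
The paper's proof has the same hole, hidden in its assumption about limit rounds. What the argument actually establishes is the statement for limit or finite $\alpha$, which covers the later use with $\omega\alpha=\alpha$. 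For successor $\alpha$ it goes through only if you assume the hypothesis for $(\alpha+1)$-towers.
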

	
	\begin{proof}
		Let $(f_{\beta})_{\beta<\alpha}$ be a run of $\Game^{\alpha}(I_{\text{bdd}}^{\cof(\omega_1)})$. We will assume for simplicity that the trivial function is played for limit $\beta$. On the side, player II has constructed an $\alpha$-tower $(M_i)_{i<\alpha}$ of countable elementary substructures of $\mathcal{A}$ such that $f_i\in M_i$ for successor $i$ and played $\xi_i:=\sup(M_i\cap\omega_2)$. Let us show that this constitutes a winning strategy.
		
		By assumption, there is $X\subseteq E_{\omega_1}^{\omega_2}$ unbounded such that for every $\eta\in X$ and $i<\alpha$,
		$$\Hull^{\mathcal{A}}(M_i\cup\{\eta\})\cap\eta\subseteq\sup(M_i\cap\omega_2)$$
		In particular, for every $\eta\in X$, since $f_i(\eta)\in\Hull^{\mathcal{A}}(M_i\cup\{\eta\})$, $f_i(\eta)<\sup(M_i\cap\omega_2)=\xi_i$. Ergo, $X$ witnesses that II wins the run of $\Game^{\alpha}(I_{\text{bdd}}^{\cof(\omega_1)})$.
	\end{proof}
	
	In particular, whenever $\omega\alpha=\alpha$, the two conditions are equivalent.
	
	Lastly, we use Lemma \ref{Lemma: Reduced Skolem Hull} to show that, for a suitable collection $\mathcal{X}$, player II having a winning strategy in $\Game^{\omega+\omega}(I_{\text{bdd}}^{\cof(\omega_1)},\mathcal{X})$ implies that any two stationary subsets of $E_{\omega}^{\omega_2}$ reflect simultaneously. By work of Magidor (see \cite{MagidorReflectionStat}), this implies that $\omega_2$ is weakly compact in the constructible universe.
	
	\begin{mylem}
		Suppose that for every $\omega_2$-sized collection $\mathcal{X}$ of regressive functions on $\omega_2$, player II wins $\Game^{\omega+\omega}(I_{\text{bdd}}^{\cof(\omega_1)},\mathcal{X})$. Then for every pair $S_0,S_1$ of stationary subsets of $E^{\omega_2}_\omega$, there exists an ordinal $\gamma\in E_{\omega_1}^{\omega_2}$ such that $S_0\cap\gamma$ and $S_1\cap\gamma$ are stationary in $\gamma$.
	\end{mylem}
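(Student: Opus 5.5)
The plan is to argue by contradiction: fix stationary $S_0,S_1\subseteq E^{\omega_2}_\omega$ and, using the game with $\omega+\omega$ rounds, build an ordinal $\eta\in E^{\omega_2}_{\omega_1}$ at which both sets reflect. Assume no $\gamma\in E^{\omega_2}_{\omega_1}$ is a simultaneous reflection point. For each $\gamma\in E^{\omega_2}_{\omega_1}$, fix $k(\gamma)\in 2$ and a club $C_\gamma\subseteq\gamma$ of order type $\omega_1$ such that $C_\gamma\cap S_{k(\gamma)}=\emptyset$.

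First, shrink the problem. Choose $k\in 2$ so that $T_k:=\{\gamma:k(\gamma)=k\}$ is unbounded; it suffices to find some $\eta\in T_k$ that is a limit of $C$-points of $S_k$, which contradicts the choice of $C_\eta$. Let $c_\gamma\colon\omega_1\to\gamma$ enumerate $C_\gamma$ increasingly. The collection $\mathcal{X}$ will consist of the regressive functions
\[
g_\nu(\gamma)=c_\gamma(\nu)\quad(\nu<\omega_1),
\]
together with functions of the form $\gamma\mapsto\min(C_\gamma\setminus(\xi+1))$ for $\xi<\omega_2$, all set to $0$ off $T_k$. It also contains the indicator of $T_k$, encoded as a regressive function that vanishes on $T_k$ and is $1$ elsewhere, to force the witness set into $T_k$ modulo the ideal. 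This gives $|\mathcal{X}|=\omega_2$, so player II wins $\Game^{\omega+\omega}(I_{\text{bdd}}^{\cof(\omega_1)},\mathcal{X})$.

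Next, pick an $\omega+1$-tower, or a $2$-tower where each model is the union of an $\omega$-chain, of countable $M_i\prec H(\Theta)$ containing $\mathcal{X},S_0,S_1,\vec C$. Arrange that $\delta_i:=\sup(M_i\cap\omega_2)\in S_k$. This is possible because the set of $\sup(N\cap\omega_2)$ for countable $N\prec H(\Theta)$ contains a club in $E^{\omega_2}_\omega$ relative to such towers, and $S_k$ is stationary. Apply Lemma \ref{Lemma: Reduced Skolem Hull}, with the $\omega+\omega$ rounds accounting for the $\omega\alpha$ indexing when $\alpha=2$. This yields an unbounded $X\subseteq E^{\omega_2}_{\omega_1}$ such that for $\eta\in X$ and $f\in M_i\cap\mathcal{X}$ we have $f(\eta)<\delta_i$.

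Take $\eta\in X\cap T_k$ above all $\delta_i$. For each $\xi\in M_i\cap\omega_2$, the function $\gamma\mapsto\min(C_\gamma\setminus(\xi+1))$ lies in $M_i$, so the next point of $C_\eta$ after $\xi$ is below $\delta_i$. Since $\xi$ is cofinal in $\delta_i$, $C_\eta\cap\delta_i$ is unbounded in $\delta_i$, and closure of $C_\eta$ puts $\delta_i\in C_\eta\cap S_k$. This contradicts $C_\eta\cap S_k=\emptyset$.

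This argument uses only one model. The second block of $\omega$ rounds and the two sets are needed because the contradiction hypothesis is only about simultaneous reflection. The choice of $k$ depends on $\eta$, so we cannot fix $k$ in advance using unboundedness of $T_k$: $X$ may land in $T_{1-k}$. The fix is to use two consecutive blocks of rounds, giving $M_0\in M_1$ with $\delta_0\in S_0$ and $\delta_1\in S_1$. The argument above then gives $\delta_0,\delta_1\in C_\eta$ for every $\eta\in X$, contradicting $C_\eta\cap S_{k(\eta)}=\emptyset$ whichever value $k(\eta)$ takes.

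The main obstacle is producing a tower $(M_0,M_1)$ with $\delta_0\in S_0$ and $\delta_1\in S_1$. The approach is to build $M_1$ first with $\delta_1\in S_1$ using stationarity of $S_1$ and a continuous chain. Then, inside $M_1$, choose $M_0\in M_1$ with $\sup(M_0\cap\omega_2)\in S_0$, using $S_0\in M_1$ and elementarity of $M_1$.
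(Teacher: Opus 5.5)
Your final two-model argument is the paper's proof: a $2$-tower $M_0\in M_1$ with $\sup(M_i\cap\omega_2)\in S_i$, then Lemma \ref{Lemma: Reduced Skolem Hull} with $\alpha=2$, then the functions $\gamma\mapsto\min(C_\gamma\setminus(\xi+1))$ for $\xi\in M_i$, which give $\delta_0,\delta_1\in C_\eta$ for any $\eta\in X$ above both. Two small corrections: those functions must be defined on all of $E^{\omega_2}_{\omega_1}$ rather than zeroed off $T_k$ (drop the $T_k$, $g_\nu$ and indicator devices entirely), and the ``main obstacle'' disappears if, as in the paper, you pick $M_0$ first with $\sup(M_0\cap\omega_2)\in S_0$ and then $M_1\ni M_0$ with $\sup(M_1\cap\omega_2)\in S_1$, which also avoids having $M_1\prec H(\Theta)$ quantify over elementary submodels of $H(\Theta)$ itself.
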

	
	\begin{proof}
		Suppose otherwise. It follows that, for every ordinal $\gamma\in E_{\omega_1}^{\omega_2}$, we can fix a set $c_{\gamma}\subseteq\gamma$ which is club and disjoint from either $S_0$ or $S_1$.
		
		Let $\Theta$ be a sufficiently large regular cardinal and let $\mathcal{X}$ be the collection of all regressive functions on $\omega_2$ which are in $\Hull^{(H(\Theta),\in)}(\omega_2\cup\{S_0,S_1,(c_{\gamma})_{\gamma\in E_{\omega_1}^{\omega_2}}\})$. Clearly, $|\mathcal{X}|=\omega_2$.
		
		Let $M_0\prec H(\Theta)$ be such that $(c_{\gamma})_{\gamma\in E_{\omega_1}^{\omega_2}}\in M_0$ and $M_0\cap\omega_2\in S_0$. Let $M_1\prec H(\Theta)$ be such that $(c_{\gamma})_{\gamma\in E_{\omega_1}^{\omega_2}},M_0\in M_1$ and $M_1\cap\omega_2\in S_1$. By Lemma \ref{Lemma: Reduced Skolem Hull}, there is an unbounded set of ordinals $\eta\in E_{\omega_1}^{\omega_2}$ such that for $i=0,1$ and any $f\in M_i\cap\mathcal{X}$, $f(\eta)<\sup(M_i\cap\omega_2)$. For $i=0,1$, let $\delta_i:=\sup(M_i\cap\omega_2)$. Choose $\eta$ as above with $\eta>\delta_0,\delta_1$.
		
		\begin{myclaim}
			For $i=0,1$, $c_{\eta}\cap\delta_i$ is unbounded in $\delta_i$.
		\end{myclaim}
		
		\begin{proof}
			We prove the claim simultaneously for both $i$. So let $\beta<\delta_i$ and assume that $\beta\in M_i\cap\omega_2$. Let $f$ be the function mapping an ordinal $\xi$ to the smallest element of $c_{\xi}$ above $\beta$ if $\cf(\xi)=\omega_1$ and such an element exists or $0$ otherwise. It follows that $f\in\mathcal{X}\cap M_i$ and so $f(\eta)<\sup(M_i\cap\omega_2)$. In particular, there is some $\beta'<\sup(M_i\cap\omega_2)$ with $\beta'\in c_{\eta}-(\beta+1)$.
		\end{proof}
		
		Since $c_{\eta}$ is club for every $\eta\in E_{\omega_1}^{\omega_2}$, $\delta_0,\delta_1\in c_{\eta}$. However, by assumption, $\delta_0\in S_0$ and $\delta_1\in S_1$, so $c_{\eta}$ is disjoint from neither $S_0$ nor $S_1$, a contradiction.
	\end{proof}
	
	\begin{mycol}
		Suppose that for every $\omega_2$-sized collection $\mathcal{X}$ of regressive functions on $\omega_2$, player II wins $\Game^{\omega+\omega}(I_{\text{bdd}}^{\cof(\omega_1)},\mathcal{X})$. Then $\omega_2$ is weakly compact in $L$.
	\end{mycol}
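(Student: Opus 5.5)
The plan is to combine the preceding lemma with Magidor's theorem on simultaneous reflection, and to check that the relevant hypotheses hold in the model at hand.

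First, I verify that $\omega_2$ is regular and uncountable, so that Magidor's result applies with $\kappa=\omega_2$. Magidor's theorem (\cite{MagidorReflectionStat}) says the following. Suppose $\kappa$ is regular and every pair of stationary subsets of $E^{\kappa}_{\omega}$ reflects simultaneously at some ordinal of cofinality $\omega_1$. Then $\kappa$ is weakly compact in $L$.

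Second, I apply the previous lemma directly. Under our hypothesis, player II wins $\Game^{\omega+\omega}(I_{\text{bdd}}^{\cof(\omega_1)},\mathcal{X})$ for every $\omega_2$-sized $\mathcal{X}$. The lemma then gives, for every pair $S_0,S_1\subseteq E^{\omega_2}_\omega$ of stationary sets, some $\gamma\in E^{\omega_2}_{\omega_1}$ with both $S_0\cap\gamma$ and $S_1\cap\gamma$ stationary in $\gamma$. This is exactly the simultaneous reflection hypothesis in Magidor's theorem, so $\omega_2$ is weakly compact in $L$.

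The main point to check is that the form of reflection we obtained matches what Magidor's argument needs. His proof runs by contraposition. If $\omega_2$ is not weakly compact in $L$, then, since $\omega_2^V$ is regular in $L$ and in fact inaccessible in $L$ (a separate small point to verify, using reflection of stationary sets to rule out $\omega_2=(\mu^+)^L$ via a square-like sequence), there is a non-reflecting, $L$-definable witness. From a $\square(\omega_2)$-type sequence in $L$, or a failure of the $\Pi^1_1$-indescribability characterization in $L$, one builds two stationary subsets of $E^{\omega_2}_\omega$ that do not reflect simultaneously at any point of cofinality $\omega_1$.

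If citing the theorem as a black box is not acceptable, I would reproduce the argument along these lines. In $L$, non-weak-compactness of an inaccessible gives a $\square(\kappa)$-like coherent sequence $\langle C_\gamma\rangle$ without a thread. Jensen's fine structure refines this to a sequence from which one splits $E^{\omega_2}_\omega$ by order type or by a regressive invariant into two stationary pieces. The coherence then prevents any $\gamma$ of cofinality $\omega_1$ from reflecting both pieces. This is the step I expect to be the real obstacle, but it is entirely Magidor's, and the corollary itself only requires verifying that the preceding lemma delivers his hypothesis.
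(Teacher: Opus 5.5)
Your proposal is correct and follows the paper's own route. The preceding lemma gives simultaneous reflection of any two stationary subsets of $E^{\omega_2}_\omega$ at a point of cofinality $\omega_1$, and Magidor's theorem, cited as a black box, then gives that $\omega_2$ is weakly compact in $L$. Your optional sketch of Magidor's argument is not needed; if you pursue it, the step that needs Jensen's fine structure is showing the coherent sequence obtained in $L$ has no thread in $V$ (not merely in $L$), since the splitting argument needs non-threadedness in $V$.
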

	
	\section{Adding Clubs into Stationary Sets}
	\label{section: adding clubs}
	The following poset was defined by Foreman-Magidor-Shelah in \cite[Theorem 9]{ForemanMagidorShelahMM} in order to show that Martin's Maximum implies that for every regular cardinal $\kappa\geq\omega_2$, every stationary subset of $E_{\omega}^{\kappa}$ contains a closed set of ordertype $\omega_1$.
	
	\begin{mydef}
		Let $S\subseteq E_{\omega}^{\omega_2}$ be stationary. The poset $\dP(S)$ consists of continuous and increasing functions $p\colon\alpha+1\to S$, where $\alpha<\omega_1$ is a countable ordinal, ordered by extension.
	\end{mydef}
	
	\begin{mysen}[Foreman-Magidor-Shelah]
		Let $S\subseteq E_{\omega}^{\omega_2}$ be stationary. The poset $\dP(S)$ preserves stationary subsets of $\omega_1$, is ${<}\,\omega_1$-distributive and adds a continuous and increasing function $f\colon\omega_1\to S$.
	\end{mysen}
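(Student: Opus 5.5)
The plan is to check three things in turn: that $\dP(S)$ is ${<}\,\omega_1$-distributive (equivalently, adds no new countable sequences of ordinals), that it preserves stationary subsets of $\omega_1$, and that the generic function $\bigcup G$ is a continuous increasing map from $\omega_1$ into $S$. The central tool is a single genericity construction over a countable model. Let $\Theta$ be large, let $N\prec H(\Theta)$ be countable with $S\in N$, and let $\delta_N=\sup(N\cap\omega_2)$, so that $\operatorname{cf}(\delta_N)=\omega$. Since $S$ is stationary in $\omega_2$, the set of $\delta_N$ arising this way lands in $S$ for stationarily many $N$ (indeed for any club of structures). The basic step is: given $p\in N$, a countable list of dense sets $D_n\in N$, and a countable $N$ with $\delta_N\in S$, build a descending sequence $p=p_0\ge p_1\ge\cdots$ with $p_{n+1}\in D_n\cap N$ and $\max(p_n)$ cofinal in $\delta_N$. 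Then put $q=\bigcup_n p_n\cup\{(\alpha,\delta_N)\}$, where $\alpha=\sup_n\dom(p_n)$ equals $N\cap\omega_1$. Continuity at $\alpha$ holds because $\delta_N=\sup_n\max(p_n)$, and $\delta_N\in S$, so $q$ is a condition. Cofinality in $\delta_N$ is arranged by interleaving the dense sets of conditions whose range exceeds a given $\beta\in N\cap\omega_2$; these sets are dense because $S$ is unbounded. We also use that $\dom(p_n)$ climbs to $N\cap\omega_1$, which is handled by density in the same way.

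For distributivity, fix $p$ and a name $\dot g$ for a function $\omega\to\mathrm{Ord}$. Choose $N$ containing $p,\dot g$ with $\delta_N\in S$; this is possible since $\{N\cap\omega_2:N\prec H(\Theta)\}$ contains a club of $E^{\omega_2}_\omega$ restricted appropriately, which meets $S$. Taking $D_n$ to be the set of conditions deciding $\dot g(n)$, the resulting $q$ decides all of $\dot g$. The same argument applied to $\dot g\colon\beta\to\mathrm{Ord}$ for any countable $\beta$ gives ${<}\,\omega_1$-distributivity. Consequently $\omega_1$ is preserved, and the union of the generic is a function with domain $\omega_1$: by density, any $\alpha<\omega_1$ can be added, using the unboundedness of $S$ above any given ordinal. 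It is increasing and continuous into $S$ because each condition is.

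For preservation of a stationary $T\subseteq\omega_1$, suppose $p\Vdash\dot C$ is a club. I would pick $N$ with $p,\dot C,S,T\in N$, $N\cap\omega_1\in T$, and $\sup(N\cap\omega_2)\in S$. Then I run the construction, enumerating all dense sets in $N$, and obtain an $(N,\dP(S))$-generic $q$. This forces $N[\dot G]\cap\omega_1=N\cap\omega_1$, and hence $N\cap\omega_1\in\dot C\cap T$.

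The main obstacle is finding such an $N$ that simultaneously meets $T$ via $N\cap\omega_1$ and meets $S$ via $\sup(N\cap\omega_2)$. My plan for this is a two-step model construction. First build a continuous $\in$-chain $\langle N_\xi:\xi<\omega_2\rangle$ of models of size $\omega_1$ with $\omega_1\subseteq N_\xi$. The set of $\xi$ with $N_\xi\cap\omega_2=\xi$ is club, so some $\xi\in S$ is of this form, with $\operatorname{cf}(\xi)=\omega$. Fix a cofinal $\omega$-sequence $\langle\xi_n\rangle$ in $\xi$. Then, inside $N_\xi$, take a continuous $\omega_1$-chain of countable models $K_\nu$ containing the parameters and each $\xi_n$. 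The set of $\nu$ with $K_\nu\cap\omega_1=\nu$ is club in $\omega_1$, so we can choose such a $\nu\in T$. Setting $N=K_\nu$, we have $\sup(N\cap\omega_2)=\xi\in S$, since $N\subseteq N_\xi$ and $N$ contains the cofinal $\xi_n$, while $N\cap\omega_1\in T$.
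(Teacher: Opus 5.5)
Your arguments for ${<}\,\omega_1$-distributivity and for preservation of stationary subsets of $\omega_1$ are correct. The paper gives no proof of this Foreman--Magidor--Shelah theorem. Your stationarity argument is essentially the paper's proof of its generalization, Theorem \ref{Theorem: Alpha-Stationary Preservation}, specialized to $\alpha=1$: take an $\omega_1$-sized model whose trace on $\omega_2$ lies in $S$, take a countable hull inside it of a cofinal $\omega$-sequence with trace on $\omega_1$ in $T$, and build a generic condition closed off at $\sup(N\cap\omega_2)$. Your $K_\nu$ should be Skolem hulls of $\nu\cup\{\xi_n:n<\omega\}$ plus parameters; these are subsets of $N_\xi$, not elements of it. Also, your $q$ is a condition whether or not $\sup_n\dom(p_n)=N\cap\omega_1$. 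The range-climbing sets keep the domains from stabilizing, so that supremum is a limit ordinal, and that is all you use.

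The step that fails as written is the claim that $\bigcup G$ has domain $\omega_1$, and with it your assertion $\sup_n\dom(p_n)=N\cap\omega_1$. You derive the density of $E_\gamma=\{r\in\dP(S):\dom(r)>\gamma\}$ from the unboundedness of $S$, but unboundedness only gives one-point extensions. To carry a condition past a limit $\gamma$, the supremum of the earlier values must land in $S$ at every limit point. So you need closed subsets of $S$ of arbitrary countable order type above any given ordinal, which is exactly where stationarity of $S$ enters. Here is a counterexample to the unboundedness argument. Take the unbounded nonstationary set $S'=\{\omega_1\cdot\beta+\omega:\beta<\omega_2\}\subseteq E^{\omega_2}_{\omega}$. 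Every increasing $\omega$-sequence in $S'$ has supremum of the form $\omega_1\cdot\mu\notin S'$, so every condition of $\dP(S')$ has finite domain.

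The repair uses only your own construction. Prove by induction on $\gamma<\omega_1$ that $E_\gamma$ is dense. Successor steps are one-point extensions. For limit $\gamma$ and a given $r$, take a countable $N\ni r,\gamma,S$ with $\sup(N\cap\omega_2)\in S$. Build a descending sequence in $N$ that meets $E_{\gamma_n}$ for some increasing sequence $\gamma_n$ cofinal in $\gamma$ (these sets are dense by induction and lie in $N$), and also meets the range-climbing sets. Close off at $\sup(N\cap\omega_2)$; the resulting condition lies in $E_\gamma$.

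Alternatively, argue from distributivity. If $\dom(\bigcup G)=\lambda<\omega_1$, then $g=\bigcup G$ is a countable sequence of ordinals, hence in $V$. Then $G=\{r\in\dP(S): r\subseteq g\}\in V$, which is impossible because $\dP(S)$ is atomless.
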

	
	As is often the case for non-proper forcings, the poset $\dP(S)$ is not necessarily semiproper, although it always preserves stationary subsets of $\omega_1$. In the following, we show that $\dP(S)$ satisfies the corresponding notion of being ``$\alpha$-stationary set preserving'' and thus, by Lemma \ref{Lemma: Dagger for alpha-stationarity}, is $(\alpha,\omega_1)$-semiproper in many models.

	\begin{mysen}\label{Theorem: Alpha-Stationary Preservation}
		Let $S\subseteq E_{\omega}^{\omega_2}$ be stationary. Then for every $\alpha\in\omega_1$, $\dP(S)$ preserves stationary subsets of $P_{\omega_1}^{\alpha}(\omega_1)$.
	\end{mysen}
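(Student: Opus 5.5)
We adapt the Foreman--Magidor--Shelah argument that $\dP(S)$ preserves stationary subsets of $\omega_1$, replacing single models by $\alpha$-towers. Fix a stationary $T\subseteq P_{\omega_1}^{\alpha}(\omega_1)$, a condition $p\in\dP(S)$ and a $\dP(S)$-name $\dot F$ for an $\alpha$-function on $\omega_1$. We look for $q\leq p$ and $(a_i)_{i<\alpha}\in T$ with $q\Vdash (a_i)_{i<\alpha}$ is closed under $\dot F$. By Proposition \ref{Proposition: Equivalence of Stationarity}, it suffices to find an $\alpha$-tower $(M_i)_{i<\alpha}$ of countable elementary substructures of $H(\Theta)$, with $p,\dot F,S\in M_0$ and $(M_i\cap\omega_1)_{i<\alpha}\in T$, together with a condition $q\leq p$ that is $(M_i,\dP(S))$-$\omega_1$-semigeneric for every $i<\alpha$. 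Indeed, such a $q$ forces $M_i[\dot G]\cap\omega_1=M_i\cap\omega_1$, and $(M_i[\dot G])_{i<\alpha}$ is still a tower containing $\dot F^G$, so $(M_i\cap\omega_1)_{i<\alpha}$ is forced to be closed under $\dot F$.

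The key choice is which towers to use. We will work with towers whose top ordinals $\delta_i:=\sup(M_i\cap\omega_2)$ all lie in $S$, and we want the continuous sequence $(\delta_i)_{i<\alpha}$ to be usable as part of the generic club.

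\begin{itemize}
\item[Step 1.] Show that there is a tower $(M_i)_{i<\alpha}$ with $(M_i\cap\omega_1)_{i<\alpha}\in T$ and $\sup(M_i\cap\omega_2)\in S$ for all $i<\alpha$. This uses a two-cardinal argument. First, for each $\gamma\in S$ (so $\cf(\gamma)=\omega$), the towers $(M_i)$ inside a fixed model $N$ of size $\aleph_1$ with $N\cap\omega_2=\gamma$ still project stationarily to $P_{\omega_1}^\alpha(\omega_1)$. Second, we use stationarity of $S$ through a continuous $\in$-chain $(N_\xi)_{\xi<\omega_2}$ of models of size $\aleph_1$ with $N_\xi\cap\omega_2\in\omega_2$, and pick $\xi$ with $N_\xi\cap\omega_2\in S$. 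The requirement that \emph{every} $\delta_i$ lies in $S$, rather than just one, is handled by induction on $i$. At a successor stage we build the whole remaining tower inside a model whose height is in $S$, using countable cofinality to reach exactly that height. At a limit stage, continuity forces $\delta_i=\sup_{j<i}\delta_j$, so the heights must be arranged to converge to a point of $S$. To arrange this, we choose the heights in advance along a closed copy of $\alpha+1$ inside $S$, which exists because $S$ is stationary and contains closed countable sets of every countable order type (by the standard argument for stationary subsets of $E^{\omega_2}_\omega$). We then realise the tower with those heights while meeting $T$. This interaction between meeting $T$ and prescribing the heights is where we expect the main obstacle to lie.

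\item[Step 2.] Given such a tower, build $q$ by a fusion of length $\omega\cdot\alpha$, following the proof of Proposition \ref{proposition: equivalence}. Inside each block $M_{i+1}\setminus M_i$ we enumerate the names for countable ordinals in $M_{i+1}$ and extend conditions within $M_{i+1}$ to decide each of them. This is possible since $\dP(S)$ is $<\omega_1$-distributive and the sets deciding a given name are dense. We also push the maximum of the condition cofinally toward $\delta_{i+1}$. At limit stages $i$, the union of the conditions has supremum $\delta_i\in S$, so we may append $\delta_i$ and obtain a condition. At the end of the fusion we append $\delta_\alpha$ as well. The resulting $q$ decides every $M_i$-name for a countable ordinal as an element of $M_i\cap\omega_1$, because each decision was made by a condition inside $M_i$. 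Hence $q$ is semigeneric for every $M_i$.
\end{itemize}

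The only place in Step 2 where membership in $S$ matters is at the limits, and Step 1 guarantees it there. Together, the two steps produce the required tower and condition $q$.
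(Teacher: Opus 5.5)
Your reduction is fine: you ask for an $\alpha$-tower $(M_i)_{i<\alpha}$ with $(M_i\cap\omega_1)_{i<\alpha}\in T$ together with a condition that is semigeneric for every $M_i$. Your Step~2 is also fine, and both agree with the paper. Once $\sup(M_i\cap\omega_2)\in S$ for \emph{every} $i<\alpha$, a fusion that caps each block with $\delta_i$ gives the required condition. You need the cap at the end of successor blocks too, not only at limits, since a union of $\omega$ conditions has no top point.

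The gap is Step~1. It carries the whole content of the theorem, and you explicitly leave it open. Your sketch also does not work as stated, for two reasons.
\begin{itemize}
\item Heights cannot be prescribed along an arbitrary closed copy of $\alpha+1$ in $S$. If $M\prec H(\Theta)$ is countable and $D\in M$ is club in $\omega_2$, then $\sup(M\cap\omega_2)\in D$. Because $(M_j)_{j<i}\in M_i$, the clubs constraining $M_i$ depend on the models already chosen, so the heights have to be produced together with the models.
\item Stationarity of the projections of towers inside a fixed $\omega_1$-sized $N$ with $N\cap\omega_2=\gamma$ does not control heights: a countable submodel of $N$ can have height far below $\gamma$. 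Adding a cofinal $\omega$-sequence afterwards can enlarge the trace on $\omega_1$ and throw you out of $T$.
\end{itemize}

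The paper separates the two requirements.

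First, Proposition~\ref{Proposition: Alpha-Projective Stationarity} (the tower analogue of Friedman's theorem that is actually needed) builds, by induction on $\alpha$, a continuous sequence $(M_i)_{i<\alpha}$ of $\omega_1$-sized models. These satisfy $(M_j)_{j<i}\in M_i$ for successor $i$ and $M_i\cap\omega_2\in S$ for all $i$. The hard case is $\alpha=\beta+1$ with $\beta$ limit. There the paper constructs $\omega_2$ many candidate $\beta$-sequences $\overline{M}_\xi$, each containing all earlier ones, and takes $M'_\beta$ containing the whole list with $M'_\beta\cap\omega_2\in S$. It then stitches together segments of $\overline{M}_{\xi_n}$ for $(\xi_n)_n$ cofinal in $M'_\beta\cap\omega_2$.

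Second, it meets $T$ with a single $\alpha$-function. Fix a countable cofinal $x_\beta\subseteq M_\beta\cap\omega_2$ and let
\[
G((a_j)_{j<\beta},x)=\Hull^{M_\beta}(\{(M_j)_{j<\beta},(a_j)_{j<\beta}\}\cup x\cup x_\beta)\cap\omega_1 .
\]
Stationarity of $T$ yields $(a_j)_{j<\alpha}\in T$ closed under $G$. Then $N_\beta:=\Hull^{M_\beta}(\{(M_j)_{j<\beta},(a_j)_{j<\beta}\}\cup a_\beta\cup x_\beta)$, with unions at limits, is an $\alpha$-tower. It satisfies $N_\beta\cap\omega_1=a_\beta$ and $\sup(N_\beta\cap\omega_2)=M_\beta\cap\omega_2\in S$, at limits by continuity of $(M_i)$.

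Building $x_\beta$ into the closure requirement is what lets you pin the height exactly without disturbing the trace on $\omega_1$. Since $\omega_1\subseteq M_\beta$, the scaffold places no restriction on which traces occur. Without these two ingredients your proposal does not yet prove the theorem.
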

	
	The first step in Theorem \ref{Theorem: Alpha-Stationary Preservation} is an analogue of Friedman's classical result that, for every regular cardinal $\kappa\geq\omega_1$, every stationary subset of $E_{\omega}^{\kappa}$ contains arbitrarily long countable closed subsets (\cite{FriedmanClosed}).
	
	\begin{mypro}\label{Proposition: Alpha-Projective Stationarity}
		Let $S\subseteq E_{\omega}^{\omega_2}$ be stationary and let $\alpha$ be a countable ordinal. Then for every sufficiently large $\Theta$ and every $x\in H(\Theta)$, there is a continuous sequence $(M_i)_{i<\alpha}$ of $\omega_1$-sized elementary submodels of $H(\Theta)$ such that $x\in M_0$ and for each ordinal $i<\alpha$, $M_i\cap\omega_2\in S$ and if $i$ is a successor, $(M_j)_{j<i}\in M_i$.
	\end{mypro}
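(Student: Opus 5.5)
We prove the statement by induction on $\alpha<\omega_1$, and in fact prove the stronger, uniform version: for every $\alpha<\omega_1$, every sufficiently large $\Theta$ and every $x\in H(\Theta)$, the set of ordinals $\delta<\omega_2$ for which there is a sequence $(M_i)_{i<\alpha}$ as required with $x\in M_0$ and $\sup(M_i\cap\omega_2)<\delta$ for all $i$ is unbounded; even better, we will aim to show that the set of possible "top values" $\sup_{i<\alpha}(M_i\cap\omega_2)$ is stationary, or at least that we can choose the last model's trace in $S$ while starting above any given ordinal. Throughout we work with $\omega_1$-sized models $M$ with $\omega_1\subseteq M$, so $M\cap\omega_2$ is an ordinal; the requirement $M_i\cap\omega_2\in S$ thus says $M_i\cap\omega_2$ is a point of $S$, necessarily of countable cofinality.

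The basic tool is the standard fact that for any $y\in H(\Theta)$ the set $C_y=\{M\cap\omega_2: M\prec H(\Theta),\ |M|=\omega_1,\ \omega_1\subseteq M,\ y\in M,\ M\cap\omega_2\in\omega_2\}$ contains a club of $\omega_2$ (take a continuous $\in$-chain of length $\omega_2$ of such models containing $y$). Hence $S\cap C_y$ is stationary. The induction hypothesis we carry is: for every $x$, the set $T_\alpha(x)$ of ordinals $\delta$ such that there is a continuous tower $(M_i)_{i\le\alpha}$ ($\alpha+1$ many models, so the top is a model too) with $x\in M_0$, all $M_i\cap\omega_2\in S$, successor-coherence $(M_j)_{j<i}\in M_i$, and $M_\alpha\cap\omega_2=\delta$, is stationary (relative to $S$). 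Successor step: given the tower of length $\alpha+1$ ending with $M_\alpha$, we need $M_{\alpha+1}$ containing $(M_j)_{j\le\alpha}$ with trace in $S$; apply the club fact with $y=(x,(M_j)_{j\le\alpha})$, and to get stationarily many top values, do this inside a continuous $\omega_2$-chain argument: fix a club $D$ and a model $N$ of size $\omega_1$ with $x,D,S$ in it and $N\cap\omega_2\in S\cap \acc(D)$; work inside $N$ by elementarity to produce towers in $N$ and then take the top model as a union/hull.

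The limit step is the main obstacle, exactly as in Friedman's theorem: for limit $\alpha$ we must choose the models $M_i$, $i<\alpha$, so that the union $M_\alpha=\bigcup_{i<\alpha}M_i$ has $M_\alpha\cap\omega_2=\sup_i(M_i\cap\omega_2)$ lying in $S$. Since $\cf(\alpha)=\omega$, fix $\alpha_n\nearrow\alpha$. The plan: let $\delta\in S$ be a limit point, of countable cofinality, of a suitable club, and choose a cofinal sequence $\delta_n\nearrow\delta$. Build a continuous $\in$-chain $\langle N_\xi:\xi<\omega_2\rangle$ of $\omega_1$-sized models containing $x,S$, and by stationarity pick $\delta\in S$ with $\delta=N_\delta\cap\omega_2$ and $\delta$ a limit of $\xi$ with $N_\xi\cap\omega_2=\xi$. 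Then recursively, for each $n$, use the induction hypothesis (the stationary version) inside $N_{\delta_{n+1}}$, with parameter containing the tower built so far and $N_{\delta_n}$, to extend the tower on the interval $[\alpha_n,\alpha_{n+1}]$ with all new models having traces in $S$ between $\delta_n$ and $\delta_{n+1}$ (the required stationarity below $\delta_{n+1}$ is obtained by elementarity: $N_{\delta_{n+1}}$ sees that $T_{\beta}(y)$ is stationary, hence unbounded, so it contains values above $\delta_n$ inside $N_{\delta_{n+1}}$). The union then has trace $\delta\in S$, continuity holds by construction, and successor coherence is preserved since each new block is built with the previous tower as a parameter. The delicate point to verify is that the "stationary top values" form of the hypothesis is genuinely needed and is reproduced at limits; this follows since $\delta$ was chosen from an arbitrary stationary intersection $S\cap D$, giving stationarily many top values, which closes the induction. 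The statement for $(M_i)_{i<\alpha}$ follows by dropping the top model.
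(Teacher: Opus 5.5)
Your argument is correct, but your limit step is organized differently from the paper's.

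\textbf{The paper's route.} The paper does not fix the target ordinal in advance. In the case $\alpha=\beta+1$ with $\beta$ limit, it uses the plain inductive hypothesis in $V$ to choose an $\omega_2$-sequence $(\overline{M}_\xi)_{\xi<\omega_2}$ of towers of length $\beta$, such that the successor models of $\overline{M}_\xi$ contain $(\overline{M}_\eta)_{\eta<\xi}$. It then takes one $\omega_1$-sized model $M'_\beta$ containing this sequence with $M'_\beta\cap\omega_2\in S$. Finally it stitches the tower together from blocks of $\overline{M}_{\xi_n}$, for $\xi_n$ cofinal in $M'_\beta\cap\omega_2$. This traps the trace of the union at $\sup(M'_\beta\cap\omega_2)$. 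Limit $\alpha$ (no top model) is handled by plain stitching, with no appeal to $S$ at the top.

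\textbf{Your route.} You instead fix $\delta\in S$ as a limit of closure points $\delta_n$ of an $\omega_2$-chain $(N_\xi)_{\xi<\omega_2}$. You then build the blocks recursively by reflecting the inductive hypothesis into $N_{\delta_{n+1}}$.

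\textbf{Comparison.} The paper never has to reflect statements about towers of elementary submodels of $H(\Theta)$ into another model. In your version the $N_\xi$ must be elementary in some larger $H(\Theta')$ with $H(\Theta)\in N_0$, and you should say so. In return, your version yields the extra information that the possible top traces form a stationary set.

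That strengthening is, however, superfluous for your own argument. Since $N_{\delta_n}$ is in the parameter, $\delta_n$ already belongs to the first model of block $n$. So the plain inductive hypothesis already puts every trace of that block in $(\delta_n,\delta_{n+1})$. Likewise, the detour through $N$ in your successor step is unnecessary, because $S\cap C_y$ is already stationary.

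\textbf{Indexing.} Make the blocks half-open intervals starting at successor ordinals, e.g.\ choose the $\alpha_n$ to be successors, as the paper does. Then the blocks do not overlap, and the first model of each block sits at a successor index, where it is required to contain the tower built so far. A block starting at a limit index would clash with continuity.
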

	
	\begin{proof}
		We prove the statement by induction on $\alpha\in \omega_1$. Suppose it holds for all $\beta<\alpha$.
		\begin{itemize}
			\item Case 1: $\alpha$ is a limit. Let $(\alpha_n)_{n\in\omega}$ be a sequence of successor ordinals converging to $\alpha$, where $\alpha_0=0$. For each $n\in\omega$, find a sequence $(N_i^n)_{i<\alpha_n}$ witnessing the inductive hypothesis such that for every $n\in\omega$ and $k<n$, $(N_i^k)_{i<\alpha_k}\in M_0^n$. Let $(M_i)_{i<\alpha}$ be defined such that, for $i\in[\alpha_n,\alpha_{n+1})$, $M_i=N_i^{n+1}$. We show that $(M_i)_{i<\alpha}$ is as required: The only potentially problematic part is showing that $(M_j)_{j<i}\in M_i$. So let $i<\alpha$ be a successor ordinal, $i\in[\alpha_n,\alpha_{n+1})$. Then $M_i$ contains $(N_j^{n+1})_{j<i}$, and $(N_j^k)_{j<\alpha_k}$ for every $k<n$. But from these sequences we can easily construct $(M_j)_{j<i}$.
			\item Case 2: $\alpha$ is the successor of a successor ordinal. Let $\alpha=\beta+1$, where $\beta$ is a successor ordinal. Find a sequence $(M_i)_{i<\beta}$ witnessing the inductive hypothesis. Let $M_{\beta}$ be any $\omega_1$-sized elementary submodel of $H(\Theta)$ such that $M_{\beta}\cap\omega_2\in S$ and $(M_i)_{i<\beta}\in M_{\beta}$. Clearly, $(M_i)_{i<\alpha}$ is as required.
			\item Case 3: $\alpha$ is the successor of a limit ordinal. Let $\alpha=\beta+1$ for a limit ordinal $\beta$. Let $(\beta_n)_{n\in\omega}$ be an increasing sequence of successor ordinals converging to $\beta$ with $\beta_0=0$. Using the inductive hypothesis, define a sequence $(\overline{M}_{\xi})_{\xi<\omega_2}$ such that for each $\xi<\omega_2$:
			\begin{enumerate}
				\item $\overline{M}_{\xi}=(M_i^{\xi})_{i<\beta}$;
				\item for every $i<\beta$, $|M_i^{\xi}|=\omega_1$ and $M_i^{\xi}\cap\omega_2\in S$;
				\item for every successor $i<\beta$, $(M_j^{\xi})_{j<i},(\overline{M}_{\eta})_{\eta<\xi}\in M_i^{\xi}$
			\end{enumerate}
			
			Since $S$ is stationary, we can find $M_{\beta}'\prec H(\Theta)$ with size $\omega_1$ such that $(\overline{M}_{\xi})_{\xi<\omega_2}\in M_{\beta}'$ and $M_{\beta}'\cap\omega_2\in S$. Fix an increasing sequence $(\xi_n)_{n\in\omega}$ of successor ordinals cofinal in $M_{\beta}'\cap\omega_2$. Finally, define $(M_i)_{i\leq\beta}$ as follows: For every $i<\alpha$, if $i\in[\beta_n,\beta_{n+1})$, let $M_i:=M_i^{\xi_n}$. Since $(\xi_n)_{n\in\omega}$ is cofinal in $M_{\beta}'\cap\omega_2$, we can let $M_{\beta}:=\bigcup_{i\in\beta}M_i$ and find that $\sup(M_{\beta}\cap\omega_2)=\sup(M_{\beta}'\cap\omega_2)\in S$. Also, the sequence is continuous below $\alpha$ since we stitched it together at successor ordinals. Lastly, let $i<\alpha$ be a successor ordinal. Since $\beta$ is a limit, $i<\beta$. Let $n\in\omega$ be such that $i\in[\beta_n,\beta_{n+1})$. Then, by construction, $(M_j^{\xi})_{j<i}$, $(\overline{M}_{\eta})_{\eta<\xi}\in M_i^{\xi}=M_i$. But, from these parameters (and $(\xi_k)_{k<n}$ which is a member of $M_i^{\xi}$ due to its finiteness), we can construct $(M_j)_{j<i}$.
		\end{itemize}
	\end{proof}
	
	\begin{proof}[Proof of Theorem \ref{Theorem: Alpha-Stationary Preservation}]
	We may without loss of generality assume $\alpha$ is a successor ordinal.
		Let $T\subseteq P_{\omega_1}^{\alpha}(\omega_1)$ be stationary. We want to show that $T$ remains stationary after forcing with $\dP$. Fix a $\dP$-name for an $\alpha$ function $\dot{F}: \bigcup_{\beta<\alpha} P_{\omega_1}^{\beta}(\omega_1) \times [\omega_1]^{<\omega} \to \omega_1$. Fix some large enough regular cardinal $\Theta$. It suffices to show that there exists an $\alpha$-tower of countable elementary submodels $(N_i)_{i<\alpha}$ of $(H(\Theta), \in , <_{\theta},\dot{F})$, where $<_{\Theta}$ is a well order, such that 
		\begin{itemize}
		\item $(N_i\cap \omega_1)_{i<\alpha}\in T$, and 
		\item for all $i<\alpha$, $\sup N_i\cap \omega_2\in S$.
		\end{itemize}
To see why this is good, we can recursively construct a condition $p\in \dP(S)$ that is $(N_i, \dP(S))$-generic for all $i<\alpha$. Therefore, $p$ forces $(N_i\cap \omega_1)_{i<\alpha}=(N_i[G]\cap \omega_1)_{i<\alpha}$ is closed under $\dot{F}$, as desired.

By Proposition \ref{Proposition: Alpha-Projective Stationarity}, there is a continuous sequence $(M_i)_{i<\alpha}$ of $\omega_1$-sized elementary submodels of $\mathcal{H}=(H(\Theta),\in, <^*)$ such that $T,S, \dot{F}\in M_0$ and for each ordinal $i<\alpha$, $M_i\cap\omega_2\in S$ and if $i$ is a successor, $(M_j)_{j<i}\in M_i$. For each successor $i<\alpha$, let $x_i\subseteq M_i$ be the $<^*$-least that is countable and cofinal. 
Consider the following $\alpha$-function $G: \bigcup_{\beta<\alpha} P^\beta_{\omega_1}(\omega_1)\times [\omega_1]^{<\omega} \to \omega_1$: for $\beta$ either a successor ordinal or 0, $G((a_j)_{j<\beta}, x)= \Hull^{M_\beta}(\{(M_j)_{j<\beta}, (a_j)_{j<\beta}\}\cup x \cup x_\beta)\cap \omega_1$. Since $T$ is a stationary subset of $P^\alpha_{\omega_1}(\omega_1)$, we can find $(a_j)_{j<\alpha}\in T$ that is closed under $G$. For each $\beta<\alpha$ that is either a successor ordinal or $0$, define $N_\beta = \Hull^{M_\beta}(\{(M_j)_{j<\beta}, (a_j)_{j<\beta}\}\cup a_\beta \cup x_\beta)$. For $\beta$ limit, let us define $N_\beta=\bigcup_{i<\beta} N_i$.

	\begin{enumerate}
	\item for any $\beta<\alpha$, $\langle N_i: i\leq \beta\rangle\in N_{\beta+1}$. The reason is that $N_{\beta+1}$ contains $(M_j)_{j\leq \beta}$ and $(a_j)_{j\leq \beta}$ (note that $(x_j)_{j\leq \beta}$ is definable from $(M_j)_{j\leq \beta}$), which are exactly what is needed to define $\langle N_i: i\leq \beta\rangle$. 
	\item $(N_j\cap \omega_1)_{j<\alpha} = (a_j)_{j < \alpha}\in T$, 
	\item for any $j<\alpha$, $\sup N_j\cap \omega_2 = M_j\cap \omega_2\in S$.
	\end{enumerate}


	\end{proof}
	
	By Lemma \ref{Lemma: Dagger for alpha-stationarity}, the previous result implies that, in many models, the poset $\dP(S)$ is $(\alpha,\omega_1)$-semiproper for every $\alpha<\omega_1$. However, using a game very similar to $\Game^{\alpha}(I)$, we can obtain the same conclusion under weaker hypotheses. The crucial difference is that we only allow player I to play functions from $\omega_2$ to $\omega_1$ and thus also only allow player II to play ordinals in $\omega_1$.
	
	\begin{mydef}
		Let $I$ be an ideal on $\omega_2$ and let $\alpha$ be a countable ordinal. Let $\mathcal{X}$ be a set of functions from $\omega_2$ into $\omega_1$. We define the game $\Game_{\omega_1}^{\alpha}(I,\mathcal{X})$ as follows: The game lasts $\alpha$ many rounds. In round $\beta$, player I plays a function $f_{\beta}\in\mathcal{X}$. Player II responds with an ordinal $\xi_{\beta}<\omega_1$. At the end of the game, player II wins if and only if there is a set $X\in I^+$ such that for every $\eta\in X$ and $\beta<\alpha$, $f_{\beta}(\eta)\leq\xi_{\beta}$.
	\end{mydef}
	
	As before, we omit $\mathcal{X}$ if every function is allowed. The same way as in Section \ref{Section: Weak Ideal Game}, we can see the following:
	
	\begin{mylem}
		Suppose that $I$ is a ${<}\,\omega_2$-complete ideal on $\omega_2$ such that there is a set $\mathcal{B}\subseteq I^+$ which is dense with respect to $\subseteq$ and ${<}\,\omega_1$-closed. Then II has a winning strategy in $\Game_{\omega_1}^{\alpha}(I)$ for every $\alpha<\omega_1$.
	\end{mylem}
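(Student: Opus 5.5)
Player II's strategy is to build, on the side, a descending sequence of sets $(B_\beta)_{\beta<\alpha}$ in $\mathcal{B}$, using the density and ${<}\,\omega_1$-closure of $\mathcal{B}$ to pass through limit stages. The key observation is that a function $f\colon\omega_2\to\omega_1$ restricted to an $I$-positive set $B$ takes some single value on an $I$-positive subset of $B$. Indeed, $B=\bigcup_{\xi<\omega_1}\{\eta\in B\;|\;f(\eta)=\xi\}$ is a union of $\omega_1$ many pieces. If all of them were in $I$, then ${<}\,\omega_2$-completeness of $I$ would put $B$ in $I$, which is impossible. This replaces the normality/Fodor step that was needed for regressive functions in Section \ref{Section: Weak Ideal Game}; here completeness alone suffices, because the range is $\omega_1$.

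Concretely, II plays as follows. Let $B_{-1}$ be some element of $\mathcal{B}$ (such an element exists by density, since $\omega_2\in I^+$). At round $\beta$, II first lets $B'_\beta\in\mathcal{B}$ be a lower bound of $(B_\gamma)_{\gamma<\beta}$.
\begin{itemize}
\item If $\beta$ is a successor ordinal or $0$, II takes $B'_\beta$ to be the previous set itself.
\item If $\beta$ is a limit ordinal, II uses ${<}\,\omega_1$-closure. Since $\beta<\alpha<\omega_1$, the sequence $(B_\gamma)_{\gamma<\beta}$ is a countable $\subseteq$-descending sequence in $\mathcal{B}$, so it has a lower bound in $\mathcal{B}$. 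Here I read ``${<}\,\omega_1$-closed'' as: every countable descending sequence has a lower bound in $\mathcal{B}$, which is the intended meaning.
\end{itemize}
When I plays $f_\beta$, II uses the observation above to pick $\xi_\beta<\omega_1$ such that $\{\eta\in B'_\beta\;|\;f_\beta(\eta)=\xi_\beta\}\in I^+$. By density, II then chooses $B_\beta\in\mathcal{B}$ contained in this set, and plays $\xi_\beta$.

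At the end of the play, II takes a lower bound $X\in\mathcal{B}\subseteq I^+$ of the whole countable sequence $(B_\beta)_{\beta<\alpha}$, again by ${<}\,\omega_1$-closure. For every $\eta\in X$ and $\beta<\alpha$ we have $\eta\in B_\beta$, so $f_\beta(\eta)=\xi_\beta\le\xi_\beta$. Thus $X$ witnesses that II wins.

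The only point needing care is the interpretation of ``dense'' and ``closed'' for $\mathcal{B}$. I take ``dense'' to mean that every $I$-positive set contains a member of $\mathcal{B}$ literally, not merely modulo $I$. If only containment modulo $I$ were available, II would instead work with the sets minus the relevant $I$-small pieces; since there are only countably many such pieces and $I$ is ${<}\,\omega_2$-complete, removing them keeps the sets $I$-positive. I expect no real obstacle here. The argument is the same as Laver's observation quoted in Section \ref{Section: Adding Rainbow Sets}, and it is in fact simpler, because the bound on $f_\beta$ comes from completeness rather than from normality.
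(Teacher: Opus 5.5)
Your proof is correct and is essentially the argument the paper has in mind; the paper only says the lemma follows ``the same way'' as its earlier ideal-game arguments. II builds a $\subseteq$-descending sequence in the countably closed dense family $\mathcal{B}$, at each round shrinking to an $I$-positive set on which $f_\beta$ is constant (possible by ${<}\,\omega_2$-completeness since $f_\beta$ takes only $\omega_1$ values), and a final lower bound in $\mathcal{B}\subseteq I^+$ witnesses the win, with your observation that completeness replaces normality being exactly why no normality hypothesis is needed here.
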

	
	In particular, whenever $\kappa$ is measurable and $G$ is $\Coll(\omega_1,<\kappa)$-generic, there is an ideal $I$ on $\omega_2$ such that player II has a winning strategy in $\Game_{\omega_1}^{\alpha}(I)$ for every $\alpha<\omega_1$ (\cite{GalvinJechMagidorIdealGame}).
	
	On the other hand, we also have the following, which is shown the same way as Lemma \ref{Lemma: Alpha-Chang's Conjecture}:
	
	\begin{mylem}\label{lemma: alphaSCC}
		Suppose that $\alpha<\omega_1$ is an ordinal such that player II has a winning strategy in $\Game_{\omega_1}^{\omega\alpha}(I_{\text{bdd}}^{\cof(\omega_1)})$. Then whenever $\Theta$ is a large enough regular cardinal and $\mathcal{A}= (H(\Theta), \in, \alpha, <_{\theta})$ where $<_\Theta$ is a well order on $H(\Theta)$ and $(M_i)_{i<\alpha}$ is an $\alpha$-tower of countable elementary substructures of $\mathcal{A}$, there is $X\subseteq\omega_2\cap\cof(\omega_1)$ unbounded such that for every $\eta\in X$ and $i<\alpha$, $\Hull^{\mathcal{A}}(M_i\cup\{\eta\})\cap\omega_1=M_i\cap\omega_1$.
	\end{mylem}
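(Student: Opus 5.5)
The plan is to mirror the argument for Lemma \ref{Lemma: Alpha-Chang's Conjecture}, with $\omega_1$ as the target instead of $\sup(M_i\cap\omega_2)$. First we prove an analogue of Lemma \ref{Lemma: Reduced Skolem Hull} for $\Game_{\omega_1}^{\omega\alpha}(I_{\text{bdd}}^{\cof(\omega_1)})$. Fix the $\alpha$-tower $(M_i)_{i<\alpha}$. Since $\alpha\in M_0$ and $\Theta$ is large, $M_0$ contains a winning strategy $\sigma$ for II, by elementarity. We then let player I play a run of length $\omega\alpha$ in which, during the block $[\omega i,\omega(i+1))$, he enumerates (via a bijection $e_i\colon\omega\to M_i$ lying in $M_{i+1}$, or by bookkeeping inside later models) all functions $g\colon\omega_2\to\omega_1$ with $g\in M_i$.

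The key bookkeeping step, which we expect to be the main obstacle, is to check that each response $\xi_\beta$ with $\beta<\omega(i+1)$ lies in $M_i$, or at least in $M_{i'}$ for some $i'$ whose intersection with $\omega_1$ agrees with that of $M_i$. The subtlety is that the enumeration of $M_i$ is not itself an element of $M_i$. We handle this as in the proof of Proposition \ref{proposition: equivalence}. We reindex so that block $i$ enumerates functions in $M_{i}$ using a parameter from $M_{i+1}$, so the partial play up to the end of block $i$ lies in $M_{i+1}$ and the responses $\xi_\beta$ lie in $M_{i+1}\cap\omega_1$. We then aim to show that for each $i$ and each $g\in M_i$, its bound lands in $M_{i+1}\cap\omega_1$. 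If that only yields $\Hull(M_i\cup\{\eta\})\cap\omega_1\subseteq M_{i+1}\cap\omega_1$, we pass to a shifted tower: apply the argument to a finer tower $(M'_j)_{j<\omega\alpha}$ interpolating $M_i$ (as in the proof of Proposition \ref{proposition: equivalence}, taking $N_i=\bigcup_{j<\omega i}M'_j$), so that each countable-ordinal response is absorbed into the next limit model. This is why the hypothesis uses length $\omega\alpha$ rather than $\alpha$.

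Winning gives an unbounded $X\subseteq E^{\omega_2}_{\omega_1}$ such that for $\eta\in X$, $i<\alpha$ and any $g\colon\omega_2\to\omega_1$ in $M_i$, we have $g(\eta)\le\xi$ for some $\xi\in M_i\cap\omega_1$. Since $M_i\cap\omega_1$ is an initial segment of $\omega_1$, this gives $g(\eta)\in M_i\cap\omega_1$. Finally, Lemma \ref{Lemma: Skolem Hull Presentation}, with $\delta=\omega_2$, $x=\{\eta\}$ and $A=\omega_1$, gives
$$\Hull^{\mathcal{A}}(M_i\cup\{\eta\})\cap\omega_1=\{g(\eta)\mid g\colon\omega_2\to\omega_1,\ g\in M_i\}.$$
Here unary functions suffice, since $[\{\eta\}]^{<\omega}$ consists only of $\emptyset$ and $\{\eta\}$. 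Hence this hull meets $\omega_1$ exactly in $M_i\cap\omega_1$. The inclusion $\supseteq$ is trivial.
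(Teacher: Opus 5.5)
Your overall plan matches the paper's: run the analogue of Lemma~\ref{Lemma: Reduced Skolem Hull} for $\Game_{\omega_1}^{\omega\alpha}(I_{\text{bdd}}^{\cof(\omega_1)})$, then read off the hull via Lemma~\ref{Lemma: Skolem Hull Presentation}. Your final paragraph is fine. The gap is the bookkeeping step you flag yourself.

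With enumerations $e_i\in M_{i+1}$ you only get $\xi_\beta\in M_{i+1}\cap\omega_1$ for $\beta$ in block $i$, and that is useless. Since $M_i\cap\omega_1\in M_{i+1}$, we have $M_{i+1}\cap\omega_1>M_i\cap\omega_1$, so $g(\eta)\le\xi_\beta$ no longer places $g(\eta)$ in $M_i$. Your repair, passing to a finer tower $(M'_j)_{j<\omega\alpha}$ and $N_i=\bigcup_{j<\omega i}M'_j$, is not available. The lemma concerns an \emph{arbitrary given} $\alpha$-tower, and its conclusion is about $\Hull^{\mathcal{A}}(M_i\cup\{\eta\})$ for those particular $M_i$. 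In Proposition~\ref{proposition: equivalence} interpolation works only because player II builds the tower herself. A given successor model $M_i$ need not be such a union. Moreover, finding $N_i\supseteq M_i$ with $N_i\cap\omega_1=M_i\cap\omega_1$ is essentially Theorem~\ref{Theorem: Extend alpha-tower into stationary set}, which the paper derives \emph{from} this lemma. So the claim ``$\xi\in M_i\cap\omega_1$'' in your third paragraph is unsupported. Also, the length $\omega\alpha$ is needed simply because each countable $M_i$ takes $\omega$ moves to enumerate, not to absorb responses into limit models.

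The missing observation is that no shifting is needed: the responses already lie in $M_i$.
\begin{enumerate}
\item In block $j$, for $j=0$ or $j$ a successor, let player I follow a \emph{canonical} enumeration, e.g.\ the $<_\Theta$-least bijection from $\omega$ onto $\{g\in M_j: g\colon\omega_2\to\omega_1\}$. In blocks with limit index, let him play trivially.
\item For $i$ zero or a successor, the partial play $(f_\gamma,\xi_\gamma)_{\gamma<\omega i}$ is then definable in $\mathcal{A}$ from $\sigma\in M_0$ and $(M_j)_{j<i}\in M_i$, so it lies in $M_i$.
\item A position $(f_\gamma)_{\gamma\le\beta}$ with $\beta\in[\omega i,\omega(i+1))$ adds only finitely many elements of $M_i$. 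Hence it is in $M_i$, and $\xi_\beta=\sigma((f_\gamma)_{\gamma\le\beta})\in M_i\cap\omega_1$. Only finite restrictions of the enumeration need to be in $M_i$, not the enumeration itself.
\item For limit $i$, every $g\in M_i=\bigcup_{j<i}M_j$ already lies in some $M_j$ with $j<i$ a successor. So $g(\eta)\le\xi\in M_j\cap\omega_1\subseteq M_i\cap\omega_1$.
\end{enumerate}
With this in place, your last paragraph finishes the proof as in the paper.
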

	
	\begin{mybem}
		The conclusion of Lemma \ref{lemma: alphaSCC} is a version of the strong Chang's conjecture \cite{ForemanMagidorShelahMM} concerning countable towers. We may use $\alpha$-cofinal-Strong Chang's Conjecture ($\alpha$-$\mathrm{SCC}^{\cof}$) to denote it and use $<\omega_1$-$\mathrm{SCC}^{\cof}$ to denote that for all $\alpha<\omega_1$, $\alpha$-$\mathrm{SCC}^{\cof}$. See \cite{cox2020adjoiningthingswantsurvey} for a comprehensive survey on related topics.
	\end{mybem}
	
	Using the previous lemma, we can show:
	
	\begin{mysen}\label{Theorem: Extend alpha-tower into stationary set}
		Suppose player II has a winning strategy in $\Game_{\omega_1}^{\omega\alpha}(I_{\text{bdd}}^{\cof(\omega_1)})$. Let $S\subseteq E_{\omega}^{\omega_2}$ be stationary. Let $\Theta$ be a sufficiently large regular cardinal and let $\mathcal{A}$ be an algebra on $H(\Theta)$ extending $(H(\Theta),\in,S, \alpha, <_\Theta)$ where $<_{\Theta}$ is a well order on $H(\Theta)$. Whenever $(M_i)_{i<\alpha}$ is an $\alpha$-tower of countable elementary submodels of $\mathcal{A}$, there is an $\alpha$-tower $(M_i')_{i<\alpha}$ of countable elementary submodels of $\mathcal{A}$ such that for every $i<\alpha$, $M_i\subseteq M_i'$, $M_i\cap\omega_1=M_i'\cap\omega_1$ and $\sup(M_i\cap\omega_2)\in S$.
	\end{mysen}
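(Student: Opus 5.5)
The plan is to enlarge each $M_i$ by countably many ordinals of cofinality $\omega_1$, chosen via Lemma \ref{lemma: alphaSCC}, so that the new suprema land in $S$. (I read the final clause of the statement as requiring $\sup(M_i'\cap\omega_2)\in S$.) The ordinals will be chosen inside an $\omega_1$-sized model $N$ with $N\cap\omega_2\in S$. Lemma \ref{lemma: alphaSCC} applies because II wins $\Game_{\omega_1}^{\omega\alpha}(I_{\text{bdd}}^{\cof(\omega_1)})$. It says: for every $\alpha$-tower $\overline{K}=(K_i)_{i<\alpha}$ of countable elementary submodels of $\mathcal{A}$ there is an unbounded $X_{\overline{K}}\subseteq E^{\omega_2}_{\omega_1}$ such that for $\eta\in X_{\overline{K}}$ and all $i<\alpha$,
\[\Hull^{\mathcal{A}}(K_i\cup\{\eta\})\cap\omega_1=K_i\cap\omega_1 .\]
Let $\overline{K}\mapsto X_{\overline{K}}$ be the $<_\Theta$-least such assignment. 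Since $S$ is stationary, I would fix $N\prec H(\Theta^+)$ of size $\omega_1$ with $\mathcal{A}$, $(M_i)_{i<\alpha}$ and this assignment in $N$, $\omega_1\subseteq N$, and $\delta:=N\cap\omega_2\in S$. As $\delta\in E^{\omega_2}_\omega$, fix $\gamma_0<\gamma_1<\dots$ cofinal in $\delta$.

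Next, I build by recursion on $n$ ordinals $\eta_n\in N\cap\omega_2$ and towers $\overline{M}^n=(M^n_i)_{i<\alpha}$. Put $M^0_i=M_i$. Given $\overline{M}^n\in N$ and assuming it is an $\alpha$-tower, elementarity of $N$ gives $X_{\overline{M}^n}\in N$, which is unbounded in $\omega_2$. Hence there is $\eta_n\in X_{\overline{M}^n}\cap N$ with $\eta_n>\gamma_n$ and $\eta_n>\eta_{n-1}$. Then define $M^{n+1}_i:=\Hull^{\mathcal{A}}(M^n_i\cup\{\eta_n\})$. This adds no new countable ordinals, and $\overline{M}^{n+1}$ is definable in $\mathcal{A}$ from $\overline{M}^0$ and the finite set $\{\eta_0,\dots,\eta_n\}$, so it lies in $N$. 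Finally set $M_i':=\bigcup_n M^n_i$. Then $M_i\subseteq M_i'$, $M_i'\cap\omega_1=M_i\cap\omega_1$, and each $M_i'$ is a countable elementary submodel of $\mathcal{A}$ as an increasing union. Also
\[\sup(M_i'\cap\omega_2)=\sup_n\eta_n=\delta\in S,\]
because every element of $M^{n+1}_i\cap\omega_2$ lies in $N$, hence below $\delta$, while the $\eta_n$ are cofinal in $\delta$. Continuity at limit levels is inherited: $\Hull(\bigcup_{j<i}M^n_j\cup\{\eta_n\})=\bigcup_{j<i}\Hull(M^n_j\cup\{\eta_n\})$.

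The main obstacle is the tower requirement $(M'_j)_{j<i}\in M'_i$ for successor $i$, both at the finite stages and in the limit. At finite stages the argument is easy. $(M^{n+1}_j)_{j<i}$ is definable from $(M^n_j)_{j<i}\in M^n_i$ and $\eta_n\in M^{n+1}_i$, so it lies in $M^{n+1}_i$. So each $\overline{M}^{n}$ is a tower, which is exactly what is needed to apply Lemma \ref{lemma: alphaSCC} at the next stage.

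In the limit, however, $(M'_j)_{j<i}$ codes the whole sequence $\langle\eta_n:n<\omega\rangle$, which need not belong to $M'_i$. My first attempt to repair this is to make the choice of $\eta_n$ canonical from data available in $M'_i$. Choose $N$ in advance and put $\eta_n:=\min(X_{\overline{M}^n}\setminus\gamma_n)$, noting that $X_{\overline{M}^n}\in N$. Then, provided $\langle\gamma_n:n<\omega\rangle$ can be placed into $M_0$ from the start, the sequence $\langle\eta_n\rangle$ is definable from it and $\overline{M}^0$. To arrange this, I would set up $N$ and the cofinal sequence before forming the hulls; alternatively, I would apply the stationarity of $S$ to a model containing a club-guessing parameter.

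If that bookkeeping fails, the fallback is to work level by level. At each successor level $i$ I would use a separate $\omega$-sequence of $\eta$'s, interleaved so that the sequences for levels $j<i$ are added to $M'_i$ as elements via an additional round of Lemma \ref{lemma: alphaSCC}. This uses the full length $\omega\alpha$ of the game, which is available by hypothesis, to absorb the extra countable parameter without changing $M'_i\cap\omega_1$.
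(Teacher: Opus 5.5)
For $\alpha=1$ your construction is correct, and it is exactly the paper's base case. For $\alpha\ge 2$ it cannot work, and the obstruction is not a bookkeeping issue. You add the same ordinals $\eta_n$ to every level, all lying in $N$ and cofinal in $\delta=N\cap\omega_2$, so $\sup(M_i'\cap\omega_2)=\delta$ for every $i<\alpha$. In an $\alpha$-tower, however, $M'_j\in M'_{j+1}$ gives $\sup(M'_j\cap\omega_2)\in M'_{j+1}$. So the suprema must strictly increase at successor steps, and $(M'_j)_{j<i}\in M'_i$ fails at every successor $i$ however canonically the $\eta_n$ are chosen.

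Your first repair makes this explicit. If $\langle\eta_n\rangle$ or $\langle\gamma_n\rangle$ were definable from parameters in $M'_i$, then $\delta\in M'_i$. Besides, the tower $(M_i)_{i<\alpha}$ is given, so you cannot place $\langle\gamma_n\rangle$ into $M_0$. The fallback is too vague to count as a proof, and the mechanism it names is not available. Lemma \ref{lemma: alphaSCC} only controls $\Hull^{\mathcal{A}}(K_i\cup\{\eta\})\cap\omega_1$ for single ordinals $\eta$ from the unbounded set it produces. It says nothing about adjoining an arbitrary countable sequence of ordinals as an element. Also, the length $\omega\alpha$ of the game is already used up by that lemma for a single $\alpha$-tower; it is not spare room for absorbing extra parameters.

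The missing idea is induction plus elementarity. The paper proves, by induction on successor $\alpha$, a claim about countable sets $x_i\subseteq\omega_2$ that the models can express via Lemma \ref{Lemma: Skolem Hull Presentation}:
\begin{itemize}
\item $\Hull^{\mathcal{A}}(M_i\cup x_i)\cap\omega_1=M_i\cap\omega_1$;
\item $\sup(\Hull^{\mathcal{A}}(M_i\cup x_i)\cap\omega_2)\in S$;
\item $(x_j)_{j<i}\in\Hull^{\mathcal{A}}(M_i\cup x_i)$ for successor $i$;
\item the hulls are continuous at limits.
\end{itemize}

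Suppose $\alpha=\alpha'+1$ with $\alpha'$ a successor. Since $(M_i)_{i<\alpha'}\in M_{\alpha'}$, the induction hypothesis and elementarity yield the lower witnesses $(x_i)_{i<\alpha'}$ as an element of $M_{\alpha'}$. Only the top level is then handled by your one-model argument, with $N\ni M_{\alpha'}$, which automatically gives a strictly larger supremum.

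Suppose instead $\alpha'$ is a limit. Your recursion is exactly what produces a single sequence $\eta_0,\eta_1,\dots$ cofinal in a point of $S$ that is good for all levels at once. The difference is that the paper releases the $\eta_n$ gradually. Levels $i\in[\alpha_n,\alpha_{n+1})$ receive only $\eta_0,\dots,\eta_{n-1}$. The induction hypothesis is then applied inside $\Hull^{\mathcal{A}}(M_{\alpha_{n+1}}\cup\{\eta_0,\dots,\eta_{n-1}\})$ to the block $(\Hull^{\mathcal{A}}(M_i\cup\{\eta_0,\dots,\eta_{n-1}\}))_{\alpha_n\le i<\alpha_{n+1}}$. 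This gives each lower level its own smaller supremum in $S$, with its predecessors as an element. The full sequence $\langle\eta_n\rangle$ appears only at the limit level $\alpha'$, where no membership requirement applies.
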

	
	\begin{proof}
		We prove the following claim by induction. For technical reasons, we focus on the cases where $\alpha$ is a successor.
		
		\begin{myclaim}
			Let $\alpha$ be a countable successor ordinal. Whenever $(M_i)_{i<\alpha}$ is an $\alpha$-tower of countable elementary submodels of $\mathcal{A}$, there is a sequence $(x_i)_{i<\alpha}$ of countable subsets of $\omega_2$ such that, for every $i<\alpha$,
			$$\Hull^{\mathcal{A}}(M_i\cup x_i)\cap\omega_1=M_i\cap\omega_1\text{ and }\sup\left(\Hull^{\mathcal{A}}(M_i\cup x_i)\cap\omega_2\right)\in S$$
			Furthermore, if $i$ is a successor, $(x_j)_{j<i}\in \Hull^{\mathcal{A}}(M_i\cup x_i)$ and if $i$ is a limit, $\Hull^{\mathcal{A}}(M_i\cup x_i)=\bigcup_{j<i}\Hull^{\mathcal{A}}(M_j\cup x_j)$.
		\end{myclaim}
		
		\begin{proof}
			First let $\alpha=1$, so there is just one model $M$. Let $N\prec H(\Theta)$ be a model with size $\omega_1$ such that $\omega_1\subseteq N$, $M\in N$ and $N\cap\omega_2\in S$. In particular, there is a countable increasing sequence $(\delta_n)_{n\in\omega}$ converging to $N\cap\omega_2$. Recursively apply Lemma \ref{lemma: alphaSCC} inside $N$ to find a sequence $(\eta_n)_{n\in\omega}$ such that for each $n\in\omega$, $\eta_n>\delta_n$ and $\Hull^{\mathcal{A}}(M_i\cup\{\eta_0,\dots,\eta_n\})\cap\omega_1=\Hull^{\mathcal{A}}(M_i\cup\{\eta_0,\dots,\eta_{n-1}\})$. Note that $N$ can define $\Hull^{\mathcal{A}}(M_i\cup\{\eta_0,\dots,\eta_{k-1}\})$ thanks to Lemma \ref{Lemma: Skolem Hull Presentation}.
			
			Let $M':=\bigcup_{n\in\omega}\Hull^{\mathcal{A}}(M\cup\{\eta_0,\dots,\eta_{n-1}\})$. It follows that $M'\subseteq N$ and thus, by construction, $\sup(M'\cap\omega_2)=N\cap\omega_2\in S$. Furthermore, by induction,
			$$M'\cap\omega_1=\bigcup_{n\in\omega}\Hull^{\mathcal{A}}(M\cup\{\eta_0,\dots,\eta_{n-1}\})\cap\omega_1=M\cap\omega_1$$
			
			So $x:=\{\eta_0,\eta_1,\dots\}$ is as required.
			
			Now suppose that $\alpha=\alpha'+1$, where $\alpha'$ is a successor ordinal. Since $M_{\alpha'}\prec H(\Theta)$ and $(M_i)_{i<\alpha'}\in M_{\alpha'}$, we can find a sequence $(x_i)_{i<\alpha'}\in M_{\alpha'}$ be the inductive assumption (again using Lemma \ref{Lemma: Skolem Hull Presentation} to make sure that $M_{\alpha'}$ can define the Skolem hulls) and elementarity. Then just find $x_{\alpha'}$ as in the case $\alpha=1$. It follows that $(x_i)_{i<\alpha}$ is as required.
			
			Lastly, let $\alpha=\alpha'+1$, where $\alpha'$ is a limit ordinal. Let $(\alpha_n)_{n\in\omega}$ be an increasing sequence of successor ordinals converging to $\alpha'$, where, for notational simplicity, $\alpha_0=0$. As in the case $\alpha=1$, find a countable set $x=\{\eta_0,\eta_1,\dots\}$ such that, for every $i<\alpha$, $\Hull^{\mathcal{A}}(M_i\cup x)\cap\omega_1=M_i\cap\omega_1$ and such that $\sup(\Hull^{\mathcal{A}}(M_{\alpha'}\cup x)\cap\omega_2)\in S$.  However, $(\Hull^{\mathcal{A}}(M_i\cup x))_{i\in \alpha}$ is not an $\alpha$-tower, some more work is needed.
			
			For every $n\in\omega$,
			$$(\Hull^{\mathcal{A}}(M_i\cup\{\eta_0,\dots,\eta_{n-1}\}))_{\alpha_n\leq i<\alpha_{n+1}}\in\Hull^{\mathcal{A}}(M_{\alpha_{n+1}}\cup\{\eta_0,\dots,\eta_{n-1}\})$$ by Lemma \ref{Lemma: Skolem Hull Presentation}. By applying the inductive assumption to this sequence inside of $\Hull^{\mathcal{A}}(M_{\alpha_{n+1}}\cup\{\eta_0,\dots,\eta_{n-1}\})$, we obtain a sequence $(y_i)_{\alpha_n\leq i<\alpha_{n+1}}$ such that, for every $i\in[\alpha_n,\alpha_{n+1})$,
			\begin{align*}
				\Hull^{\mathcal{A}}(M_i\cup \{\eta_0,\dots,\eta_{n-1}\}\cup y_i)\cap\omega_1 & =\Hull^{\mathcal{A}}(M_i\cup\{\eta_0,\dots,\eta_{n-1}\})\cap\omega_1 \\
				& = M_i\cap\omega_1
			\end{align*}
			and
			$$\sup\left(\Hull^{\mathcal{A}}(M_i\cup\{\eta_0,\dots,\eta_{n-1}\}\cup y_i)\cap\omega_2\right)\in S$$
			such that $(y_j)_{\alpha_n\leq j<i}\in \Hull^{\mathcal{A}}(M_i\cup\{\eta_0,\dots,\eta_{n-1}\}\cup y_i)$ for $i$ successor and $\Hull^{\mathcal{A}}(M_i\cup \{\eta_0,\dots,\eta_{n-1}\}\cup y_i)=\bigcup_{\alpha_n\leq j<i}\Hull^{\mathcal{A}}(M_j\cup \{\eta_0,\dots,\eta_{n-1}\}\cup y_j)$ for $i$ limit.
			
			Define, for $i\in[\alpha_n,\alpha_{n+1})$, $x_i:=y_i\cup\{\eta_0,\dots,\eta_{n-1}\}$ and $x_{\alpha'}:=\{\eta_0,\eta_1,\dots\}$. We show that $(x_i)_{i<\alpha}$ is as required.			
			
			Let $i<\alpha$ be a successor. We show that $(x_j)_{j<i}\in\Hull^{\mathcal{A}}(M_i\cup x_i)$: First suppose that $i=\alpha_n$ for some $n$. Then we have that for every $n'<n$, $(y_j)_{\alpha_{n'}\leq j<\alpha_{n'+1}}\in M_{\alpha_{n'+1}}\subseteq M_{\alpha_n}$. Therefore, the sequence $(y_j)_{j<\alpha_n}=(y_j)_{j<i}\in M_i$. However, from this sequence we can easily define $(x_j)_{j<i}$.
			
			Now suppose that $\alpha_n<i<\alpha_{n+1}$. By the previous argumentation, $(x_j)_{j<\alpha_n}\in M_{\alpha_n}\subseteq M_i$. Additionally, by assumption, $(y_j)_{\alpha_n\leq j<i}\in M_i$. As before, this allows us to construct $(x_j)_{\alpha_n\leq j<i}$ inside $M_i$ and thus $(x_j)_{j<i}$.
			
			Lastly, let $i<\alpha$ be a limit. If $i<\alpha'$, $\Hull^{\mathcal{A}}(M_i\cup x_i)=\bigcup_{j<i}\Hull^{\mathcal{A}}(M_j\cup x_j)$ holds by assumption. If $i=\alpha'$, we have
			$$\Hull^{\mathcal{A}}(M_i\cup x_i)=\bigcup_{j<i}\Hull^{\mathcal{A}}(M_j\cup x_j)$$
			since the right-hand union, which is contained in $M_{\alpha'}$, is an elementary substructure of $\mathcal{A}$ containing $M_i$ and $x_i$.
		\end{proof}
		
		Now the theorem follows easily, since in the above situation, $(\Hull^{\mathcal{A}}(M_i\cup x_i))_{i<\alpha}$ is the required $\alpha$-tower, again thanks to Lemma \ref{Lemma: Skolem Hull Presentation}.
	\end{proof}
	
	As a corollary to Theorem \ref{Theorem: Extend alpha-tower into stationary set}, we can show that in many natural situtations, $\dP(S)$ is $(\alpha,\omega_1)$-semiproper:
	
	\begin{mypro}
		Suppose that $\alpha<\omega_1$ is an ordinal such that player II has a winning strategy in $\Game_{\omega_1}^{\omega\alpha}(I_{\text{bdd}}^{\cof(\omega_1)})$. Let $S\subseteq E_{\omega}^{\omega_2}$ be stationary. Then $\dP(S)$ is $(\alpha,\omega_1)$-semiproper.
	\end{mypro}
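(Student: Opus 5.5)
We verify Definition \ref{definition: (alpha,delta)-semiproper} directly, taking $x$ to be a parameter coding $S$ (together with $\alpha$ and a well order $<_\Theta$). Let $(M_i)_{i<\alpha}$ be an $\alpha$-tower of countable elementary submodels of $H(\Theta)$ with $\dP(S),x\in M_0$, and let $p\in\dP(S)\cap M_0$. We may assume $\alpha$ is a successor, since a tower of limit length is an initial segment of a tower of length $\alpha+1$, and semigenericity for the longer tower implies it for the shorter one. Apply Theorem \ref{Theorem: Extend alpha-tower into stationary set} with $\mathcal{A}=(H(\Theta),\in,S,\alpha,<_\Theta)$. This gives an $\alpha$-tower $(M_i')_{i<\alpha}$ with $M_i\subseteq M_i'$, $M_i'\cap\omega_1=M_i\cap\omega_1$, and $\delta_i:=\sup(M_i'\cap\omega_2)\in S$ for every $i<\alpha$. (I read the conclusion of that theorem as concerning $\sup(M_i'\cap\omega_2)$.)

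It then suffices to find $q\leq p$ that is $(M_i',\dP(S))$-generic for every $i<\alpha$. Such a $q$ forces $M_i'[\dot G]\cap\omega_1=M_i'\cap\omega_1=M_i\cap\omega_1$. Since $M_i[\dot G]\subseteq M_i'[\dot G]$, it follows that $q$ forces $M_i[\dot G]\cap\omega_1=M_i\cap\omega_1$, so $q$ is $((M_i)_{i<\alpha},\dP(S))$-semigeneric.

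To build $q$, we follow the standard Foreman--Magidor--Shelah argument, run along the tower. We construct a descending sequence of conditions $(p_n)_{n<\omega}$ as a single $\omega$-length recursion. Fix a bookkeeping $e\colon\omega\to\alpha\times\omega$ that lists, for each $i<\alpha$, every dense set in $M_i'$ infinitely often. Also fix, for each $i$, a cofinal $\omega$-sequence in $\delta_i$.

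At step $n$, with $e(n)=(i,k)$, we extend $p_n$ inside $M_i'$ to meet the $k$-th dense set of $M_i'$. We also push the top of the condition past the $n$-th point of the cofinal sequences for each $M_j'$ that contains $p_n$. We must arrange that for every $i$, the conditions lying in $M_i'$ are cofinal in the sequence, so that $\sup_n\max(p_n)=\delta_i$ for the relevant models.

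Here lies the main obstacle. A single $\omega$-sequence of conditions has one supremum, but we need the generic function to pass through every $\delta_i$ with $i<\alpha$, and these form an increasing sequence.

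So I will instead construct $q$ by induction on $i<\alpha$, producing a continuous increasing function whose values at the appropriate limit stages are $\delta_i$.

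Given $q_i$ that is generic for $(M_j')_{j\le i}$ and has $\max q_i=\delta_i$, we use $q_i\in M_{i+1}'$. This is the point where the tower property $(M_j')_{j\le i}\in M_{i+1}'$ is needed, and it holds because we can pick the generic condition $<_\Theta$-least in $M_{i+1}'$. We then run the $\omega$-recursion inside $M_{i+1}'$ with sup $\delta_{i+1}$, placing $\delta_{i+1}$ at the top. This step is legitimate because $\delta_{i+1}\in S$ and $\operatorname{cf}(\delta_{i+1})=\omega$.

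At limit $i$, we take unions. By continuity of the tower, $\delta_i=\sup_{j<i}\delta_j$, so the union extended by the value $\delta_i$ is a continuous condition, because $\delta_i\in S$.

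Genericity for $M_{i+1}'$ needs one more check. Any dense $D\in M_{i+1}'$ is met by a condition in $M_{i+1}'$ below $q_i$, since $q_i\in M_{i+1}'$. The earlier models are unaffected because we only extend conditions.

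The delicate point is the limit step. Because we build only $\omega$-sequences at successor stages and take unions at limits, each intermediate condition has countable length and is genuinely in the next model. The result $q=q_{\alpha-1}$ is as required.
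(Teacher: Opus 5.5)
Your proposal is essentially the paper's proof. You extend the tower via Theorem~\ref{Theorem: Extend alpha-tower into stationary set}, correctly reading its conclusion as $\sup(M_i'\cap\omega_2)\in S$. You then build a descending sequence of $(M_i',\dP(S))$-generic conditions, chosen canonically so each lies in the next model, by an $\omega$-recursion inside $M_{i+1}'$ at successors and by unions topped with $\sup(M_i'\cap\omega_2)$ at limits. Semigenericity then follows from $M_i'\cap\omega_1=M_i\cap\omega_1$.

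One point in your reduction of a limit $\alpha$ to $\alpha+1$ should be made explicit; the paper glosses over it too, via its $q:=p_\alpha$. As stated, the theorem applied to the $(\alpha+1)$-tower would require a winning strategy in $\Game^{\omega(\alpha+1)}_{\omega_1}(I_{\text{bdd}}^{\cof(\omega_1)})$, not just $\Game^{\omega\alpha}_{\omega_1}$. The fix is to note that the appended top model is $\bigcup_{i<\alpha}M_i$, so $\Hull^{\mathcal{A}}(\bigcup_{i<\alpha}M_i\cup\{\eta\})=\bigcup_{i<\alpha}\Hull^{\mathcal{A}}(M_i\cup\{\eta\})$. Hence Lemma~\ref{lemma: alphaSCC} for the $\alpha$-tower already covers the top model, and $\Game^{\omega\alpha}_{\omega_1}$ suffices.
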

	
	\begin{proof}
		Let $(M_i)_{i<\alpha}$ be an $\alpha$-tower of countable elementary substructures of $H(\Theta)$. By Theorem \ref{Theorem: Extend alpha-tower into stationary set}, we can find an $\alpha$-tower $(M_i')_{i<\alpha}$ pointwise end-extending $(M_i)_{i<\alpha}$ such that for every $i<\alpha$, $\sup(M_i\cap\omega_2)\in S$. Let $p\in\dP(S)\cap M_0$.
		
		\begin{myclaim}
			There exists a condition $q\in\dP(S)$ with $q\leq p$ such that $q$ is $(M_i',\dP(S))$-generic for every $i<\alpha$.
		\end{myclaim}
		
		\begin{proof}
			We define a descending sequence $(p_i)_{i<\alpha}$ of conditions in $\dP(S)$ such that, for every $i<\alpha$, the following holds:
			\begin{enumerate}
				\item $p_i$ is $(M_i',\dP(S))$-generic;
				\item $\dom(p_i)=(M_i'\cap\omega_1)+1$ and $p_i(M_i\cap\omega_1)=\sup(M_i'\cap\omega_2)$;
				\item $p_i\in M_{i+1}'$ and if $i$ is a successor, $(p_j)_{j<i}\in M_i'$.
			\end{enumerate}
			
			Let $i$ be a successor ordinal or $0$ (we let $p_{-1}:=p$ for notational convenience). Let $(D_n)_{n\in\omega}$ enumerate all open dense subsets of $\dP(S)$ lying in $M_i$. Define a descending sequence $p_i^n$ of conditions extending $p_{i-1}$ such that $p_i^n\in D_n\cap M_i'$. It follows easily that if we let $p_i':=\bigcup_{n\in\omega}p_i^n$, $\dom(p_i')=M_i'\cap\omega_1$ and $\im(p_i')$ is a cofinal subset of $\sup(M_i'\cap\omega_2)$. Thus, $p_i:=p_i'\cup\{(M_i'\cap\omega_1,\sup(M_i'\cap\omega_2))\}\in\dP(S)$ and is as required.
			
			Let $i\leq\alpha$ be a limit ordinal. Let $p_i':=\bigcup_{j<i}p_j$. It follows as before that $\dom(p_i')=M_i'\cap\omega_1$ and $\im(p_i')$ is a cofinal subset of $\sup(M_i'\cap\omega_2)$. Thus, $p_i:=p_i'\cup\{(M_i'\cap\omega_1,\sup(M_i'\cap\omega_2))\}$ is as required.
			
			Lastly, $q:=p_{\alpha}$ is the required condition.
		\end{proof}
		
		Now it follows that $q$ is $(M_i)_{i<\alpha}$-semigeneric: Let $G$ be any $\dP(S)$-generic filter containing $q$. Let $\tau\in M_i$ be a name for a countable ordinal. Then $\tau\in M_i'$ and, since $q$ is $(M_i',\dP(S))$-generic, $\tau_G\in M_i'$. But $M_i'\cap\omega_1=M_i\cap\omega_1$, so $\tau_G\in M_i$.
	\end{proof}
	
	\section{The Main Theorems}\label{section: maintheorem}
	
	In this section, we prove the main theorems. As a warm-up, which does not use revised countable support iterations, we first show that it is consistent that every $\omega$-bounded coloring on pairs in $\omega_2$ has a rainbow subset of order type $\omega_1$ stationary in its supremum.
	
	\begin{mysen}\label{Theorem: Stationary Rainbow Set}
		Suppose that $\kappa$ is a cardinal such that there are stationarily many weakly compact cardinals below $\kappa$. Let $G$ be $\Coll(\omega_1,<\kappa)$-generic.
		
		In $V[G]$, $\kappa\to^*(\omega_1\text{-st})_{\omega\text{-bdd}}^2$.
	\end{mysen}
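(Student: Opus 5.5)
The plan is to reflect the coloring down to a weakly compact $\lambda<\kappa$ and use the poset $\dP_f$ from Theorem \ref{Theorem: Adding Rainbow Set} over $V[G\uhr\lambda]$. The point is that $\dP_f$ is absorbed into the rest of the collapse. A rainbow set stationary in its supremum is then read off from the tail collapse, which gives $\lambda$ cofinality $\omega_1$.

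\textbf{Step 1: Reflection.} In $V[G]$ fix an $\omega$-bounded $c\colon[\kappa]^2\to\kappa$. By Lemma \ref{Lemma: Decomposition into Normal} we may shrink to a club and reindex, so assume $c$ is normal. Let $\dot c$ be a name for $c$ in $V$. Since $\Coll(\omega_1,<\kappa)$ is $\kappa$-cc, the set of $\lambda<\kappa$ satisfying the following two conditions contains a club:
\begin{enumerate}
\item $\dot c\uhr[\lambda]^2$ is (equivalent to) a $\Coll(\omega_1,<\lambda)$-name;
\item $c\uhr[\lambda]^2$ is still normal and $\omega$-bounded into $\lambda$.
\end{enumerate}
For (2), take closure points of the map sending a colour to the countable set of pairs using it. By hypothesis we can pick such a $\lambda$ that is weakly compact. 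Let $G_\lambda=G\cap\Coll(\omega_1,<\lambda)$ and $f:=c\uhr[\lambda]^2\in V[G_\lambda]$.

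\textbf{Step 2: Adding the partition.} In $V[G_\lambda]$ we have $\lambda=\omega_2$ and $\CH$. Theorem \ref{Theorem: Ideal Game Upper Bound} says that for every $\omega_2$-sized $\mathcal{X}$ and every $\alpha<\omega_1$, player II wins $\Game^{\alpha}(I_{\text{bdd}}^{\cof(\omega_1)},\mathcal{X})$. This ideal is ${<}\,\omega_2$-complete and contains $E^{\omega_2}_\omega$, so Theorem \ref{Theorem: Adding Rainbow Set} applies. We get a countably closed $\dP_f$ that preserves $\lambda$ and adds $l\colon\lambda\to\omega_1$ all of whose fibres are $f$-rainbow. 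Since $|\dP_f|<\kappa$ and it is countably closed in $V[G_\lambda]$, the usual absorption argument shows that $\Coll(\omega_1,<\kappa)$ over $V[G_\lambda]$ is equivalent to $\dP_f*\dot{\Coll}(\omega_1,\lambda)*\dot{\Coll}(\omega_1,<\kappa)$. The last two factors are taken in the respective extensions, and $\Coll(\omega_1,<\kappa)$ is the quotient $\Coll(\omega_1,<\kappa)/G_\lambda$. So we may assume $V[G]=V[G_\lambda][H][K][K']$, where $H$ is $\dP_f$-generic and $K$ collapses $\lambda$ to $\omega_1$.

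\textbf{Step 3: Finding the stationary set.} In $V[G_\lambda][H]$, $\lambda$ is still regular and $E^\lambda_\omega=\bigcup_{j<\omega_1}l^{-1}[\{j\}]\cap E^\lambda_\omega$. A union of $\omega_1<\lambda$ nonstationary sets is nonstationary, so some fibre $S_j$ is stationary in $E^\lambda_\omega$. Let $\langle\gamma_\xi:\xi<\omega_1\rangle$ be the continuous increasing cofinal sequence in $\lambda$ read off from $K$. This can be done by density, since conditions are countable partial maps $\omega_1\to\lambda$.

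We claim $T=\{\xi<\omega_1 : \xi\text{ limit},\ \gamma_\xi\in S_j\}$ is stationary in $V[\dots][K]$. To see this, fix a name $\dot C$ for a club in $\omega_1$ and a condition $r$. Using stationarity of $S_j$, pick $N\prec H(\Theta)$ of size $\omega_1$, closed under $\omega$-sequences, containing $\dot C$ and $r$, with $N\cap\lambda\in S_j$. This is possible because $\CH$ holds and $S_j\subseteq\cof(\omega)$. Build inside $N$ an $\omega$-chain of conditions below $r$ deciding larger and larger elements of $\dot C$ and of the sequence, cofinal in $N\cap\lambda$ and in $N\cap\omega_1$. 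The union, with $\gamma$ at the limit index set to $N\cap\lambda$, is a condition forcing a point of $T\cap\dot C$.

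The later collapse $K'$ is countably closed, hence preserves the stationarity of $T$. Then $A=\{\gamma_\xi:\xi\in T\}\subseteq S_j$ is $c$-rainbow, has order type $\omega_1$, and is stationary in $\sup A=\lambda$. Since $\langle\gamma_\xi\rangle$ is continuous, $A$ is the image of a stationary set under a normal map.

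\textbf{Main obstacle.} I expect the main difficulty to be Steps 2 and 3: the absorption, namely that the countably closed $\dP_f$ of size $\lambda^{\omega_1}=\lambda$ followed by the collapse is equivalent to the quotient, and the density argument making the generic cofinal sequence hit $S_j$ stationarily often. For the density argument I would first try the construction above, with $N$ chosen of size $\omega_1$ and $N\cap\lambda\in S_j$.
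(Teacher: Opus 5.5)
Your Steps 1 and 2 follow the paper's proof. You reflect the colouring to a weakly compact $\lambda$, obtain $\dP_f$ in $V[G\uhr\lambda]$ from Theorems \ref{Theorem: Ideal Game Upper Bound} and \ref{Theorem: Adding Rainbow Set}, and absorb it into the tail of the collapse. Your aside that $|\dP_f|=\lambda^{\omega_1}=\lambda$ is false: conditions contain $\omega_1$-sized submodels of $H(\omega_4)^{V[G_\lambda]}$, so $|\dP_f|\geq\lambda^{++}$. This is harmless, because absorption only needs countable closure and $|\dP_f|<\kappa$.

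The genuine problem is the density argument in Step 3, which fails as written. Suppose $N\prec H(\Theta)$ has size $\omega_1$ and is closed under $\omega$-sequences. Then $\omega_1\subseteq N$, and every countable subset of $N\cap\lambda$ belongs to $N$, so its supremum lies in $N\cap\lambda$. Hence $\cf(N\cap\lambda)=\omega_1$, and $N\cap\lambda$ can never lie in $S_j\subseteq\cof(\omega)$; $\CH$ does not help here. Moreover, for such $N$ we have $N\cap\omega_1=\omega_1$, so no $\omega$-chain of conditions can be ``cofinal in $N\cap\omega_1$''.

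The repair is routine. Take $N$ \emph{countable} with $r,\dot C,S_j\in N$ and $\sup(N\cap\lambda)\in S_j$; such $N$ exist because $S_j$ is stationary in $E^\lambda_\omega$. Build an $(N,\Coll(\omega_1,\lambda))$-generic chain $r_n\in N$ below $r$. Its union has domain $N\cap\omega_1$ and range cofinal in $\sup(N\cap\lambda)$. Taking $\gamma_\xi=\sup g''\xi$ for the generic surjection $g$, the union forces $N\cap\omega_1\in\dot C\cap T$. Alternatively, keep an $\omega_1$-sized $N$ with $N\cap\lambda\in S_j$ but drop the closure requirement. Conditions lying in $N$ then have range bounded below $N\cap\lambda$, and the limit index becomes the countable supremum of the domains of the chain.

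The paper avoids this density argument entirely. It notes that the tail forcing is countably closed, hence proper. Therefore the stationary rainbow fibre remains stationary in $\lambda$ even after $\lambda$ acquires cofinality $\omega_1$. Your generic-sequence construction is a hands-on proof of that preservation for $\Coll(\omega_1,\lambda)$. Its advantage is that it explicitly produces a set of order type exactly $\omega_1$. The paper leaves that point implicit; it amounts to intersecting the fibre with a club in $\lambda$ of order type $\omega_1$.
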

	
	\begin{proof}
		For $\nu<\kappa$, denote by $G\uhr\nu$ the $\Coll(\omega_1,<\nu)$-generic filter induced by $G$.
		
		Let $S\subseteq\kappa$ be the set of weakly compact cardinals below $\kappa$ in $V$. Suppose that, in $V[G]$, $f\colon[\kappa]^2\to\kappa$ is an $\omega$-bounded coloring on $\kappa$. Let $M\prec H(\Theta)$ be such that $|M|<\kappa$, $M^{\omega_1}\subseteq M$, $f\in M$ and $\nu:=M\cap\kappa\in S$.
		
		Clearly, since $f$ is $\omega$-bounded, $f_{\nu}:=f\uhr[\nu]^2 \in V[G\restriction \nu]$ is $\omega$-bounded as well. Furthermore, since $\nu$ is weakly compact, in $V[G\uhr\nu]$, the hypotheses of Theorem \ref{Theorem: Adding Rainbow Set} are satisfied by Theorem \ref{Theorem: Ideal Game Upper Bound}, so there exists a poset $\dP_{f_{\nu}}$ which is countably closed, preserves $\nu$ and partitions $\nu$ into $\omega_1$ many $f_{\nu}$-rainbow sets.
		
		Since $\Coll(\omega_1,<\nu)*\dP_{f_{\nu}}$ is countably closed, the following holds (see e.g. \cite[Theorem 14.3]{CummingsHandbook}):
		
		\begin{myclaim}
			There is a complete embedding $\iota\colon\Coll(\omega_1,<\nu)*\dP_{f_{\nu}}\to\Coll(\omega_1,<\kappa)$ extending the identity embedding from $\Coll(\omega_1, <\nu)$ to $\Coll(\omega_1, <\kappa)$ such that the quotient is forcing equivalent to $\Coll(\omega_1,<\kappa)$.
		\end{myclaim}
		
		In particular, in $V[G]$, there is a generic $G\restriction{\nu}*H$ for $\Coll(\omega_1,<\nu)*\dP_{f_{\nu}}$ such that $V[G]$ is an extension of $V[G\restriction{\nu}*H]$ using $\Coll(\omega_1,<\kappa)^{V[G_{\nu}*H]}$. Clearly, in $V[G\restriction{\nu}*H]$, there is a partition of $\nu$ into $\omega_1$ many $f_{\nu}$-rainbow sets and thus, in particular, there is a stationary $f_{\nu}$-rainbow set $A\subseteq\nu\cap\cof(\omega)$. Since $V[G]$ is an extension of $V[G\restriction{\nu}*H]$ using a countably closed -- hence proper -- forcing, $A\subseteq\nu\cap\cof(\omega)$ is still stationary in $\nu\cap\cof(\omega)$ in $V[G]$. In particular, since $f_{\nu}=f\uhr[\nu]^2$, $A$ is an $f$-rainbow set which is stationary in $\nu=\sup(A)$.
	\end{proof}
Theorem \ref{theorem: main2} follows easily from Theorem \ref{Theorem: Stationary Rainbow Set}.
\begin{mybem}
Although Theorem \ref{Theorem: Stationary Rainbow Set} improves significantly the large cardinal upper of the main theorem from \cite{GartiZhang}, it does not recover the the most general form of the theorem: in \cite{GartiZhang}, $\omega_2\to^* (\omega_1-st)_{<\omega_2-t-bdd}^2$ (recall Definition \ref{deftbd}) is shown consistent relative to suitable large cardinals. By Proposition \ref{proposition: typebounded}, our method falls short of dealing with a coloring that is $<\omega_1^\omega$-type-bounded. It is not too hard to check that our proof does work for colorings that are $<\omega_1^n$-type bounded for $n\in \omega$.
\end{mybem}

\begin{mybem}
By \cite[Remark 3.2]{GartiZhang}, $\omega_2\to^* (\omega_1-st)_{\omega-bdd}^2$ implies $\omega_2^V$ is a Mahlo cardinal in $L$, by a result of Jensen.
\end{mybem}

	With considerably more work, we can improve the preceding theorem to instead obtain a \emph{closed} rainbow set. This requires interleaving the club-shooting poset $\dP(A)$ into the iteration. Therefore, the quotient forcing is no longer countably closed, but merely $(\alpha,\omega_1)$-semiproper for all $\alpha\in \omega_1$ which was the reason for all of the work we had to perform to ensure that, in this situation, we can still add $f$-rainbow sets.
	
	To ensure that the continuum hypothesis holds in the extension, we use a concept due to Jensen (\cite{JensenSubcompleteL}) known as \emph{subcompletness}. The definition of subcompleteness is given in \cite[Page 31]{JensenSubcompleteL}. In \cite[Theorem 3]{JensenSubcompleteL}, it is shown that being subcomplete is preserved under $\RCS$-iterations. Lastly, in \cite[Lemma 6.3]{JensenSubcompleteL}, Jensen shows that the poset $\dP(S)$ is subcomplete. As a byproduct, the model we obtain is ``maximal" in a sense.
	
	\begin{mysen}\label{theorem: finalforcing}
		Suppose that $\kappa$ is a supercompact cardinal. There is a forcing extension where 
	\begin{itemize}
	\item $\omega_2\to^*(\omega_1-cl)_{\omega-bdd}^2$ holds and
	\item $\mathrm{MA}_{\omega_1}(\Gamma)$ holds where $\Gamma$ is the collection of forcings that are subcomplete and $(\alpha,\omega_1)$-semiproper for every $\alpha\in \omega_1$.
\end{itemize}			
		
	\end{mysen}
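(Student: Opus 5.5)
We will build an $\RCS$-iteration $(\dB_i)_{i\le\kappa}$ of length $\kappa$ by Theorem \ref{Theorem: Copy of Defining RCS Iteration}, guided by a Laver function $\ell\colon\kappa\to V_\kappa$. At even stages $i$ we force with $\Coll(\omega_1,2^{|\dB_i|})$, which is countably closed, hence subcomplete and $(\alpha,\omega_1)$-semiproper for every $\alpha$; this also gives hypothesis (2) of Theorem \ref{Theorem: Copy of RCS Iteration} and keeps $\CH$. At odd stages $i$ we consult bookkeeping. If it hands us a $\dB_i$-name for an $\omega$-bounded normal coloring $f$ on $\omega_2$ together with a stationary $S\subseteq E^{\omega_2}_\omega$, we force with $\dP_f$ and then with $\dP(A)$ for some stationary rainbow piece $A=l^{-1}[\{j\}]\cap\cof(\omega)$. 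If it hands us a name for a poset in $\Gamma$, we force with that poset. Every iterand should be subcomplete and $(\alpha,\omega_1)$-semiproper for all $\alpha<\omega_1$. Theorem \ref{Theorem: Copy of RCS Iteration} then makes all tails $(\alpha,\omega_1)$-semiproper, and Jensen's theorem makes them subcomplete. Together with the even collapses, a standard $\kappa$-cc argument for $\RCS$-iterations of length a Mahlo cardinal gives $\kappa=\omega_2$ and $\CH$ in $V[G_\kappa]$.

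The verification splits into three steps. First, the final model satisfies $\mathrm{MA}_{\omega_1}(\Gamma)$ by the usual Laver-function reflection argument, as in the proof of $\mathrm{MM}$ from a supercompact. Second, given an $\omega$-bounded $f$ in $V[G_\kappa]$, we first make it normal via Lemma \ref{Lemma: Decomposition into Normal}. By $\kappa$-cc and bookkeeping, $f$ is handled at some stage. There $\dP_f$ yields a partition into $\omega_1$ rainbow sets, one of which meets $E^{\omega_2}_\omega$ stationarily, and $\dP(A)$ then adds a continuous increasing $\omega_1$-sequence inside $A$. This sequence is a closed rainbow set of order type $\omega_1$ and remains one in the final model. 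It remains to justify that these two iterands lie in the required class. For $\dP(A)$, Jensen shows it is subcomplete. For $(\alpha,\omega_1)$-semiproperness we use Theorem \ref{Theorem: Alpha-Stationary Preservation} together with Lemma \ref{Lemma: Dagger for alpha-stationarity}, applied to $\dB_\kappa$ in the final model. To apply it at an intermediate stage, we instead invoke the proposition following Theorem \ref{Theorem: Extend alpha-tower into stationary set}: it suffices that player II wins $\Game^{\omega\alpha}_{\omega_1}(I^{\cof(\omega_1)}_{\text{bdd}})$ in $V[G_i]$.

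Third, and this is the main obstacle, $\dP_f$ needs the hypothesis of Theorem \ref{Theorem: Adding Rainbow Set} in the intermediate model $V[G_i]$. That hypothesis is a winning strategy for II in $\Game^\alpha(I_{\mathcal{X}},\mathcal{X})$ for a $<\omega_2$-complete ideal. The same games also feed the $\dP(A)$ step. I would get these from Lemma \ref{Lemma: Ideal Existence}. Choose the stage $i$ reflecting: take an inaccessible (indeed measurable) $\nu<\kappa$ such that $\ell(\nu)$ codes the needed names, and let $j\colon V\to M$ be a normal ultrapower on $\nu$, or use supercompactness to find one with $j(\ell)(\nu)$ guessing the relevant objects. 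In $M$, $j(\dB_\nu)/\dB_\nu$ is again an $\RCS$-iteration of good iterands whose first step collapses $2^{|\dB_\nu|}$ to $\omega_1$. By Theorem \ref{Theorem: Copy of RCS Iteration} it is $(\alpha,\omega_1)$-semiproper, and then Lemma \ref{Lemma: Two-Step Iteration} upgrades this to $(\alpha,\nu)$-semiproper. This gives the ideal on $\nu=\omega_2^{V[G_\nu]}$, as in Proposition \ref{Proposition: Ideal Game}. The same ideal also yields the $\omega_1$-valued game, and it concentrates on $\cof(\omega_1)$ because $\kappa$ lands in cofinality $\omega_1$.

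The delicate points are two. One must arrange that $\dP_f$ is only used at such reflecting stages $\nu$, with the coloring $f\upharpoonright[\nu]^2$ lying in $V[G_\nu]$. One must also check that $\dP_f$ itself, being countably closed, qualifies as a good iterand, so that the circular requirement that $j(\dB)/\dB$ be semiproper is met. I would attack this circularity by an induction along the iteration, defining the iteration so that $\dP_f$ is inserted only when the ideal-game hypothesis provably holds at that stage. This is the step I expect to need the most care.
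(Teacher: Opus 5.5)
There is a genuine gap, and it sits in the step you flagged: handling colorings at intermediate stages $\nu<\kappa$.

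\textbf{Handling at a stage.} A coloring $f$ of $[\kappa]^2$ in $V[G_\kappa]$ has size $\kappa$, so $\kappa$-cc does not put $f$ at any stage. At best $f\uhr[\nu]^2$ is guessed at some $\nu$.

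\textbf{Semiproperness at stage $\nu$.} To insert $\dP_{f}*\dP(\dot A)$ at stage $\nu$, you must know it is $(\alpha,\omega_1)$-semiproper in $V[G_\nu]$, and you do not.
\begin{itemize}
\item The proposition after Theorem \ref{Theorem: Extend alpha-tower into stationary set} gives this for $\dP(S)$ only in a model where II wins $\Game_{\omega_1}^{\omega\alpha}(I_{\text{bdd}}^{\cof(\omega_1)})$.
\item But $\dP(A)$ is forced over $V[G_\nu][H]$ with $H$ $\dP_f$-generic, and $A$ does not exist in $V[G_\nu]$. So the hypothesis you check in $V[G_\nu]$ is the wrong one.
\item You would need the game (essentially $\alpha$-$\mathrm{SCC}^{\cof}$) to survive forcing with $\dP_f$. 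Nothing in the paper gives this. The authors leave open whether $<\omega_1$-$\mathrm{SCC}^{\cof}$ even suffices for $\dQ_f$ to be $(\alpha,\omega_1)$-semiproper (second question in Section \ref{section: questions}).
\item Lemma \ref{Lemma: Ideal Existence} at a measurable $\nu$ could at most give the $\omega_2$-ideal game in $V[G_\nu]$, not after $\dP_f$. Lifting a measure ultrapower through $\dP_f$ has no evident master condition, since conditions are countable.
\item Lemma \ref{Lemma: Dagger for alpha-stationarity} is not available at $\nu$ either. It needs $\delta$-supercompact embeddings with a collapse of $\delta$ at stage $\nu$ of the image iteration, and you have neither below $\kappa$.
\end{itemize}

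\textbf{The missing idea.} No coloring ever has to be handled at an intermediate stage. The paper lets the iteration use $\ell(i)$ only if it is forced to be subcomplete and $(\alpha,\omega_1)$-semiproper for all $\alpha$, and collapses otherwise. This removes your circularity, because goodness of the iterands of $j(\overline{\dB})$ in $M$ is built into the definition. Everything else is done in the final model $V[G_\kappa]$:
\begin{itemize}
\item Take $j$ with $j(\ell)(\kappa)=\Coll(\omega_1,\kappa)$. Lemma \ref{Lemma: Two-Step Iteration} and Lemma \ref{Lemma: Ideal Existence} give the ideal-game hypothesis in $V[G_\kappa]$, and $\CH$ holds since subcomplete forcing adds no reals. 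So Theorem \ref{Theorem: Adding Rainbow Set} yields $\dP_f$ in $V[G_\kappa]$.
\item Taking $j(\ell)(\kappa)=\Coll(\omega_1,\delta)$ supplies hypothesis (2) of Lemma \ref{Lemma: Dagger for alpha-stationarity}. With Theorem \ref{Theorem: Alpha-Stationary Preservation}, this shows $\dQ_f=\dP_f*\dP(\dot A)$ is $(\alpha,\omega_1)$-semiproper in $V[G_\kappa]$.
\item $\dQ_f$ is also subcomplete, so $\dQ_f\in\Gamma$ there.
\item Now choose a $|\dQ_f|$-supercompact $j$ with $j(\ell)(\kappa)=\dQ_f$, so stage $\kappa$ of $j(\overline{\dB})$ is $\dQ_f$. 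Lift to $j^+\colon V[G_\kappa]\to M[H]$. Then $M[H]$ has a closed $f$-rainbow set of type $\omega_1$, which is $j^+(f)$-rainbow because $j^+(f)\uhr[\kappa]^2=f$. Elementarity pulls such a set back to $V[G_\kappa]$.
\end{itemize}
So the rainbow clause is one more instance of the Laver-function reflection you already use for $\mathrm{MA}_{\omega_1}(\Gamma)$, applied to the single poset $\dQ_f$. This avoids both the circularity and any need for game hypotheses after $\dP_f$ at stages below $\kappa$.
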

	\begin{proof}
		Let $l\colon\kappa\to V_{\kappa}$ be a Laver function. By Theorem \ref{Theorem: Copy of Defining RCS Iteration}, let $\overline{\dB}:=(\dB_i)_{i<\kappa+1}$ be an $\RCS$ iteration such that the following holds:
		\begin{enumerate}
			\item If $i<\kappa$ is inaccessible, $|\dB_j|<i$ for every $j<i$ and $l(i)$ is a $\dB_i$-name for a poset which is subcomplete and $(\alpha,\omega_1)$-semiproper  for every $\alpha<\omega_1$, let $\dB_{i+1}:=\dB_i*l(i)$;
			\item otherwise, let $\dB_{i+1}:=\dB_i*\Coll(\omega_1,|2^{\dB_i}|)$.
		\end{enumerate}
		
		By Theorem \ref{Theorem: Copy of RCS Iteration}, $\dB_{\kappa}$, which is the direct limit of $(\dB_i)_{i<\kappa}$, is $(\alpha,\omega_1)$-semiproper for every $\alpha<\omega_1$ and in particular preserves $\omega_1$. Thus, $\dB_{\kappa}$ forces $\kappa=\omega_2$.
		
		\begin{myclaim}
			In $V[G]$, there exists a $<\kappa$-complete ideal $I$ on $\kappa$ such that player II wins $\Game^{\alpha}(I)$ for every $\alpha<\omega_1$.
		\end{myclaim}
		
		\begin{proof}
			Let $j\colon V\to M$ be a measurable embedding with critical point $\kappa$ such that $j(l)(\kappa)=\Coll(\omega_1,\kappa)$. Hence, $j(\overline{\dB})_{\kappa+1}=\dB_{\kappa}*\Coll(\omega_1,\kappa)$ and it forces that \linebreak $j(\overline{\dB})_{j(\kappa)+1}/j(\overline{\dB})_{\kappa+1}$ is $(\alpha,\omega_1)$-semiproper for every $\alpha<\omega_1$. By Lemma \ref{Lemma: Two-Step Iteration}, it follows that $\dB_{\kappa}$ forces  that $j(\dB_{\kappa})/\dB_{\kappa}$ is $(\alpha,\kappa)$-semiproper for any $\alpha\in \omega_1$. Now the claim follows from Lemma \ref{Lemma: Ideal Existence}.
		\end{proof}
		
		In $V[G]$, let $f$ be an $\omega$-bounded coloring on $[\kappa]^2$. It follows from work of Jensen (see \cite{JensenSubcompleteL}) that $\dB_{\kappa}$ does not add reals. Thus, by Theorem \ref{Theorem: Adding Rainbow Set}, in $V[G]$, there is a countably closed poset $\dP_f$ which preserves $\kappa$ and adds a partition of $\kappa$ into $\omega_1$ many $f$-rainbow sets. In particular, $\dP_f$ adds a stationary $f$-rainbow set $A\subseteq\kappa\cap\cof(\omega)$. Let $\dot{A}$ be a $\dP_f$-name for $A$ and let $\dQ_f:=\dP_f*\dP(\dot{A})$. Then $\dQ_f$ preserves stationary subsets of $P_{\omega_1}^{\alpha}(\omega_1)$ for every $\alpha\in \omega_1$ by Theorem \ref{Theorem: Alpha-Stationary Preservation}. By Lemma \ref{Lemma: Dagger for alpha-stationarity}, $\dQ_f$ is $(\alpha,\omega_1)$-semiproper for every $\alpha<\omega_1$.
		
		Now let $j\colon V\to M$ be a $|\dQ_f|$-supercompact embedding such that $j(l)(\kappa)=\dQ_f$. It follows that $j(\overline{\dB})_{\kappa+1}=\dB_{\kappa}*\dQ_f$. Furthermore, $j$ lifts to $j^+\colon V[G]\to M[H]$, where $H$ is $j(\overline{\dB})$-generic. Thus, in $M[H]$, there is a closed $f$-rainbow set $c\subseteq\kappa$ with ordertype $\omega_1$. 
Note that $j^+(f)\restriction [\kappa]^2 =f$.		 Thus, in $M[H]$, there is a closed $j(f)$-rainbow set of order type $\omega_1$. By the  elementarity of $j^+$, in $V[G]$, there is a closed $f$-rainbow set $d\subseteq\kappa$ with ordertype $\omega_1$. This finishes the proof.
	\end{proof}
	
	Theorem \ref{theorem: main} follows easily from Theorem \ref{theorem: finalforcing}.
	\section{Open Questions} \label{section: questions}
	
	We close with a few open questions.

	\begin{myque}
		What is the consistency strength of the assertion that, whenever $c\colon[\omega_2]^2\to\omega_2$ is $\omega$-bounded, there is a $c$-rainbow set of ordertype $\omega_1$ stationary (or closed) in its supremum?
	\end{myque}

	In our construction, the main issue raising the consistency strength to some degree of supercompactness is the reliance on Lemma \ref{Lemma: Dagger for alpha-stationarity}. This could be vastly improved by a positive answer to the following:
	
	\begin{myque}
	For any $\omega$-bounded $f: [\omega_2]^2\to \omega_2$, does $<\omega_1$-$\mathrm{SCC}^{\cof}$ imply that $\dQ_f$ is $(\alpha,\omega_1)$-semiproper for all $\alpha<\omega_1$? Here $\dQ_f$ is the forcing defined in the proof of Theorem \ref{theorem: finalforcing}. More specifically, $\dQ_f = \dP_f * \dP(\dot{A})$ where $\dot{A}$ is a $\dP_f$-name for a $f$-rainbow stationary subset of $E^{\omega_2}_\omega$.
	\end{myque}	
	
	It is not hard to see that the methods developed in Section \ref{Section: Adding Rainbow Sets} can be straightforwardly adapted to larger successors of regular cardinals. In particular, as in Theorem \ref{Theorem: Stationary Rainbow Set}, for every regular cardinal $\mu$, it is consistent that for every $\mu$-bounded coloring $f\colon[\mu^{++}]^2\to\mu^{++}$, there is a stationary $f$-rainbow set of ordertype $\mu^+$. However, due to the reliance on $(\alpha,\omega_1)$-semiproperness, the same cannot be said for \emph{closed} rainbow sets. Thus, we ask:
	
	\begin{myque}
		Is it consistent that whenever $f\colon[\omega_3]^2\to\omega_3$ is $2$-bounded, there is a closed $f$-rainbow set of ordertype $\omega_2$?
	\end{myque}
	
	As stated in Section \ref{section: limitations}, our techniques are also unable to deal with type-bounded colorings, since we are provably unable to add rainbow sets without collapsing $\omega_2$. This begs the following question:
	
	\begin{myque}
		Is it consistent that for any $<\omega_2$-type bounded coloring $c: [\omega_2]^2\to \omega_2$, there exists a \emph{closed} $c$-rainbow $A\subseteq \omega_2$ of order type $\omega_1$?
	\end{myque}	
	
	Lastly, we need to iterate forcings that are $(\alpha,\omega_1)$-semiproper for all $\alpha\in \omega_1$ in order to obtain a winning strategy for player II in $\Game^{\alpha}(I_{\text{bdd}}^{\cof(\omega_1)})$ in our final model. In particular, the following is unclear:
	
	\begin{myque}
		Does Martin's Maximum decide the truth of $\omega_2\to^*(\omega_1-cl)_{\omega\text{-bdd}}^2$?
	\end{myque}

	\section{Acknowledgment}
	The second author wants to thank Steve Jackson for illuminating conversations on the family of games $\Game^\alpha(I)$.
	
	\printbibliography
	
\end{document}